\definecolor{linkblue}{RGB}{1,1,190}
\definecolor{citered}{RGB}{190,1,1}
\definecolor{recursionblue}{RGB}{70,130,180}
\newcounter{Halgorithmic}
\renewcommand{theHALG@line}{\arabic{Halgorithmic}.\arabic{ALG@line}}
\setlist[enumerate,1]{label=\textup{(\arabic*)}, ref=\textup{(}\arabic*\textup{)},
  itemsep=0.5em plus 0.15em minus 0.05em,
  topsep=0.5em plus 0.15em minus 0.05em,
  leftmargin=0.75cm}
\setlist[enumerate,2]{label=\textup{(\roman*)}, ref=\textup{(}\roman*\textup{)},
  itemsep=0.5em plus 0.15em minus 0.05em,
  topsep=0.5em plus 0.15em minus 0.05em}
\setlist[itemize, 1]{itemsep=0.5em plus 0.15em minus 0.05em,
  topsep=0.5em plus 0.15em minus 0.05em, leftmargin=0.75cm}
\newlist{equivenumerate}{enumerate}{1}
\setlist[equivenumerate,1]{%
  label=\textup{(\alph*)},
  ref=\textup{(}\alph*\textup{)},
  itemsep=0.5em plus 0.15em minus 0.05em,
  topsep=0.5em plus 0.15em minus 0.05em,
  leftmargin=0.75cm
}
\newlist{propenumerate}{enumerate}{1}
\setlist[propenumerate,1]{%
  label=\textup{(\roman*)},
  ref=\textup{(}\roman*\textup{)},
  itemsep=0.5em plus 0.15em minus 0.05em,
  topsep=0.5em plus 0.15em minus 0.05em,
  leftmargin=0.75cm
}
\newlist{proofenumerate}{enumerate}{1}
\setlist[proofenumerate,1]{%
  itemsep=0.5em plus 0.15em minus 0.05em,
  topsep=0.5em plus 0.15em minus 0.05em,
  wide, labelindent=0pt
}
\xpatchcmd{\paragraph}{\normalfont}{{\normalfont\bfseries}}{}{}
\newcommand{\defit}[1]{\textsf{#1}}
\newcommand{\bF}{\mathbb F}
\newcommand{\bZ}{\mathbb Z}
\newcommand{\bP}{\mathbb P}
\newcommand{\bQ}{\mathbb Q}
\newcommand{\bR}{\mathbb R}
\newcommand{\bC}{\mathbb C}
\newcommand{\cF}{\mathcal F}
\newcommand{\htop}{\mathfrak X}
\newcommand{\vprod}{\sideset{}{\mkern -2mu\raisebox{0.2em}{$\vphantom{\prod}^v$}} \prod}
\newcommand{\val}{\mathsf{v}}
\DeclareMathOperator{\dv}{div} 
\newcommand{\algc}[1]{\overline{#1}}
\renewcommand{\vec}[1]{\bm{#1}}
\DeclareMathOperator{\chr}{char}
\DeclareMathOperator{\Pic}{Pic}
\DeclareMathOperator{\Cl}{Cl}
\DeclareMathOperator{\Div}{Div}
\DeclareMathOperator{\Princ}{Princ}
\DeclareMathOperator{\Spec}{Spec}
\DeclareMathOperator{\height}{ht}
\DeclareMathOperator{\Coker}{Coker}
\DeclareMathOperator{\im}{im}
\DeclareMathOperator{\Partner}{Partner}
\DeclareMathOperator{\Trop}{Trop}
\DeclarePairedDelimiter{\card}{\lvert}{\rvert}
\title[Factoriality and Class Groups of Upper Cluster Algebras and FLIRs]{Factoriality and Class Groups of Upper Cluster Algebras and Finite Laurent Intersection Rings: A Computational Approach}
\author{Mara Pompili}
\address{University of Graz, Department of Mathematics and Scientific Computing, NAWI Graz, Heinrichstrasse 36, 8010 Graz, Austria}
\email{mara.pompili@uni-graz.at}
\author{Daniel Smertnig}
\address{Faculty of Mathematics and Physics (FMF)\\
  University of Ljubljana
  and Institute of Mathematics, Physics and Mechanics (IMFM)\\
  Jadranska ulica 21\\
  1000 Ljubljana, Slovenia}
\email{daniel.smertnig@fmf.uni-lj.si}
\subjclass[2020]{Primary 13F60; Secondary 13C20, 13F05, 13F15}
\keywords{Cluster algebras, computational algebra, divisor class groups, factoriality, factorization theory, Krull domains}
\begin{document}

\begin{abstract}
  \begin{singlespace}
    We study factoriality and the class groups of locally acyclic cluster algebras.
    To do so, we introduce a new class of rings called finite Laurent intersection rings (FLIRs), which includes locally acyclic cluster algebras, full-rank upper cluster algebras, and certain generalized upper cluster algebras and Laurent phenomenon algebras.
    Our main results are algorithms to compute the class group of an explicit FLIR, to determine factoriality, and to compute all factorizations of a given element.
    The algorithms are based on multivariate polynomial factorizations, avoiding computationally expensive Gröbner basis calculations.
  \end{singlespace}
\end{abstract}

\date{}

\maketitle

\vspace*{-1cm}
\setcounter{tocdepth}{1}
\begin{center}
\parbox{0.7\textwidth}
{\small\tableofcontents}
\end{center}
\setcounter{tocdepth}{2}

\section{Introduction}

A basic ring-theoretic question about a cluster algebra or upper cluster algebra is whether it is factorial, that is, whether every nonzero element admits a unique factorization into atoms (i.e., irreducible elements), up to order and units.
Upper cluster algebras are often noetherian and integrally closed, or are at least Krull domains, in which case they are factorial if and only if their (divisor) class group is trivial.
The more general question is therefore to determine the class group.
This gives a complete picture of the factorization theory even in the non-factorial case.

Geiss, Leclerc, and Schröer first studied the factoriality of cluster algebras, and gave sufficient conditions \cite{GLS13}.
Garcia Elsener, Lampe, and the second author characterized the factoriality of cluster algebras with acyclic initial seed and proved an elementary formula for the class group in terms of the initial exchange matrix \cite{GELS19}.
The determination of the class groups of finite-type and affine cluster algebras follows as an easy consequence \cite[Corollaries 5.16 and 5.17]{GELS19}.

Acyclic cluster algebras coincide with their upper cluster algebras \cite{BFZ05,M14}.
In general, upper cluster algebras are the more natural objects from a ring-theoretic and geometric perspective.
For instance, they arise as the ring of global sections on $\mathcal A$-cluster varieties \cite[Remark 2.9]{GHK15}.
Shifting the focus to upper cluster algebras, Cao, Keller, and Qin \cite{CKQ22} proved that a full-rank upper cluster algebra is factorial if and only if its initial exchange polynomials are irreducible.
The first author extended this to the determination of the class group \cite{Pom25a}, and obtained results for generalized cluster algebras and Laurent phenomenon algebras \cite[\S4 and \S5]{Pom25b}.

These algebras have in common that, starting from an initial seed, it suffices to consider a few particular mutations to discover the complete height-one spectrum of the algebra: in the case of full-rank upper cluster algebras, the Starfish Lemma \cite[Corollary 1.9]{BFZ05} shows that it is sufficient to mutate once in each direction from the initial seed.
For acyclic cluster algebras, a sequence of well-chosen freezings and mutations achieves a similarly explicit understanding of the height-one spectrum \cite[\S4]{GELS19} \cite{M14}.

Locally acyclic cluster algebras \cite{M13} form a large class of cluster algebras which coincide with their upper cluster algebras \cite{M14}.
They include acyclic cluster algebras, cluster algebras arising from marked surfaces with at least two marked points on each boundary component \cite[Theorem 10.6]{M13}, coordinate rings of Grassmannians \cite{MS16}, and more generally, of open positroid and braid varieties \cite{galashin-lam23,CGGSS25}.

If $A$ is a locally acyclic cluster algebra, then $\Spec(A)$ still has a cluster cover by finitely many isolated cluster localizations.
However, if the exchange matrix does not have full rank, the cover could involve a complex sequence of mutations.
It would be too much to expect a simple formula for the rank of the class group in terms of the initial seed.

We overcome this problem in the present paper by taking an algorithmic approach: instead of an explicit formula, we give an algorithm that, given as input a locally acyclic cluster algebra, computes its class group.
This is comparable to the situation of rings of integers in number fields, where no general formula for the class group is known, but the class group is computable for any given ring of integers.

Once the class group is computable, it is then trivially decidable whether the algebra is factorial.
But under this computational point of view an additional natural algorithmic question arises: given an element of the algebra, compute all its factorization into atoms.
We show that this is also possible.

To achieve such computational results, the ground ring itself needs to be computable in a suitable sense, and a cluster cover by isolated (or acyclic) cluster localizations must be explicitly known.
The latter condition, while not automatic, is often achievable, because many cluster algebras are proven to be locally acyclic by the Banff algorithm \cite[\S5]{M13}, which explicitly constructs such a cover.
For instance, this is the case for cluster algebras of surfaces with at least two marked points on each boundary component.

A simplified version of our main result is the following.
An exchange matrix satisfies the Banff property if the Banff algorithm successfully terminates when started with that matrix.
Computable structures are discussed in \cref{s:computable-domains}.

\begin{theorem} \label{t:main-banff}
  Let $D$ be a computable Krull domain \textup(such as $\bQ$, $\algc{\bQ}$, a finite field, or $\bZ$\textup), and $\bP$ a computable semifield \textup(such as a tropical semifield\textup). 
  Let $(\vec x, \vec y, B)$ be a seed with the exchange matrix $B$ satisfying the Banff property, and let $A=A(\vec x, \vec y, B)$ be the corresponding cluster algebra over $D$ with coefficients in $\bP$.
  Then there exist algorithms to:
  \begin{enumerate}
  \item compute the class group $\Cl(A)$;
  \item decide whether $A$ is factorial; and
  \item given $0\ne a \in A$, compute all factorizations of $a$ into atoms of $A$.
  \end{enumerate}
\end{theorem}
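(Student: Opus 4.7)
The plan is to reduce the statement to the corresponding algorithmic results for explicit FLIRs, which by assumption have already been established earlier in the paper, via the cluster cover produced by the Banff algorithm.

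First, I would run the Banff algorithm on input $B$. By hypothesis it terminates and returns a finite family of seeds $(\vec x_i, \vec y_i, B_i)$, each obtained from $(\vec x, \vec y, B)$ by a recorded sequence of mutations and freezings, together with a collection of cluster variables $\Sigma_i$ such that $\Spec A$ is covered by the isolated cluster localizations $A[\Sigma_i^{-1}]$ and each $B_i$ is acyclic. Local acyclicity then gives the intersection presentation $A = \bigcap_i A[\Sigma_i^{-1}]$ inside the ambient field of Laurent polynomials. Because each $B_i$ is acyclic, $A[\Sigma_i^{-1}]$ coincides with its upper cluster algebra \cite{BFZ05,M14}, and by the Starfish Lemma it is itself a finite intersection of Laurent polynomial rings over $D[\bP]$ indexed by the one-step mutations away from $(\vec x_i, \vec y_i, B_i)$. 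Nesting these intersections expresses $A$ as an explicit finite intersection of Laurent polynomial rings over $D[\bP]$, i.e.\ as an explicit FLIR.

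Next I would verify that this presentation is \emph{computable} in the sense required by the paper's FLIR algorithms. Since $D$ is a computable Krull domain and $\bP$ is a computable semifield, the group ring $D[\bP]$ and all the Laurent polynomial rings appearing above are computable, and the inclusion maps between them (being monomial changes of variables determined by the mutation data recorded by the Banff algorithm) can be written down explicitly. With this data in hand, the algorithms established earlier for explicit FLIRs directly yield (1) the class group $\Cl(A)$, (2) a decision procedure for factoriality (triviality of $\Cl(A)$, since an explicit FLIR is a Krull domain), and (3), given $0 \ne a \in A$, all factorizations of $a$ into atoms.

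The main obstacle is the bookkeeping in the middle step: one has to show that the combinatorial output of the Banff algorithm can be translated into genuine algebraic data (generating cluster variables as elements of the ambient Laurent ring, explicit embeddings of each $A[\Sigma_i^{-1}]$ into the various Laurent polynomial rings coming from its one-step neighbours, and the resulting intersection diagram) in a uniformly computable way starting from only $(\vec x, \vec y, B)$ and the computable structures on $D$ and $\bP$. Once this translation is carried out, the theorem follows by invoking the FLIR algorithms as a black box; no further ring-theoretic arguments are needed.
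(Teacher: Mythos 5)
Your overall strategy (run the Banff algorithm, turn the resulting cover by isolated cluster localizations into an explicit FLIR presentation of $A$, and then invoke the FLIR machinery of \cref{t:flir-computable} and \cref{t:compute-factorizations} as a black box) is exactly the route the paper takes via \cref{t:banff-explicit-FLIR}. However, your middle step has a genuine gap: you pass from each isolated (or acyclic) localization $A[\Sigma_i^{-1}]$ to an intersection of Laurent rings by appealing to the Starfish Lemma \cite[Corollary 1.9]{BFZ05}, indexed by the one-step mutations of the seed $(\vec x_i,\vec y_i,B_i)$. The Starfish Lemma requires a (totally) coprime, in particular full-rank, seed, and the isolated seeds produced by the Banff recursion have \emph{zero} exchange matrix, so this hypothesis generally fails; see \cref{r:krull-upper-cluster-algebras}(2). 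Concretely, for $D=\bZ$ with trivial coefficients and two isolated vertices, the exchange polynomials are $f_1=f_2=2$ (not coprime over $\bZ$), and the element $2/(x_1x_2)$ lies in all three starfish charts $\bZ[x_1^{\pm1},x_2^{\pm1}]$, $\bZ[(2/x_1)^{\pm1},x_2^{\pm1}]$, $\bZ[x_1^{\pm1},(2/x_2)^{\pm1}]$ but not in $A=\bZ[x_1,2/x_1,x_2,2/x_2]$ (every monomial of $A$ supported on $x_1^{-1}x_2^{-1}$ has coefficient divisible by $4$). So the starfish intersection is strictly larger than the algebra, and your claimed FLIR presentation of $A$ would be wrong in general; the same failure occurs over fields whenever two exchange polynomials share a non-unit factor (e.g.\ two isolated vertices with equal nontrivial coefficients).

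The paper avoids this by treating each isolated localization as an \emph{iterated} FLIR: $A[\Sigma_i^{-1}]=D\bP_i[x_1,f_1/x_1]\cdots[x_m,f_m/x_m]$, which by \cref{c:iterated-flir} and \cref{c:lir-over-laurent} is an explicit $D\bP$-FLIR whose charts are all $2^m$ tuples with $y_j\in\{x_j,\,f_j/x_j\}$ (\cref{l:isolated-algebra}), not the $m+1$ starfish charts. Replacing your Starfish step by this lemma repairs the argument, and the remainder of your proposal then matches the paper's proof. Two smaller points: the chart transitions are Laurent-polynomial substitutions, not monomial changes of variables (this is exactly what ``explicit FLIR'' in \cref{d:explicit-flir} records); and to run the class-group and factorization algorithms one needs $D$ to have a splitting algorithm (as in the corollary preceding \cref{t:main-banff}), which holds for the listed ground rings but should be stated.
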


In fact, we first introduce a new class of rings, called \emph{finite Laurent intersection rings} (\emph{FLIRs}) in \cref{d:lir,d:flir}, which captures the relevant key properties of upper cluster algebras in height one.
FLIRs have a very simple definition while being versatile enough to include various generalizations of upper cluster algebras, such as interesting special cases of generalized cluster algebras and Laurent phenomenon algebras.
Since FLIRs over ground rings that are Krull domains are themselves Krull domains (\cref{p:lir-krull}), they provide a natural setting to study the factorization theory of such algebras in unified way.

Unlike locally acyclic algebras, the more general class of FLIRs may have \emph{deep points} in the sense of \cite{CGSS24,BM25}.
However, these deep points have height at least two, and hence do not impair the study of factorizations (\cref{r:krull-upper-cluster-algebras}).

As in \cite{GELS19,Pom25a}, our methods are based on tools from multiplicative ideal theory, in particular, Krull domains and Nagata's Theorem (see \cref{ssec:krull} below).
The recently introduced valuation pairings \cite{CKQ22} can be understood in terms of discrete valuations in this setting \cite[\S5]{Pom25a}.
We introduce \emph{computational Krull domains} in \cref{d:computable-domain}, giving a rigorous definition of what it means to be able to perform the basic operations in a Krull domain algorithmically.
Aside from the arithmetic operations, this includes computations with divisors and class groups.
Of course, only some ground rings, such as $\bZ$, $\bQ$, $\algc{\bQ}$, number fields and their rings of integers, and finite fields are really of interest. 
However, the setting of computable Krull domains allows us to deal with various ground rings cleanly and uniformly.

In the setting of computational Krull domains, our main results are that all factorizations of an element are computable (\cref{t:compute-factorizations}) and that explicit FLIRs over computable Krull domains, with a splitting algorithm for the field of fractions, are again computable Krull domains (\cref{t:flir-computable}).

After this groundwork, it is easy to observe that the Banff algorithm shows that Banff cluster algebras are explicit FLIRs (\cref{t:banff-explicit-FLIR}) and to thereby obtain \cref{t:main-banff}.

The framework of FLIRs also straightforwardly applies to recover factorization results on full-rank upper cluster algebras \cite{CKQ22,Pom25a} and acyclic cluster algebras \cite{GELS19}.
We do not pursue this for full-rank upper cluster algebras, as it does not offer much over \cite{Pom25a}.
For acyclic cluster algebras, we give a simpler and more conceptual proof of the most technical step of \cite{GELS19} and answer \cite[Question 4.8]{GELS19} in \cref{ssec:acyclic-primes}.

A key feature of our algorithms is that they are reasonably practical: they completely avoid the use of Gröbner bases, which can quickly become impractical as the number of variables or the size of the cluster cover grow.
Instead, the algorithms are based on the Laurent phenomenon and factoring multivariate polynomials, for which practical algorithms exist \cite{kaltofen92,kaltofen03}.
In fact, since Laurent polynomial rings are special cases of upper cluster algebras, the complexity of factoring multivariate polynomials also provides a natural lower bound for the complexity of factoring elements in upper cluster algebras.
We implemented a prototype version of our algorithms using the computer algebra system SageMath \cite{sagemath}.

In \cref{ssec:fin-pres}, we also describe an alternative approach, in which the Banff algorithm is adapted to compute finite presentations of Banff cluster algebras (using Gröbner bases), and the class group is determined using primary decompositions.
The computation of presentations of upper cluster algebras was first considered by Matherne and Muller \cite{MM15}.
In \cref{ssec:examples} we compare the polynomial factorization approach with the Gröbner basis approach on a family of examples.
The results strongly indicate that the asymptotic performance advantage of our Laurent-phenomenon-based algorithms indeed translates into a practical performance advantage that greatly extends the class of cluster algebras for which such computations are even feasible (\cref{tab:comparison}).

\smallskip
\paragraph{Acknowledgements} Pompili was partially supported by the Austrian Science Fund (FWF), project 10.55776/DOC-183-N. Smertnig was supported by the Slovenian Research and Innovation Agency (ARIS) program P1-0288 and grant J1-60025.

\section{Preliminaries} \label{s:prelim}

By $[1,n]$ we denote the discrete interval $\{\, a \in \bZ : 1 \le a \le n\,\}$.
A \defit{domain} $D$ is a commutative ring with unity in which zero is the only zero-divisor.
By $D^\times$ we denote its group of units, and by $\htop(D)$ the set of height-one prime ideals.

It will be convenient to use multi-index notation: given a tuple of variables $\vec x = (x_1, \ldots, x_n)$, we denote by $D[\vec x]$ the polynomial ring $D[x_1, \ldots, x_n]$ and by $D[\vec x^{\pm 1}]$ the corresponding Laurent polynomial ring.
Similarly, if $K$ is a field, then $K(\vec x)$ denotes the field of  rational functions.
If $\vec m=(m_1,\dots,m_n) \in \bZ^n$, we write $\vec x^{\vec m} \coloneqq x_1^{m_1} \cdots x_n^{m_n}$.

\subsection{Factorization}

We recall some basic notions from factorization theory; a standard reference is \cite{GH06}, recent surveys are \cite{smertnig16,geroldinger-zhong20,anderson-gotti21,coykendall-gotti25,geroldinger-kim-loper26}, for a category-theoretical point of view see \cite{campanini-cossu-tringali25}.
Recent generalizations of these notions are studied in \cite{anderson-frazier11,tringali22,cossu-tringali24,cossu25}.
Let $D$ be a domain.

A nonzero nonunit $u\in D$ is an \defit{atom} (equivalently, an \defit{irreducible element}) if $u = ab$ with $a$,~$b \in D$ implies $a\in D^\times$ or $b\in D^\times$.
A nonzero nonunit $p\in D$ is a \defit{prime element} if $p \mid ab$ implies $p \mid a$ or $p\mid b$ for all $a$,~$b\in D$.
Every prime element is an atom, but in general not conversely.

The domain $D$ is \defit{atomic} if every nonzero nonunit is a product of atoms; it is \defit{factorial} if every nonzero element is a product of prime elements. 
The following is standard.
\begin{theorem}
  For an atomic domain, the following statements are equivalent.
  \begin{equivenumerate}
    \item The domain $D$ is factorial.
    \item Every atom of $D$ is prime.
    \item Any two factorizations of the same element into atoms differ only in the order and in the associates of the atoms.
  \end{equivenumerate}
\end{theorem}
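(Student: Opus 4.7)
The plan is to establish the cycle $\text{(a)} \Rightarrow \text{(b)} \Rightarrow \text{(c)} \Rightarrow \text{(a)}$. All three implications are standard factorization-theoretic arguments; I anticipate no deep obstacle, only careful unit bookkeeping in the last step.

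For $\text{(a)} \Rightarrow \text{(b)}$, I would take an atom $u$ and use factoriality to write $u = p_1 \cdots p_k$ as a product of primes. Since $u$ is a nonunit, $k \ge 1$, and since $u$ is irreducible while each $p_i$ is a nonunit, any $k \ge 2$ would express $u$ as a product of two nonunits (using that a product of nonunits in a domain is again a nonunit); hence $k = 1$ and $u$ is itself prime. For $\text{(b)} \Rightarrow \text{(c)}$, I would argue by induction on the length $m$ of a factorization $u_1 \cdots u_m = v_1 \cdots v_n$ into atoms. Since $u_1$ is prime it must divide some $v_j$, and because $v_j$ is an atom, $u_1$ and $v_j$ are associates; cancelling $u_1$ and absorbing the resulting unit into another factor reduces to a shorter instance. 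The base case $m = 0$ is immediate because a nonempty product of nonunits is a nonunit and cannot equal $1$.

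For $\text{(c)} \Rightarrow \text{(a)}$, atomicity reduces the task to showing every atom is prime. Given an atom $u$ with $u \mid ab$ and $a, b$ nonzero, the cases in which $a$ or $b$ is a unit are immediate; otherwise I would write $ab = uc$ and decompose $a$ and $b$ into atoms $a_1 \cdots a_k$ and $b_1 \cdots b_l$. The key subtlety is showing $c$ is itself a nonzero nonunit: $c = 0$ contradicts $a, b \ne 0$ in a domain, and if $c$ were a unit then $ab = uc$ would exhibit $ab$ as an atom (a unit times an atom), contradicting that $ab$ is a product of two nonunits. With $c$ a nonzero nonunit I can factor it into atoms $c_1 \cdots c_r$ and apply (c) to the identity $u c_1 \cdots c_r = a_1 \cdots a_k b_1 \cdots b_l$, matching $u$ with an associate of some $a_i$ or $b_j$ and concluding $u \mid a$ or $u \mid b$. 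This unit-handling in $\text{(c)} \Rightarrow \text{(a)}$ is the only spot where care is required; everything else is direct from the definitions.
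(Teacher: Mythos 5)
Your proof is correct: the cycle (a)$\Rightarrow$(b)$\Rightarrow$(c)$\Rightarrow$(a) is exactly the standard argument (Euclid-style cancellation for (b)$\Rightarrow$(c), and the careful treatment of the cofactor $c$ in (c)$\Rightarrow$(a) is handled properly). The paper itself gives no proof --- it cites the statement as standard --- and your argument coincides with the usual textbook proof it is implicitly invoking, so there is nothing further to compare.
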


Here, two elements $a$,~$b \in D$ are \defit{associates} if $aD=bD$, equivalently, if there exists a unit $\varepsilon \in D^\times$ such that $a=\varepsilon b$.
Being atomic is in general a much weaker property than being factorial.

The domains under consideration in this paper will always be \defit{finite factorization domains \textup(FFDs)}: each nonzero element has only finitely many distinct factorizations into atoms, up to order and associativity of the factors \cite[Chapter 1.5]{GH06} \cite{anderson-gotti21}.
In such domains, it makes sense to ask to compute all factorizations of a given element.

\subsection{Krull Domains and Class Groups} \label{ssec:krull}

Krull domains are a central class of rings studied in multiplicative ideal theory and factorization theory.
For our purposes, the classical treatment in \cite{FOSSUM} is sufficient.
Other classical references are \cite{gilmer92} \cite[Chapter~VII]{bourbaki-CA72}.
Modern treatments can be found in \cite{GH06,elliott19,wang-kim24,halter-koch25}.

Krull domains have many equivalent characterizations, the following will be most useful to us.
Let $D$ be a domain with field of fractions $K$.

\begin{definition} \label{d:krull-domain}
  The domain $D$ is a \defit{Krull domain} if there exists a family of discrete valuation rings \textup(DVRs\textup) $(V_i)_{i \in I}$ with $K$ as field of fractions such that
  \begin{propenumerate}
    \item $D = \bigcap_{i \in I} V_i$, and
    \item for every $0 \ne a \in D$, the set $\{\, i \in I : a \notin V_i^\times \,\}$ is finite.
  \end{propenumerate}
\end{definition} 

The second property is known as the \defit{finite character property}.

If $P \in \htop(D)$ is a height-one prime ideal of a Krull domain $D$, then the localization $D_{P}$ is a DVR \cite[Proposition 1.9]{FOSSUM}, and
\[
D = \bigcap_{P \in \htop(D)} D_P.
\]

Subintersections of Krull domains, finite intersections of Krull domains with the same field of fractions, and localizations of Krull domains are easily seen to be again Krull domains \cite[Chapter~1]{FOSSUM}.
Fields and factorial domains are Krull domains.

\subsubsection{\textup(Laurent\textup) Polynomial Rings} \label{sssec:div-poly}
A polynomial ring $D[\vec x]$ is a Krull domain if and only if the base ring $D$ is a Krull domain, and the same is true for $D[\vec x^{\pm 1}]$.
The height-one spectrum is in bijection with the disjoint union $\htop(D) \sqcup \htop(K[\vec x])$, respectively, with $\htop(D) \sqcup \htop(K[\vec x^{\pm 1}])$.
This is not hard to check, and also follows from analogous statements holding more generally for Krull monoid algebras \cite[\S15]{gilmer84}.

More specifically, if $P \in \htop(D[\vec x])$, then there are two possibilities.
\begin{itemize}
  \item If $P \cap D \ne 0$, then $P \cap D \in \htop(D)$ and $P = (P \cap D)[\vec x]$. The associated valuation $\val_P\colon K(\vec x) \to \bZ \cup \{\infty\}$ is on $D[\vec x]$ defined by
  \[
    \val_P\Big( \sum_{\vec m} a_{\vec m} \vec x^{\vec m} \Big) = \min\{\, \val_{P \cap D}(a_{\vec m}) : \vec m \,\}.
  \]
  \item If $P \cap D = 0$, then $P$ localizes to a height-one prime ideal of $K[\vec x]$ and $P = P K[\vec x] \cap D[\vec x]$.
  Since $K[\vec x]$ is factorial, there exists an irreducible polynomial $p \in K[\vec x]$ with $P K[\vec x] = p K[\vec x]$, and $\val_P(f) = \val_p(f)$ is the multiplicity of $p$ in $f$ for $f \in D[\vec x]$.
\end{itemize}

For the Laurent polynomial ring $D[\vec x^{\pm 1}]$, the same argument shows that $\htop(D[\vec x^{\pm 1}])$ is in bijection with the disjoint union of $\htop(D)$ and $\htop(K[\vec x^{\pm 1}])$, that is, in contrast to $D[\vec x]$, the prime ideals $(x_1)$, \dots,~$(x_n)$ do not occur.

\subsubsection{Divisorial Ideals and Divisors} \label{sssec:divisors}
Let $D$ be a Krull domain.
A \defit{fractional ideal} of $D$ is a nonzero\footnote{Sometimes the zero module is allowed, but we exclude it here since this is unnecessary and avoids us having to exclude it all the time. Correspondingly, for us $\cF_v(D)$ will be the set of all nonzero divisorial fractional ideals, whereas other texts commonly write $\cF_v(D)^\times$ to exclude $0$.} $D$-submodule $I$ of $K$ for which there exists $x\in K^\times$ with $xI\subseteq D$.
For a fractional ideal $I$, let
\[
I^{-1}\coloneqq (D\colon\! I) \coloneqq \{\, x \in K : xI \subseteq D \,\} \qquad \text{and} \qquad I_v \coloneqq (I^{-1})^{-1}.
\]
Then $I$ is \defit{divisorial} if $I=I_v$.

The set of nonzero divisorial prime ideals of $D$ is precisely the set of height-one prime ideals $\htop(D)$ \cite[Theorem 3.12]{FOSSUM}.
With the operation of $v$-multiplication, defined by $I \cdot_v J \coloneqq (IJ)_v$, the set of divisorial fractional ideals $\mathcal{F}_v(D)$ forms a free abelian group with basis $\htop(D)$ and identity element $D$  \cite[Corollary 3.14]{FOSSUM}.
Explicitly, for $I \in \cF_v(D)$, there is a unique presentation
\begin{equation} \label{eq:prime-factorization}
  I = \vprod_{P \in \htop(D)} P^{\val_P(I)},
\end{equation}
where the symbol $\vprod$ denotes the $v$-product, where $\val_P(I) \in \bZ$, and where all but finitely many factors are trivial, that is, equal to $D$.

Divisorial closure commutes with localization, explicitly $S^{-1} I_v = (S^{-1} I)_v$ for $I \in \cF_v(D)$ and a multiplicative set $S \subseteq D$ \cite[Corollary 5.5]{FOSSUM}.
Localizing at $P \in \htop(D)$, one sees that $\val_P(I) = \min\{\, \val_P(a) : a \in I \,\}$.

\cref{eq:prime-factorization} motivates the notion of \defit{divisors}.
We call $\htop(D)$ the set of \defit{prime divisors} of $D$.
A \defit{divisor} $E$ is then a formal $\bZ$-linear combination of prime divisors, that is,
\[
  E = \sum_{P \in \htop(D)} n_P P, \quad n_P \in \bZ, \text{ almost all $0$}.
\]
The set of divisors, denoted by $\Div(D)$, is a free abelian group with basis $\htop(D)$.
For $a \in K^\times$, the \defit{principal divisor} associated to $a$ is
\[
  \dv(a) \coloneqq \dv_D(a) \coloneqq \sum_{P \in \htop(D)} \val_P(a) P.
\]
Let $\Princ(D)$ denote the subgroup of principal divisors.

The isomorphism $\mathcal{F}_v(D) \cong \Div(D)$ of free abelian groups maps the $v$-product of divisorial fractional ideals to addition in $\Div(D)$.
Additionally \cite[Proposition 5.9]{FOSSUM},
\[
  \dv\big( (I+J)_v \big) = \min\{ \dv(I), \dv(J) \} = \sum_{P \in \htop(D)} \min\{ \val_P(I), \val_P(J) \} P.
\]

While the isomorphism is vacuous algebraically, it will be important for the computational considerations later on: 
a divisorial fractional ideal is most naturally represented by a (finite) set of generators: $I= (a_1,\dots,a_n)_v$. A divisor is most naturally represented as a formal linear combination of prime divisors, that is, by giving the coefficients.
Passing between the two representations is computationally a non-trivial task, making it important to distinguish between the two different representations.

\subsubsection{Class Groups} 
The class group is the main invariant of a Krull domain, with its computation being a central task of the present paper.
\begin{definition} \label{d:class-group}
  The \defit{\textup(divisor\textup) class group} of a Krull domain $D$ is
  \[
  \Cl(D) \coloneqq \cF_v(D) / \{\, aD : a \in K^\times \} \cong \Div(D) / \Princ(D).
  \]
\end{definition}

\begin{remark}
  A noetherian domain is a Krull domain if and only if it is integrally closed (that is, normal) \cite[Theorem 2.10.2]{GH06}.
  In this case $\Div(D)$ coincides with the group of Weil divisors on $\Spec(D)$, and the class group $\Cl(D)$ is the corresponding Weil divisor class group.
\end{remark}

The importance of Krull domains and their class groups for factorization theory is highlighted by the following classical result.

\begin{theorem}[{\cite[Proposition 6.1]{FOSSUM}}]
  A domain $D$ is factorial if and only if it is a Krull domain with trivial class group.
\end{theorem}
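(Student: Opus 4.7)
The plan is to prove both implications by passing between the factorization of elements and the prime decomposition \eqref{eq:prime-factorization} of divisorial ideals.

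For the forward direction, suppose $D$ is factorial. To show $D$ is a Krull domain, I would choose a set $\mathcal{P}$ of pairwise non-associated prime elements of $D$ and take the family of DVRs to be $(D_{(p)})_{p \in \mathcal{P}}$, where each localization is a DVR with uniformizer $p$. Unique factorization gives $D = \bigcap_{p \in \mathcal{P}} D_{(p)}$, and the finite character property follows because any nonzero $a \in D$ has only finitely many prime factors, so $a \in D_{(p)}^\times$ for all but finitely many $p$. For the class group, observe that each height-one prime $P \in \htop(D)$ contains some prime element $p$, and then $P = pD$ because $pD$ is already a nonzero prime contained in $P$ and $\height(P) = 1$. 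Hence every prime divisor is principal, so $\Princ(D) = \Div(D)$ and $\Cl(D) = 0$.

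For the converse, suppose $D$ is a Krull domain with $\Cl(D) = 0$. Then every $P \in \htop(D)$ is principal, say $P = p_P D$, and $p_P$ is a prime element because $P$ is a prime ideal. Given a nonzero nonunit $a \in D$, use that $D$ is a Krull domain to write
\[
  \dv(a) = \sum_{P \in \htop(D)} \val_P(a) P,
\]
with only finitely many nonzero terms, and note $\val_P(a) \ge 0$ for all $P$ because $a \in D = \bigcap_P D_P$. Set $b \coloneqq \prod_P p_P^{\val_P(a)}$, a finite product of prime elements. Then $\dv(a) = \dv(b)$, so $\dv(a b^{-1}) = 0$. The hard (but standard) point is that this forces $ab^{-1} \in D^\times$: by the finite character property and $\val_P(ab^{-1}) = 0$ for all $P \in \htop(D)$, both $ab^{-1}$ and $(ab^{-1})^{-1}$ lie in $\bigcap_P D_P = D$. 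Therefore $a$ is a product of prime elements up to a unit, so $D$ is factorial.

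The main obstacle, and the only nontrivial input, is the fact that $D^\times = \{\, x \in K^\times : \val_P(x) = 0 \text{ for all } P \in \htop(D)\,\}$ in a Krull domain; this follows directly from \cref{d:krull-domain} together with the observation that the localizations $D_P$ account for all the valuations needed, so that $\val_P(x) \ge 0$ for every $P$ yields $x \in D$. Once this is granted, both implications are routine bookkeeping with the isomorphism $\cF_v(D) \cong \Div(D)$ and the principal divisor map $\dv\colon K^\times \to \Div(D)$.
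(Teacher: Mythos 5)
Your proof is correct. Note, however, that the paper does not prove this statement at all: it is quoted as a classical result with a reference to \cite[Proposition 6.1]{FOSSUM}, so there is no internal argument to compare against. What you have written is essentially the standard textbook proof (the one in Fossum/Bourbaki): for the forward direction you exhibit the essential family of DVRs as the localizations $D_{(p)}$ at prime elements and observe that every height-one prime is generated by a prime element, hence every prime divisor is principal; for the converse you pick a generator $p_P$ of each height-one prime and match $\dv(a)$ against $\dv\bigl(\prod_P p_P^{\val_P(a)}\bigr)$, using $D^\times=\{x\in K^\times:\val_P(x)=0 \text{ for all } P\}$. Two small points you pass over quickly but which are routine: the identity $\dv(p_P)=P$ in the converse needs the observation that distinct height-one primes are incomparable, so $\val_Q(p_P)=0$ for $Q\ne P$ and $\val_P(p_P)=1$ because $p_P$ is a uniformizer of $D_P$; and the description of $D$ and $D^\times$ via the valuations $\val_P$ uses that the localizations at height-one primes form a defining family of finite character for a Krull domain, which the paper records right after \cref{d:krull-domain}. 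With these remarks your argument is complete and faithful to the cited source.
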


Using the unique factorization in $\cF_v(D)$, it is easy to see that Krull domains are FFDs. 
Knowledge of the class group together with the distribution of prime divisors among classes allows one to describe the arithmetic of $D$ in great detail: modulo invertible elements, the multiplicative monoid of $D$ is uniquely determined up to isomorphism by these data \cite[Theorem 2.5.4.3]{GH06}.
Using transfer homomorphisms to monoids of zero-sum sequences one can study the factorization theory of $D$ \cite{GH06} \cite[\S5]{geroldinger-zhong20} \cite[\S3]{geroldinger16}.

We need results on the behavior of class groups under localization and polynomial extensions.
There is a homomorphism
\[
  \cF_v(D) \to \cF_v(S^{-1} D), \quad I \mapsto S^{-1} I,
\]
that induces a homomorphism $\Cl(D) \to \Cl(S^{-1} D)$.
Nagata's Theorem describes the kernel.

\begin{theorem}[{Nagata's Theorem \cite[Corollary 7.2]{FOSSUM}}]\label{t:nagata}
  Let $D$ be a Krull domain and let $S \subseteq D\setminus\{0\}$ be a multiplicative subset.
  Then there is an exact sequence
  \[
  \begin{tikzcd}
  0 \ar[r] & H \ar[r] & \Cl(D) \ar[r] & \Cl(S^{-1} D) \ar[r] & 0,
  \end{tikzcd}
  \]
  where $H$ is the subgroup of $\Cl(D)$ generated by those $P \in \htop(D)$ with $P \cap S\ne \emptyset$.
\end{theorem}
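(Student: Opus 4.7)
The plan is to reduce the statement to an analogous but easier statement at the level of divisor groups and then descend to class groups.

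First I would use the standard bijection between $\htop(S^{-1}D)$ and $\{\, P \in \htop(D) : P \cap S = \emptyset \,\}$, under which $P$ corresponds to $S^{-1}P$ and the valuation $\val_{S^{-1}P}$ restricts to $\val_P$ on $D$. This lets me define a group homomorphism $\varphi \colon \Div(D) \to \Div(S^{-1}D)$ sending each prime divisor $P$ to $S^{-1}P$ when $P \cap S = \emptyset$ and to $0$ otherwise. By the description of the height-one primes of $S^{-1}D$, $\varphi$ is surjective with kernel $\bigoplus_{P \cap S \ne \emptyset} \bZ\, P$.

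Next I would verify the identity $\varphi(\dv_D(b)) = \dv_{S^{-1}D}(b)$ for every $b \in K^\times$: at each $P \in \htop(D)$ with $P \cap S = \emptyset$ both sides contribute $\val_P(b)$ at the corresponding prime of $S^{-1}D$, whereas primes with $P \cap S \ne \emptyset$ lie in $\ker\varphi$ and do not appear as prime divisors of $S^{-1}D$ in the first place. In particular $\varphi$ sends principal divisors to principal divisors, and hence descends to a surjective homomorphism $\bar\varphi\colon \Cl(D) \to \Cl(S^{-1}D)$. Since $H$ is by construction the image of $\ker\varphi$ in $\Cl(D)$, the inclusion $H \subseteq \ker \bar\varphi$ is immediate.

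The remaining task, which I expect to be the most delicate, is the reverse inclusion $\ker \bar\varphi \subseteq H$: it requires pairing the surjectivity statement with the principal-divisor identity, and one has to be careful that the witness of principality is chosen in $K^\times$ rather than in $(S^{-1}D)^\times$. Given $[E] \in \ker \bar\varphi$, one picks $b \in K^\times$ with $\varphi(E) = \dv_{S^{-1}D}(b)$. By the identity above, $\varphi(E - \dv_D(b)) = 0$, so $E - \dv_D(b) \in \ker\varphi$. Projecting to $\Cl(D)$ then yields $[E] = [E - \dv_D(b)] \in H$, which finishes the proof.
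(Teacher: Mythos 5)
Your argument is correct, and since the paper does not prove this statement itself but cites it as Nagata's Theorem from \cite[Corollary 7.2]{FOSSUM}, your divisor-level proof (the bijection $\htop(S^{-1}D) \leftrightarrow \{P \in \htop(D) : P \cap S = \emptyset\}$, the induced surjection $\Div(D) \to \Div(S^{-1}D)$ compatible with principal divisors, and the kernel computation) is essentially the standard textbook argument behind that citation. One small simplification: because $S \subseteq D\setminus\{0\}$, the field of fractions of $S^{-1}D$ equals $K$, so the witness of principality automatically lies in $K^\times$ and the caution you flag in the last step is not actually a delicate point.
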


\begin{corollary}[{\cite[Chapter 8]{FOSSUM}} or \cite[\S16]{gilmer84}] \label{c:cl-polynomial}
  If $D$ is a Krull domain, then 
  \[
  \Cl(D)\cong \Cl(D[\vec{x}])\cong \Cl(D[\vec{x}^{\pm1}]),
  \]
  with the first isomorphism induced by extension of divisorial fractional ideals: $[I] \mapsto \big[I D[\vec x]\big]$ for $I \in \cF_v(D)$, and the second one by localization.
\end{corollary}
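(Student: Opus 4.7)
The plan is to derive both isomorphisms from Nagata's Theorem (\cref{t:nagata}), applied to two different multiplicative subsets of $D[\vec x]$, combined with the explicit description of the height-one spectrum of (Laurent) polynomial rings recalled in \cref{sssec:div-poly}.

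For the first isomorphism $\Cl(D) \cong \Cl(D[\vec x])$, I would apply Nagata's Theorem to $D[\vec x]$ with $S = D \setminus \{0\}$, giving $S^{-1} D[\vec x] = K[\vec x]$, which is factorial and therefore has trivial class group. Nagata's sequence then identifies $\Cl(D[\vec x])$ with the subgroup $H$ generated by the classes of those $P \in \htop(D[\vec x])$ with $P \cap D \ne 0$. By \cref{sssec:div-poly}, these are exactly the primes of the form $\mathfrak p[\vec x]$ with $\mathfrak p \in \htop(D)$. Hence the map $\Cl(D) \to \Cl(D[\vec x])$ induced by $\mathfrak p \mapsto \mathfrak p[\vec x]$ is surjective. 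For injectivity, suppose a divisor $\sum n_{\mathfrak p}\, \mathfrak p[\vec x]$ is of the form $\dv_{D[\vec x]}(f)$ for some $f \in K(\vec x)^\times$. Its image in $\Div(K[\vec x])$ vanishes, so $f \in K[\vec x]^\times = K^\times$, and consequently $\sum n_{\mathfrak p}\, \mathfrak p = \dv_D(f)$ is already principal in $D$.

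For the second isomorphism $\Cl(D[\vec x]) \cong \Cl(D[\vec x^{\pm 1}])$, I would again apply Nagata, this time to $D[\vec x]$ with the multiplicative subset generated by $x_1, \ldots, x_n$, for which $S^{-1} D[\vec x] = D[\vec x^{\pm 1}]$. The kernel of the induced map on class groups is generated by the classes of height-one primes of $D[\vec x]$ meeting $S$; by \cref{sssec:div-poly} these are precisely $(x_1), \ldots, (x_n)$. Each $(x_i)$ is principal, so its class in $\Cl(D[\vec x])$ is trivial, the kernel vanishes, and the surjection from Nagata's sequence is an isomorphism.

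There is no substantial obstacle: both steps are clean applications of Nagata's Theorem, once one knows the description of $\htop(D[\vec x])$ and $\htop(D[\vec x^{\pm 1}])$ in terms of $\htop(D)$ and the height-one primes contracting to zero. The only mildly delicate point is the injectivity step in the first isomorphism, where one uses factoriality of $K[\vec x]$ to pull back a principal divisor supported on extended primes to a principal divisor on $D$.
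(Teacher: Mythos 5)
Your argument is correct. Note that the paper does not prove this corollary at all — it is quoted from \cite[Chapter 8]{FOSSUM} and \cite[\S16]{gilmer84} — so there is no internal proof to compare against; your two applications of Nagata's Theorem (\cref{t:nagata}), combined with the description of $\htop(D[\vec x])$ and $\htop(D[\vec x^{\pm 1}])$ from \cref{sssec:div-poly}, reconstruct the standard argument faithfully. The only step you use silently is that the extension map $\Cl(D)\to\Cl(D[\vec x])$ is well defined, i.e.\ that principal divisors of $D$ extend to principal divisors of $D[\vec x]$; this is immediate from the valuation formula $\val_{\mathfrak p[\vec x]}(a)=\val_{\mathfrak p}(a)$ for $a\in K^\times$ together with $\val_Q(a)=0$ for primes $Q$ with $Q\cap D=0$, and is worth one sentence. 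Likewise, in the second step, the identification of the primes meeting the multiplicative set with $(x_1),\dots,(x_n)$ uses that a principal prime in a Krull domain is divisorial and hence of height one, so any height-one prime containing a monomial equals some $(x_i)$; again a one-line remark, but the argument as a whole is sound.
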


\subsection{Cluster Algebras and Seeds}
We recall cluster algebras and upper cluster algebras, standard references are \cite{FZ02,FZ03_2,BFZ05,Wil14,FWZ,FWZ2,FWZ3}.

(Upper) cluster algebras are defined over a ground ring $D$.
In the context of combinatorics, the ground ring often does not matter too much and is commonly taken to be $\bQ$, $\bR$, or $\bZ$.
By contrast, the ring-theoretic properties we investigate depend heavily on the choice of $D$.
We allow arbitrary domains $D$ (including those of positive characteristic) as ground rings, but for the factorization-theoric results we will have to restrict to Krull domains $D$ later on.
This class is nevertheless wide, as it includes all fields and factorial domains, such as the integers.

\subsubsection{Seeds and Mutations} \label{sssec:seed-and-mutations}
Let $(\bP,\oplus,\cdot)$ be a \defit{semifield}: a torsion-free\footnote{To ensure that the group algebra $D\bP$ is a domain.} abelian group $(\bP,\cdot)$ together with an additional commutative semigroup structure $\oplus$ such that the product distributes over $\oplus$.
The main example is the \defit{tropical semifield} $\Trop(y_1,\dots,y_m)$, which is generated by variables $y_1$, \dots,~$y_m$ with the auxiliary addition
\[
  \prod_{i=1}^m y_i^{a_i} \oplus \prod_{i=1}^m y_i^{b_i} \coloneqq \prod_{i=1}^m y_i^{\min(a_i,b_i)}.
\]
In this case, the group algebra $D\bP$ is isomorphic to the Laurent polynomial ring $D[\vec y^{\pm 1}]$.

Let $K$ be the field of fractions of $D\bP$, and let $F$ be the field of rational functions in $n$ variables over $K$.
A \defit{seed} of rank $n$ is a triple $(\vec x, \vec y, B)$ consisting of
\begin{itemize}
  \item a \defit{cluster}: a tuple $\vec x = (x_1,\dots,x_n)$ whose elements generate the field $F \supseteq K$.
  \item \defit{coefficients}: a tuple $\vec y = (y_1,\dots,y_n) \in \bP^n$, and
  \item an \defit{exchange matrix}: an $n\times n$ skew-symmetrizable integer matrix $B=(b_{ij})$, that is, there exist positive integers $d_1$, \dots, $d_n$ such that $d_i b_{ij} = -d_j b_{ji}$ for all $i$,~$j \in [1,n]$.
\end{itemize}
The elements of a cluster are called \defit{cluster variables}.

To allow $\chr D=2$, we make one extra assumption on the seed (similar to \cite{BMRS15}): if $\chr D=2$ and the $k$-th row of $B$ is zero, then we must have $y_k \ne 1$.
This will ensure that mutation is well-defined.

A seed $(\vec x, \vec y, B)$ can be \defit{mutated} in any direction $k \in [1,n]$ to produce a new seed $(\mu_k(\vec x), \mu_k(\vec y), \mu_k(B))$ as follows.
\begin{itemize}
  \item The tuple $\mu_k(\vec x)$ arises from $\vec x$, by replacing $x_k$ by $x_k'$, defined by
  \[
    x_k x_k' = \frac{y_k}{y_k \oplus 1} \prod_{b_{ik}>0} x_i^{b_{ik}} + \frac{1}{y_k \oplus 1}\prod_{b_{ik}<0} x_i^{-b_{ik}},
  \]
  and leaving all other variables unchanged.
  \item Set $\mu_k(\vec y)=(y_1',\dots,y_n')$ with
  \[
    y_j' = \begin{cases}
      y_k^{-1} & \text{if } j=k, \\
      y_j (y_k \oplus 1)^{-b_{kj}} & \text{if $j \ne k$ and $b_{kj} \le 0$,} \\
      y_j (y_k^{-1} \oplus 1)^{-b_{kj}} & \text{if $j \ne k$ and $b_{kj} \ge 0$.}
    \end{cases}
  \]
  \item If $B=(b_{ij})$, then $\mu_k(B)=(b_{ij}')$ with
  \[
    b_{ij}' = \begin{cases}
      -b_{ij} & \text{if $i=k$ or $j=k$}, \\
      b_{ij} + b_{ik}b_{kj} & \text{if $b_{ik}>0$ and $b_{kj}>0$}, \\
      b_{ij} - b_{ik}b_{kj} & \text{if $b_{ik}<0$ and $b_{kj}<0$}, \\
      b_{ij} & \text{otherwise}.
    \end{cases}
  \]
\end{itemize}

The polynomials $f_k = x_kx_k'\in D\bP[\vec{x}]$ are the \defit{exchange polynomials} associated to the seed $(\vec{x},\vec{y},B)$.

Mutation of $\vec x$ depends on the ground ring $D$, as the addition is carried out over $D$, while mutation of $\vec y$ and $B$ does not depend on $D$.
Since the ground ring is usually fixed, it is not reflected in the notation.
If we want to emphasize the dependence on $D$, we write $(\vec x, \vec y, B; D)$ for the seed.

\begin{remark}
  If $\chr D=2$, then the assumption on $B$ and $y_k$ ensures $x_k x_k' \ne 0$, thereby ensuring that $\mu_k(\vec x)$ is again a tuple of generators for $F$.
  It is easy to check that the assumption is retained under mutation.
\end{remark}

Two seeds are \defit{mutation-equivalent} if one can be obtained from the other by a sequence of mutations.

\subsubsection{Cluster Algebras}

\begin{definition}
  Let $D$ be a domain, let $\bP$ be a semifield, and let $F$ be the field of rational functions in $n$ variables over the field of fractions of $D\bP$.
  Let $(\vec x, \vec y, B)$ be a seed.
  \begin{enumerate}
    \item The \defit{cluster algebra} $A\coloneqq A(B) \coloneqq A(\vec x,\vec y,B;D)$ associated with the initial seed $(\vec x, \vec y, B)$ is the $D \bP$-subalgebra of $F$ generated by all cluster variables in all seeds mutation-equivalent to $(\vec x, \vec y, B)$.
    \item The \defit{upper cluster algebra} $U \coloneqq U(B) \coloneqq U(\vec x, \vec y, B;D)$ associated with the initial seed $(\vec{x}, \vec y, B)$ is
    \[
     U(\vec x, \vec y, B;D)=\bigcap_{\widetilde{\vec x}} D\bP[\widetilde{\vec x}^{\pm 1}],
    \]
    where the intersection is over all clusters $\widetilde{\vec x}$ in seeds mutation-equivalent to $(\vec x, \vec y, B)$.
  \end{enumerate}
\end{definition}

\begin{theorem}[Laurent phenomenon {\cite[Theorem 3.1]{FZ02}}]
  Every cluster variable is a Laurent polynomial in the variables of any given cluster.
  In other words, there is an inclusion $A \subseteq D\bP[\vec x^{\pm 1}]$ for any cluster $\vec x$.
\end{theorem}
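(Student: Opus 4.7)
The plan is to prove the Laurent phenomenon by induction on the length of a mutation sequence, following the caterpillar argument of Fomin and Zelevinsky \cite{FZ02}. Fix the initial seed $(\vec x, \vec y, B)$, set $L \coloneqq D\bP[\vec x^{\pm 1}]$, and let $K$ denote the field of fractions of $D\bP$. It suffices to show that every cluster variable appearing in any seed mutation-equivalent to $(\vec x, \vec y, B)$ lies in $L$.

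The base case is immediate: the entries of $\vec x$ trivially lie in $L$, and after a single mutation in direction $k$ the new variable $x_k'$ satisfies $x_k x_k' = f_k \in D\bP[\vec x]$, so $x_k' \in L$. For the inductive step, consider a cluster $\vec x^{(m)}$ obtained from $\vec x^{(m-1)}$ by mutation in direction $k$, with new variable $z$. The exchange relation reads $z \cdot x_k^{(m-1)} = f_k^{(m-1)} \in D\bP[\vec x^{(m-1)}]$. By the inductive hypothesis, every entry of $\vec x^{(m-1)}$ (and of $\vec x^{(m-2)}$) already lies in $L$, so $z$ can at least be written as an element of the fraction field of $L$.

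The main obstacle, and the technical heart of the argument, is to upgrade this to the genuine membership $z \in L$. Writing $z = N(\vec x)\,/\,\bigl(\vec x^{\vec a}\, q(\vec x)\bigr)$ with $N$, $q \in K[\vec x]$ coprime to each $x_i$ and to each other, one must rule out any non-monomial denominator factor $q$. This is the content of the \emph{caterpillar lemma}: examining the three consecutive clusters $\vec x^{(m-2)}$, $\vec x^{(m-1)}$, $\vec x^{(m)}$ and using the inductive Laurent expressions to pull the two monomial terms composing $f_k^{(m-1)}$ back to $\vec x$-coordinates, one shows that these two terms remain coprime in $K[\vec x]$ up to powers of the $x_i$. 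Propagating this coprimality through the mutation steps requires careful bookkeeping of the mutation rules for $B$ and $\vec y$, exploiting the initial coprimality of the two binomials of $f_k$ together with the torsion-freeness of $\bP$. Once established, it forces $q$ to be a unit in $K[\vec x]$, yielding $z \in L$ and closing the induction.
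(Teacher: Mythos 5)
Your route is genuinely different from the paper's. The paper does \emph{not} reprove the Laurent phenomenon: it cites \cite{FZ02} for the known case over $\bZ\bP$ and only supplies a short transfer argument to an arbitrary ground domain $D$, via the reduction homomorphism $\pi\colon \bZ\bP \to (\bZ/p\bZ)\bP \hookrightarrow D\bP$ extended to $\bZ\bP[\vec x^{\pm 1}]$, showing inductively that mutation over $D$ is the $\pi$-image of mutation over $\bZ$ (using that $\pi$ is only defined on the Laurent polynomial ring, not on the fraction field, which is exactly where the $\bZ$-Laurent phenomenon is invoked). You instead propose to rerun the Fomin--Zelevinsky caterpillar induction directly over $D\bP$.

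As written, your proposal has a gap. The step you yourself call the technical heart --- establishing and propagating the coprimality statements of the caterpillar lemma through mutation --- is only described (``requires careful bookkeeping''), not carried out, so the argument is an outline of \cite{FZ02} rather than a proof. More importantly, the statement here is for an \emph{arbitrary} domain $D$, including positive characteristic, and you never address whether the classical gcd computations in $K[\vec x]$ survive base change: the paper's setting already signals the danger, since for $\chr D = 2$ an extra hypothesis on the seed ($y_k \ne 1$ whenever the $k$-th row of $B$ is zero) is needed merely to guarantee $x_k x_k' \ne 0$, i.e.\ that mutation is well-defined at all; a verbatim rerun of the caterpillar argument would have to locate and handle all such characteristic-dependent degenerations. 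To close the gap you should either carry out the caterpillar lemma over $D\bP$ with these points checked, or (much shorter, and what the paper does) take the $\bZ\bP$ case as known and transfer it via $\pi$, which avoids every coprimality consideration.
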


This is typically proven for $D=\bZ$ and easily carries over to any domain of characteristic $0$: in this case $\bZ P \subseteq D \bP$ and all mutation operations occur in the subring $\bZ \bP$.
We sketch that the same approach works in arbitrary characteristic if some care is taken.

\begin{proof}[Proof sketch for arbitrary domains $D$]
  Let $p = \chr D \ge 0$.
  There is a ring homomorphism
  \[
    \pi \colon \bZ \bP \to (\bZ/p\bZ) \bP \hookrightarrow D \bP,
  \]
  with kernel $p \bZ \bP$.
  This induces a homomorphism $\pi\colon \bZ \bP [\vec x^{\pm 1}] \to D \bP[\vec x^{\pm 1}] \subseteq F$.

  If we consider the cluster algebra defined over $\bZ$ by the seed $(\vec x, \vec y, B; \bZ)$, we get
  \[
  \pi(x_k x_k') = \frac{y_k}{y_k \oplus 1} \prod_{b_{ik}>0} \pi(x_i)^{b_{ik}} + \frac{1}{y_k \oplus 1}\prod_{b_{ik}<0} \pi(x_i)^{-b_{ik}}.
  \]
  Since we know that $x_k'$ is a Laurent polynomial in the $x_i$ over $\bZ \bP$ (by the Laurent phenomenon over $\bZ \bP$), the left side is equal to $\pi(x_k)\pi(x_k')$ (keep in mind that $\pi$ is only defined on $\bZ\bP[\vec x^{\pm 1}]$, not on its field of fractions).
  Inductively, we see that mutation and application of $\pi$ interchange, and the Laurent phenomenon over $\bZ \bP$ for $(\vec x, \vec y, B; \bZ)$ implies the Laurent phenomenon over $D \bP$ for $(\vec x, \vec y, B; D)$.
\end{proof}

The Laurent phenomenon shows that there is always an inclusion $A \subseteq U$ between a cluster algebra and its upper cluster algebra.
The equality
\[
  A=U
\]
is true in many important subclasses of cluster algebras, in particular in locally acyclic ones \cite{M14}.

\subsubsection{Locally Acyclic Cluster Algebras} \label{sssec:locally-acyclic}
Localizing a cluster algebra in general does not again yield a cluster algebra.
This led Muller to the notions of freezings and cluster localizations \cite{M13,M14}.
While the ground ring in \cite{M14} is $\bZ$, the arguments go through for arbitrary domains $D$.\footnote{In particular, this is true for the key \cite[Lemma 1]{M14}, which shows that if $A^\dagger=U^\dagger$ for a freezing, then $A^\dagger$ is a cluster localization of $A$, and for \cite[Proposition 3]{M14}, which shows $A=U$ for isolated cluster algebras.}

Let $(\vec x, \vec y, B)$ be a seed of rank $n$.
Its \defit{freezing} at a subset $S\coloneqq \{x_{i_1},\dots,x_{i_s}\} \subseteq \{ x_1,\dots, x_n \}$ of the cluster $\vec x$ is obtained as follows.
\begin{itemize}
\item Replace $\bP$ by
\[
\bP^\dagger \coloneqq \bP \oplus \Trop(x_{i_1},\dots,x_{i_s}) \cong \bP \oplus \bZ^{\card{I}}
\]
with
\[
  (p,x_{i_1}^{a_1} \cdots x_{i_s}^{a_s}) \oplus (q,x_{i_1}^{b_1} \cdots x_{i_s}^{b_s}) \coloneqq (p \oplus q, x_{i_1}^{\min(a_1,b_1)} \cdots x_{i_s}^{\min(a_s,b_s)}).
\]
\item Remove the entries of $S$ from $\vec x$ to obtain an $n-s$ tuple $\vec x^\dagger$,
\item Define $\vec y^\dagger = (y_j^\dagger : j \in [1,n] \setminus \{i_1,\dots,i_s\})$ (in order) with entries
\[
  y_j^\dagger = y_j x_{i_1}^{b_{i_1 j}} \cdots x_{i_s}^{b_{i_s j}}.
\]
\item Remove the rows and columns with indices $i_1$, \dots,~$i_s$ from $B$ to obtain $B^\dagger$.
\end{itemize}

The frozen seed $(\vec x^\dagger, \vec y^\dagger, B^\dagger)$ defines a cluster algebra $A[x_{i_1}^\dagger,\dots,x_{i_s}^\dagger]$ of rank $n-s$ called the \defit{freezing} of $A$ at $x_{i_1}$, $\dots$,~$x_{i_s}$, and similarly defines a freezing $U[x_{i_1}^\dagger,\dots,x_{i_s}^\dagger]$ of $U$.
The effect of freezing a variable $x_{i_j}$ is to move it into the coefficient semifield (which also means adjoining its inverse $x_{i_j}^{-1}$) and to remove it from the mutation process.

\begin{definition}
  A \defit{cluster localization} of a cluster algebra $A$ is a freezing $A[x_{i_1}^\dagger,\dots,x_{i_s}^\dagger]$ such that
  \[
    A[x_{i_1}^\dagger,\dots,x_{i_s}^\dagger] = A[x_{i_1}^{-1},\dots,x_{i_s}^{-1}].
  \]
\end{definition}

A cluster algebra is \defit{isolated} if the exchange matrix is the zero matrix.
A finite family $A_1$, \dots,~$A_l$ of cluster localizations of $A$ is a \defit{cluster cover} of $A$ if for every $P \in \Spec(A)$, there exists some $i$ with $PA_i \ne A_i$.
Geometrically, this means that $\Spec(A)$ is covered by the images of the maps $\Spec(A_i) \to \Spec(A)$ induced by the localizations.
In particular, one has $A = \bigcap_{i=1}^l A_i$ \cite[Proposition 2]{M14}

\begin{definition}[{\cite[Definition 2]{M14}}]
  A cluster algebra $A$ is \defit{locally acyclic} if it has a cluster cover by isolated cluster algebras.
\end{definition}

A seed $(\vec x, \vec y, B)$ is \defit{acyclic} if there is no sequence $i_1$, \dots,~$i_l \in [1,n]$ with $l \ge 2$ such that $b_{i_si_{s+1}}>0$ for all $s \in [1,l-1]$, and $i_1=i_l$.
A cluster algebra is \defit{acyclic} if it has an acyclic seed (acyclicity is not mutation-invariant).
Locally acyclic cluster algebras equivalently can be defined as those cluster algebras that have a cluster cover by acyclic cluster algebras.

The main results of \cite{M14} carry over from $\bZ$ to arbitrary domains $D$.

\begin{theorem}[\cite[Theorem 2]{M14}]
  If $A$ is locally acyclic, then $A=U$.
\end{theorem}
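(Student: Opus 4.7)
The plan is to follow Muller's original argument for \cite[Theorem 2]{M14}, verifying that every step survives the replacement of the ground ring $\bZ$ by an arbitrary domain $D$. The argument has three ingredients, two of which are already flagged in the paper's footnote: \cite[Proposition 3]{M14}, asserting that an isolated cluster algebra coincides with its upper cluster algebra; \cite[Lemma 1]{M14}, asserting that a freezing $A^\dagger$ satisfying $A^\dagger = U^\dagger$ is a cluster localization of $A$; and \cite[Proposition 2]{M14}, asserting that a cluster cover $A_1$, \dots,~$A_l$ realizes $A$ as $\bigcap_{i=1}^l A_i$ and that $U \subseteq \bigcap_{i=1}^l U_i$.

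Granting these over $D$, I would first invoke local acyclicity to fix a cluster cover $A_1$, \dots,~$A_l$ of $A$ by isolated cluster algebras. By the extension of \cite[Proposition 3]{M14}, each $A_i = U_i$. Next, iteratively applying the extension of \cite[Lemma 1]{M14} along the chain of freezings producing each $A_i$ from $A$, I would identify $A_i$ with a localization $A[S_i^{-1}]$ at a multiplicative subset $S_i \subseteq A$ generated by products of cluster variables. Since localization commutes with the intersection defining the upper cluster algebra, $U[S_i^{-1}] \subseteq U_i$. Assembling everything,
\[
  A \;\subseteq\; U \;\subseteq\; \bigcap_{i=1}^l U[S_i^{-1}] \;\subseteq\; \bigcap_{i=1}^l U_i \;=\; \bigcap_{i=1}^l A_i \;=\; A,
\]
so all inclusions are equalities and $A = U$.

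The main obstacle is verifying that the three quoted ingredients remain valid in positive characteristic. Ingredient \cite[Proposition 3]{M14} is immediate: with $B = 0$ the mutation formula collapses to $x_k x_k' = (y_k \oplus 1)^{-1}(y_k + 1)$, so every cluster generates the Laurent polynomial ring $D\bP[\vec x^{\pm 1}]$, and hence $A = U = D\bP[\vec x^{\pm 1}]$ without any assumption on $D$. For \cite[Lemma 1]{M14}, Muller's proof uses only the Laurent phenomenon and algebraic identities in the ambient field $F$; applying the same reduction homomorphism $\pi\colon \bZ\bP[\vec x^{\pm 1}] \to D\bP[\vec x^{\pm 1}]$ that the excerpt already used to transport the Laurent phenomenon to arbitrary $D$ reduces each assertion to the known case over $\bZ$. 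For \cite[Proposition 2]{M14}, the equality $A = \bigcap_i A_i$ is a purely spectral statement about commutative rings, insensitive to the base ring. The characteristic-two hypothesis built into the definition of a seed in the excerpt guarantees that no intermediate mutation becomes ill-defined along the way.
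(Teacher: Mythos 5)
Your overall route is the same one the paper takes: it does not reprove Muller's theorem but cites \cite[Theorem 2]{M14} and records (in the footnote in \cref{sssec:locally-acyclic}) that the key ingredients, \cite[Lemma 1]{M14} and \cite[Proposition 3]{M14}, remain valid over an arbitrary domain $D$; your assembly $A \subseteq U \subseteq \bigcap_i U[S_i^{-1}] \subseteq \bigcap_i U_i = \bigcap_i A_i = A$ is exactly Muller's argument, and the inclusion $U[S_i^{-1}] \subseteq U_i$ holds (not because localization commutes with the defining intersection, but because every cluster of the freezing is a cluster of $A$ with the frozen variables adjoined as units, so $U \subseteq U_i$ and the elements of $S_i$ are units in $U_i$).

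The genuine gap is your verification of the isolated case. From $B=0$ you get $x_k x_k' = (y_k \oplus 1)^{-1}(y_k + 1)$, but this is a unit of $D\bP$ only when $y_k+1$ is, which fails in exactly the situations the theorem is meant to cover: with trivial coefficients over $D=\bZ$ one has $x_kx_k'=2$ and the rank-one isolated cluster algebra is $\bZ[x,2/x]\ne \bZ[x^{\pm 1}]$ (cf.\ \cref{exm:flir-1-var}), and with tropical coefficients $y_k+1$ is a non-monomial element of $D\bP$, hence not a unit, even over a field of characteristic zero. So it is not true that every cluster generates $D\bP[\vec x^{\pm 1}]$, and the claim ``$A=U=D\bP[\vec x^{\pm 1}]$'' is false; the equality $A=U$ for isolated cluster algebras is correct but requires an actual argument, for instance the computation $D\bP[x^{\pm 1}]\cap D\bP[(f/x)^{\pm 1}]=D\bP[x,f/x]$ iterated over the $n$ directions (this is how the paper later proceeds via \cref{p:flir-1-var} and \cref{l:isolated-algebra}), or Muller's proof of \cite[Proposition 3]{M14} repeated verbatim over $D$. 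Relatedly, your justification of \cite[Lemma 1]{M14} by ``reduction via $\pi$ to the case $\bZ$'' does not work as stated: $\pi$ is not injective in positive characteristic, and an equality of subrings of the ambient function field over $D$ is not a formal consequence of the corresponding equality over $\bZ$. The correct observation is simply that Muller's proof of Lemma 1 (the chain $A[x^{-1}] \subseteq U[x^{-1}] \subseteq U^\dagger = A^\dagger \subseteq A[x^{-1}]$) uses only the Laurent phenomenon, which the paper establishes over arbitrary $D$, and is therefore characteristic-free when rerun directly over $D$.
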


\begin{remark}
The equality $A=U$ also holds for cluster algebras arising from surfaces without punctures and one marked point on the boundary \cite{canacki-lee-schiffler15}, for cluster algebras of moduli spaces of $G$-local systems \cite{ishibashi-oya-shen23}, and symmetric Poisson nilpotent algebras \cite{goodearl-yakimov23}.
The equality is sensitive to the ground ring \cite{bucher-machacek-shapiro19}.
\end{remark}

Locally acyclic cluster algebras are, as intersections of integrally closed domains, always integrally closed.
Suppose $D$ is noetherian and $\bP$ is finitely generated. Then $D\bP$ is noetherian, and locally acyclic cluster algebras over $D$ with coefficients in $\bP$ are finitely generated as $D \bP$-algebras \cite[Theorem 4.2]{M13} (using \cite[\href{https://stacks.math.columbia.edu/tag/00EP}{Lemma 00EP}]{stacks-project}).
In particular, such cluster algebras are noetherian and integrally closed, and hence Krull domains.
Below we will extend this result to ground rings that are Krull domains (\cref{t:krull-cluster-algebras}).

\begin{remark} \label{r:no-cover-by-cluster-tori}
  In general, it is not possible to cover $\Spec(A)$ by cluster tori, that is, by localizations of the form $D\bP[\vec x^{\pm 1}]$ for clusters $\vec x$ of $A$, even when $A$ is acyclic.
  Indeed, if a cover by cluster tori exists, then $A$ is non-singular \cite[Proposition 2.4]{BM25}.
  However, even acyclic cluster algebras, such as the $A_n$-type cluster algebras, can have singularities \cite{BFMS23}.
\end{remark}

\section{Laurent Intersection Rings} \label{s:lir}

In this section we introduce (finite) Laurent intersection rings and determine their class groups (\cref{t:class-group-over-krull-domains}).
This lays the algebraic groundwork for the computational work in \cref{s:computable-domains}.
We also discuss iterated Laurent intersection rings in \cref{s:iterated-flirs} and conclude with a full characterization of the (easy) one-variable case, in \cref{s:1-var-lir}.

\begin{definition} \label{d:lir}
  Let $D$ be a domain with field of fractions $K$ and let $K(\vec x)$ with $\vec x=(x_1,\dots,x_n)$ be the field of rational functions in $n$ variables. 
  A subring $A \subseteq K(\vec x)$ is a \defit{Laurent intersection ring over $D$} \textup(or \defit{$D$-LIR}\textup) if there exists a nonempty set $S \subseteq K(\vec x)^n$ such that
  \begin{propenumerate}
  \item \label{lir:algindep}\label{lir:first} we have $K(\vec x)=K(\vec y)$ for all $\vec y \in S$,
  \item \label{lir:intersection} we have
  \[
   A = \bigcap_{\vec y \in S} D[\vec y^{\pm 1}], 
  \]
  \item \label{lir:containment}\label{lir:last} and $D[\vec y] \subseteq A$ for all $\vec y \in S$.
  \end{propenumerate} 
\end{definition}

If the base ring $D$ is clear from context, we just write that $A$ is a Laurent intersection ring or a LIR.
If $A$ is a LIR, a set $S$ satisfying properties \ref{lir:first}--\ref{lir:last} is called a \defit{system of charts}. 
An element of $S$ is a \defit{chart}.

By definition, a chart consist of $n$ algebraically independent elements over $K$ that generate the entire field $K(\vec x)$.
By a change of variables on $K(\vec x)$, we may therefore assume $\vec x \in S$ whenever convenient.
If $n=0$, then $A=D$.
We usually exclude this trivial case.

Ultimately, we are interested in Krull domains in this class of rings, where the following holds.

\begin{lemma} \label{l:lir-krull-subsystem}
  Let $D$ be a domain.
  If $A$ is $D$-LIR and a Krull domain, then every system of charts $S$ for $A$ contains a finite subsystem of charts $\{ \vec y(0)=\vec x, \vec y(1), \dots, \vec y(r) \} \subseteq S$.
  Moreover, each $P \in \htop(A)$ extends non-trivially to at least one of the localizations $D[\vec y(i)^{\pm 1}]=A[\vec y(i)^{-1}]$ with $i \in [0,r]$.
\end{lemma}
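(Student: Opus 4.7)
The plan is to reduce everything to statements about the height-one spectrum of the Krull domain $A$. First, combining \cref{lir:intersection} and \cref{lir:containment} gives, for every chart $\vec y \in S$, the equality
\[
  A[\vec y^{-1}] = D[\vec y^{\pm 1}],
\]
since $A \subseteq D[\vec y^{\pm 1}]$ yields $A[\vec y^{-1}] \subseteq D[\vec y^{\pm 1}]$ while $D[\vec y] \subseteq A$ gives the reverse inclusion. As $A$ is Krull, this localization decomposes as
\[
  A[\vec y^{-1}] = \bigcap_{P \in T_{\vec y}} A_P, \quad T_{\vec y} \coloneqq \{\, P \in \htop(A) : y_i \notin P \text{ for all } i \,\},
\]
and the complement $\htop(A) \setminus T_{\vec y}$ is finite by the finite character property, being the union over $i$ of the finitely many prime divisors containing the nonzero element $y_i \in A$.

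The key technical input is the \emph{essential-families} property of Krull domains: if $T \subsetneq \htop(A)$, then $\bigcap_{P \in T} A_P \supsetneq A$. To establish it, I would pick $P_0 \in \htop(A) \setminus T$ and any $b \in P_0^{-1} \setminus A$; such a $b$ exists because $P_0$ is a nontrivial element of $\cF_v(A)$, forcing $A \subsetneq P_0^{-1}$. For every $P \in \htop(A) \setminus \{P_0\}$, the incomparability of distinct height-one primes supplies $c \in P_0 \setminus P$, whence
\[
  \val_P(b) = \val_P(bc) - \val_P(c) = \val_P(bc) \geq 0,
\]
using $bc \in P_0^{-1} \cdot P_0 \subseteq A$ and $\val_P(c) = 0$. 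Hence $b \in \bigcap_{P \neq P_0} A_P \setminus A \subseteq \bigcap_{P \in T} A_P \setminus A$.

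Applying this to $A = \bigcap_{\vec y \in S} A[\vec y^{-1}] = \bigcap_{P \in T} A_P$ with $T \coloneqq \bigcup_{\vec y \in S} T_{\vec y}$ forces $T = \htop(A)$: every $P \in \htop(A)$ survives in at least one $A[\vec y^{-1}]$ with $\vec y \in S$. This is already the ``moreover'' clause in disguise.

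Finally, I would extract the finite subsystem. After a harmless change of variables on $K(\vec x)$ I may take $\vec x = \vec y(0) \in S$. The set $\htop(A) \setminus T_{\vec y(0)} = \{P_1, \ldots, P_r\}$ is finite by the first paragraph, and for each $P_j$ the previous step produces some $\vec y(j) \in S$ with $P_j \in T_{\vec y(j)}$. Then $\bigcup_{i=0}^r T_{\vec y(i)} = \htop(A)$, so
\[
  \bigcap_{i=0}^r A[\vec y(i)^{-1}] = \bigcap_{P \in \htop(A)} A_P = A,
\]
verifying \cref{lir:intersection} for $\{\vec y(0), \ldots, \vec y(r)\}$; \cref{lir:algindep} and \cref{lir:containment} are automatically inherited from $S$. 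The only mildly delicate point is the essential-families lemma, which however admits the short valuation-theoretic proof sketched above.
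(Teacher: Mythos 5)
Your proof is correct and follows essentially the same route as the paper: identify $D[\vec y^{\pm 1}]$ with the localization $A[\vec y^{-1}]$, write it as $\bigcap A_P$ over the height-one primes avoiding the chart variables, use the finite character property at one fixed chart to isolate the finitely many primes containing $x_1\cdots x_n$, and use the fact that no $A_P$ can be dropped from $\bigcap_{P\in\htop(A)}A_P$ to produce a chart in which each such prime survives. The only divergence is the justification of that last ``essentiality'' step: the paper cites the Approximation Theorem for Krull domains to get $f$ with $\val_{P_0}(f)<0$ and $\val_Q(f)\ge 0$ for all other $Q$, whereas you argue directly with an element $b\in P_0^{-1}\setminus A$ together with incomparability of height-one primes; your argument is correct and somewhat more elementary, but it establishes the same standard fact, so the overall approach is the same.
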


\begin{proof}
  Let $S$ be a system of charts for $A$.
  By definition, we have $A = \bigcap_{P \in \htop(A)} A_P$, where the $A_P$ are DVRs and the intersection has finite character.
  If $\vec y=(y_1,\dots,y_n) \in S$, then $D[\vec y^{\pm 1}]=A[(y_1 \cdots y_n)^{-1}]$ is a localization since $D[\vec y] \subseteq A$ by \ref{lir:containment} of \cref{d:lir}. Therefore,
  \[
  D[\vec y^{\pm 1}] = \bigcap_{\substack{P \in \htop(A) \\ y_1\cdots y_n \not \in P}} A_P,
  \]
  by \cite[Proposition 1.8]{FOSSUM}.

  Fix some $\vec x=(x_1,\dots,x_n) \in S$.
  By the finite character property of a Krull domain, there are only finitely many $P_1$, \dots,~$P_r \in \htop(A)$ with $x_1\cdots x_n \in P_i$.
  Using \ref{lir:intersection} of \cref{d:lir},  
  \[
  \bigcap_{P \in \htop(A)} A_P = A = \bigcap_{\vec y \in S} D[\vec y^{\pm 1}] = \bigcap_{\vec y \in S} \bigcap_{\substack{P \in \htop(A) \\ y_1\cdots y_n \not \in P}} A_P.
  \]
  Removal of any DVR $A_P$ in the first intersection yields a proper overring (by the Approximation Theorem for Krull domains \cite[Theorem 5.8]{FOSSUM}, there exists an $f \in K(\vec x)$ such that $\val_P(f)< 0$ and $\val_{Q}(f) \ge 0$ for all $Q \in \htop(A) \setminus \{P\}$).
  It follows that, for each $i$, there exists some $\vec y(i)=(y_1(i),\dots,y_n(i)) \in S$ such that $y_1(i) \cdots y_n(i) \not \in P_i$.
  Hence,  
  \[
  A = D[\vec x^{\pm 1}] \cap \bigcap_{i=1}^r D[\vec y(i)^{\pm 1}],
  \]
  and so $\big\{\vec x, \vec y(1), \dots, \vec y(r)\big\}$ is a finite system of charts for $A$.
\end{proof}

The previous lemma motivates the following key definition.

\begin{definition} \label{d:flir}
  Let $D$ be a domain.
  A ring $A$ is a \defit{finite Laurent intersection ring over $D$} \textup(or \defit{$D$-FLIR}\textup) if it is a $D$-LIR having a finite system of charts.
\end{definition}

\begin{proposition} \label{p:lir-krull}
  Let $A$ be a LIR over some domain $D$.
  Then $A$ is a Krull domain if and only if $D$ is a Krull domain and $A$ is a $D$-FLIR.
\end{proposition}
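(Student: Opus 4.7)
The plan is to prove both directions by leveraging the structure of the system of charts together with the standard behavior of Krull domains under localization, intersection, and (Laurent) polynomial extensions.

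For the forward direction, suppose $A$ is a Krull domain. The FLIR property is essentially immediate from \cref{l:lir-krull-subsystem}: any system of charts $S$ for $A$ contains a finite subsystem, and that finite subsystem itself witnesses $A$ as a FLIR. It remains to show that $D$ is a Krull domain. Here I would fix any chart $\vec y \in S$ and use that, as observed in the proof of \cref{l:lir-krull-subsystem}, the ring $D[\vec y^{\pm 1}] = A[(y_1 \cdots y_n)^{-1}]$ is a localization of $A$, and therefore a Krull domain. By the results recalled in \cref{sssec:div-poly}, $D[\vec y^{\pm 1}]$ is a Krull domain if and only if $D$ is, so $D$ is a Krull domain.

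For the converse, suppose $D$ is a Krull domain and $A$ is a $D$-FLIR with a finite system of charts $S = \{\vec y(1), \ldots, \vec y(r)\}$. By \cref{d:lir}\ref{lir:intersection} we have
\[
  A = \bigcap_{i=1}^r D[\vec y(i)^{\pm 1}].
\]
Each factor $D[\vec y(i)^{\pm 1}]$ is a Laurent polynomial ring over the Krull domain $D$, hence itself a Krull domain (again by \cref{sssec:div-poly}). All of these rings share the same field of fractions, namely $K(\vec x) = K(\vec y(i))$ by \cref{d:lir}\ref{lir:algindep}, and this is also the field of fractions of $A$ since $A$ contains each $D[\vec y(i)]$. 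A finite intersection of Krull domains sharing a common field of fractions is again a Krull domain (the valuations defining each factor assemble into a family of DVRs with the finite character property for the intersection; see \cite[Chapter~1]{FOSSUM}), so $A$ is a Krull domain.

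There is no real obstacle here: both directions are essentially bookkeeping on top of \cref{l:lir-krull-subsystem} and the standard facts on Krull domains recalled in \cref{ssec:krull} and \cref{sssec:div-poly}. The only mild subtlety is making sure that the finite character property transfers correctly to the intersection, but this follows immediately from the finitely-many-factors hypothesis built into the notion of a FLIR, which is precisely the point of introducing \cref{d:flir}.
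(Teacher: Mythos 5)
Your proposal is correct and follows essentially the same route as the paper: the converse direction uses that a finite intersection of Laurent polynomial rings over a Krull domain (all with the same field of fractions) is Krull, and the forward direction uses that $D[\vec x^{\pm 1}]$ is a localization of $A$ (hence Krull, hence $D$ is Krull) together with \cref{l:lir-krull-subsystem} for finiteness. The extra remarks about the field of fractions of $A$ are harmless bookkeeping already implicit in the paper's argument.
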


\begin{proof}
  Suppose first that $D$ is a Krull domain and that $A$ is a finite Laurent intersection ring, say $A= \bigcap_{\vec y \in S} D[\vec y^{\pm 1}]$ with $S \subseteq K(\vec x)^n$.
  Each $D[\vec y^{\pm 1}]$ is a Krull domain, and finite intersections of Krull domains with the same field of fractions are Krull domains, so $A$ is a Krull domain.

  Conversely, if $A$ is a Krull domain and $\vec x$ is a chart for the Laurent intersection ring $A$, then the localization $A[(x_1\cdots x_n)^{-1}]=D[\vec x^{\pm 1}]$ is a Krull domain. 
  Therefore, the ground ring $D$ is a Krull domain.
  \Cref{l:lir-krull-subsystem} shows that $A$ is a finite Laurent intersection ring.
\end{proof}

If $D \subseteq A$ is an extension of Krull domains, there is a natural homomorphism $\cF_v(D) \mapsto \cF_v(A)$ given by $I \mapsto (IA)_v$.
If the extension satisfies the property \defit{PDE} (for \emph{pas d'èclatement}), meaning $\height(P \cap D) \le 1$ for every $P \in \htop(A)$, then this induces a homomorphism $\Cl(D) \to \Cl(A)$ \cite[Theorem 6.2 and Proposition 6.3]{FOSSUM}. 
We verify that $D$-FLIRs satisfy the PDE property.

\begin{lemma}\label{l:PDE}
  Let $A$ be a $D$-FLIR over a Krull domain $D$.
  Then $\height(P\cap D)\le 1$ for every $P\in \htop(A)$.
\end{lemma}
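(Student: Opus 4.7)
The plan is to reduce the statement to the corresponding well-known fact for Laurent polynomial rings over Krull domains. The key point is that every chart $\vec y$ exhibits $D[\vec y^{\pm 1}]$ as a localization of $A$, and \cref{l:lir-krull-subsystem} guarantees that every $P \in \htop(A)$ survives in at least one such localization.

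First, I would invoke \cref{l:lir-krull-subsystem} to select a chart $\vec y = (y_1,\dots,y_n)$ from a finite system of charts such that $PD[\vec y^{\pm 1}]$ is a proper ideal, equivalently, such that $y_1\cdots y_n \notin P$. Since $D[\vec y^{\pm 1}] = A[(y_1\cdots y_n)^{-1}]$ is the localization of $A$ at the multiplicative set generated by $y_1\cdots y_n$, and this set is disjoint from $P$, standard localization theory yields that
\[
  Q \coloneqq P D[\vec y^{\pm 1}]
\]
is a height-one prime of $D[\vec y^{\pm 1}]$ with $Q \cap A = P$. I would then note that taking the contraction further down to $D$ preserves the intersection: for $a \in D \cap Q$, writing $a = p/(y_1\cdots y_n)^k$ with $p \in P$ forces $a \in P$ since $(y_1\cdots y_n)^k \notin P$. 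Thus $P \cap D = Q \cap D$.

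Finally, I would apply the description of height-one primes of Laurent polynomial rings over Krull domains recalled in \cref{sssec:div-poly}: for the height-one prime $Q$ of $D[\vec y^{\pm 1}]$, either $Q \cap D = 0$, in which case $\height(P \cap D) = 0$, or $Q \cap D \in \htop(D)$, in which case $\height(P \cap D) = 1$. In either case $\height(P \cap D) \le 1$, which is the desired conclusion.

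There is no serious obstacle here; the lemma is a clean corollary of \cref{l:lir-krull-subsystem} combined with the structure of $\htop(D[\vec y^{\pm 1}])$. The only point that requires a moment's care is verifying $P \cap D = Q \cap D$, which is a routine localization argument once one knows that $y_1 \cdots y_n \notin P$.
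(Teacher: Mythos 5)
Your proof is correct and follows essentially the same route as the paper: pick a chart $\vec y$ in which $P$ survives (the ``moreover'' part of \cref{l:lir-krull-subsystem}), use that $D[\vec y^{\pm 1}]$ is a localization of $A$ so that $PD[\vec y^{\pm 1}]$ is a height-one prime contracting to $P$, and then contract down to $D$. The only cosmetic difference is the last step: you invoke the description of $\htop(D[\vec y^{\pm 1}])$ recalled in \cref{sssec:div-poly}, while the paper argues directly that any nonzero prime of $D$ contained in $P\cap D$ extends to a nonzero prime inside the height-one prime $PD[\vec y^{\pm 1}]$ and hence equals $P\cap D$.
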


\begin{proof}
  If $P\in \htop(A)$, there exists a chart $\vec{y}$ such that $P D[\vec{y}^{\pm1}]\ne D[\vec{y}^{\pm 1}]$.
  Since $D[\vec{y}^{\pm 1}]$ is a localization of $A$, then also $\height(P D[\vec{y}^{\pm1}])=1$.
  If there exists $0 \ne Q\in \Spec(D)$ such that $Q\subseteq P\cap D$, then
  \[
  0 \ne Q D[\vec{y}^{\pm 1}]\subseteq (P\cap D) D[\vec{y}^{\pm 1}]\subseteq P D[\vec{y}^{\pm1}],\]
  hence $Q D[\vec {y}^{\pm 1}]=P D[\vec{y}^{\pm1}]$, since the latter has height one, and so $Q = P \cap D$.
  This shows $\height(P\cap D)\le 1$.
\end{proof}

We can now determine $\Cl(A)$ in terms of $\Cl(D)$.
In a first step, we show that Nagata's Theorem leads to split short exact sequence.

\begin{lemma} \label{l:class-group-splits}
  Let $A$ be a $D$-FLIR over a Krull domain $D$ and let $\vec{x}=(x_1,\dots,x_n)$ be a chart.
  Then there is a commutative diagram
  \[
  \begin{tikzcd}
    0 \ar[r] & H \ar[r,hook] & \Cl(A) \ar[r, "\alpha"] & \Cl(D[\vec{x}^{\pm1}]) \ar[r] & 0\\
             &          &                         & \Cl(D) \ar[u, "\thicksim" {inner sep=.4mm, rotate=90, anchor=north}, "\beta"] \ar[ul, "\phi"] & 
  \end{tikzcd}
  \]
  with $\alpha([I])= \big[ID[\vec{x}^{-1}]\big]$ for $I \in \cF_v(A)$, with $\beta([J]) = \big[J D[\vec{x}^{\pm 1}]\big]$ and $\phi([J])=[(JA)_v]$ for $J \in \cF_v(D)$, and with $H$ generated by the classes of those $P\in \htop(A)$ containing $x_1\cdots x_n$.
  The top row is exact, and the map $\phi \circ \beta^{-1}$ splits the short exact sequence in the top row. In particular,
  \[  
    \Cl(A)\cong \Cl(D)\oplus H.
  \]
\end{lemma}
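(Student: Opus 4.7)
The decisive observation is that, because $\vec{x}$ is a chart of the FLIR $A$, condition \ref{lir:containment} of \cref{d:lir} gives $D[\vec{x}] \subseteq A \subseteq D[\vec{x}^{\pm 1}]$, so that $D[\vec{x}^{\pm 1}] = A[(x_1 \cdots x_n)^{-1}]$ is genuinely a localization of $A$ at the multiplicative set $S = \{(x_1 \cdots x_n)^k : k \ge 0\}$. I would first apply Nagata's Theorem (\cref{t:nagata}) to this localization. The resulting exact sequence has kernel generated by those $P \in \htop(A)$ meeting $S$, which are exactly the primes containing $x_1 \cdots x_n$; this subgroup is $H$. The Nagata map sends $[I]$ to $[I \cdot D[\vec{x}^{\pm 1}]]$, so it coincides with $\alpha$, and the top row of the diagram is exact.

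Next I would show that the two remaining maps are well-defined. The map $\beta$ is the isomorphism from \cref{c:cl-polynomial} applied to the Laurent polynomial ring $D[\vec{x}^{\pm 1}]$. For $\phi$, the extension $D \subseteq A$ satisfies the PDE property by \cref{l:PDE}, so $I \mapsto (IA)_v$ descends from $\cF_v(D)$ to a well-defined group homomorphism $\Cl(D) \to \Cl(A)$ by \cite[Theorem 6.2 and Proposition 6.3]{FOSSUM}.

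Commutativity of the triangle reduces to verifying $\alpha \circ \phi = \beta$. For $[J] \in \Cl(D)$,
\[
  \alpha\circ\phi([J]) = \big[(JA)_v \cdot D[\vec{x}^{\pm 1}]\big] = \big[J D[\vec{x}^{\pm 1}]\big] = \beta([J]),
\]
where the middle equality uses that divisorial closure commutes with localization, as recorded in \cref{sssec:divisors}. Since $\beta$ is an isomorphism, $\alpha \circ (\phi \circ \beta^{-1}) = \mathrm{id}_{\Cl(D[\vec x^{\pm 1}])}$, so $\phi \circ \beta^{-1}$ is a section of $\alpha$. The short exact sequence therefore splits, yielding the direct sum decomposition $\Cl(A) \cong H \oplus \Cl(D)$.

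There is no real obstacle: the result is essentially a direct assembly of Nagata's Theorem, the PDE property from \cref{l:PDE}, and the class-group computation of the Laurent polynomial ring. The only conceptual point worth emphasizing is that property \ref{lir:containment} of the LIR axioms is precisely what allows us to view $D[\vec{x}^{\pm 1}]$ as a localization of $A$ (rather than merely an overring); without this, Nagata would not apply in this clean form.
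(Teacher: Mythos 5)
Your proposal is correct and follows essentially the same route as the paper's proof: Nagata's Theorem applied to the localization $A[(x_1\cdots x_n)^{-1}]=D[\vec x^{\pm 1}]$ for the exact top row, \cref{l:PDE} to get $\phi$, the Laurent-polynomial class group isomorphism for $\beta$, and the verification $\alpha\circ\phi=\beta$ (via compatibility of divisorial closure with localization) to split the sequence. No differences worth noting.
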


\begin{proof}
  Since $A[\vec x^{-1}] = D[\vec{x}^{\pm1}]$ is the localization of $A$ at the multiplicative set generated by $x_1$, \dots,~$x_n$, Nagata's Theorem (\cref{t:nagata}) yields the exact top row of the commutative diagram, with $H$ and $\alpha$ as claimed.
  
  Further, there is an isomorphism $\beta$ given by extension of ideals, that is, for $J \in \cF_v(D)$, we have $\beta([J]) = \big[J D[\vec{x}^{\pm1}]\big]$ \cite[Theorem 8.1]{FOSSUM}.

  Because of \cref{l:PDE}, extension of divisorial fractional ideals induces a homomorphism $\phi\colon \Cl(D)\to \Cl(A)$, given by $\phi([J]) = [(JA)_v]$ for $J \in \cF_v(D)$.
  We verify that $\phi \circ \beta^{-1}$ splits the short exact sequence.
  This means showing $\alpha \circ \phi = \beta$.
  Let $J \in \cF_v(D)$.
  Then
  \[
  \alpha\circ\phi([J]) = \alpha([(JA)_v]) = \big[ (JA)_v D[\vec{x}^{\pm1}] \big] = \big[ (J D[\vec{x}^{\pm1}])_v \big] = \big[ J D[\vec{x}^{\pm1}] \big] = \beta([J]).
  \]

  Since the short exact sequence splits, in particular $\Cl(A) \cong \Cl(D) \oplus H$.
\end{proof}

We can now give an explicit description of $\Cl(A)$.

\begin{theorem} \label{t:class-group-over-krull-domains}
  Let $A$ be a $D$-FLIR over a Krull domain $D$, and let $\vec{x}$ be a chart.
  Then 
  \[  
    \Cl(A)\cong \Cl(D)\oplus \bZ^r \,/\, \langle \vec c_1, \dots, \vec c_n \rangle
  \]
  where $P_1$, \dots,~$P_r$ are the height-one primes that contain $x_1\cdots x_n$ and $\vec{c}_i=(c_{ij})_{1\le j\le r} \in \bZ^{r}$ is such that $x_i A = \vprod_{1 \le j \le r } P_j^{c_{ij}}$. 
\end{theorem}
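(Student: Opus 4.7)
The plan is to leverage \cref{l:class-group-splits}, which already yields $\Cl(A) \cong \Cl(D) \oplus H$, where $H$ is the subgroup of $\Cl(A)$ generated by the classes $[P_1], \dots, [P_r]$. The task therefore reduces to showing $H \cong \bZ^r / \langle \vec c_1, \dots, \vec c_n \rangle$.

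The first step is to introduce the evident surjection $\pi \colon \bZ^r \twoheadrightarrow H$, $e_j \mapsto [P_j]$, and analyze its kernel. A tuple $(n_1, \dots, n_r)$ lies in $\ker \pi$ precisely when the divisor $\sum_j n_j P_j$ equals $\dv_A(a)$ for some $a \in K(\vec x)^\times$. Because the support of this divisor is contained in $\{P_1, \dots, P_r\}$, the element $a$ has zero valuation at every $P \in \htop(A)$ not containing $x_1 \cdots x_n$, which means that $a$ is a unit in the localization $A[\vec x^{-1}]$. The inclusions $D[\vec x] \subseteq A \subseteq D[\vec x^{\pm 1}]$ from \cref{d:lir} identify this localization with $D[\vec x^{\pm 1}]$.

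The standard description of units in a Laurent polynomial ring over a domain then gives $a = c \vec x^{\vec m}$ with $c \in D^\times$ and $\vec m \in \bZ^n$. Since $c \in D^\times \subseteq A^\times$, one has $\dv_A(c) = 0$, so $\dv_A(a) = \sum_{i=1}^n m_i \dv_A(x_i)$. Each $\dv_A(x_i)$ corresponds to the tuple $\vec c_i$ (note that $\val_P(x_i) \ge 0$ for all $P \in \htop(A)$, and positive valuation forces $x_i \in P$, hence $P \in \{P_1, \dots, P_r\}$). This shows $\ker \pi \subseteq \langle \vec c_1, \dots, \vec c_n \rangle$, and the reverse inclusion is immediate from $\dv_A(\vec x^{\vec m}) = \sum_i m_i \vec c_i$. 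Hence $H \cong \bZ^r / \langle \vec c_1, \dots, \vec c_n \rangle$, which combined with the splitting yields the theorem. I do not anticipate a serious obstacle here; the proof is essentially a direct computation on top of \cref{l:class-group-splits}, and the only point requiring minor care is the explicit form of the unit group of $D[\vec x^{\pm 1}]$ when $D$ is merely a Krull domain, which passes through the ambient Laurent polynomial ring over the field of fractions of $D$.
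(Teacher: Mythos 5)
Your proposal is correct and follows essentially the same route as the paper: both reduce to \cref{l:class-group-splits} and then show that a relation $\sum_j m_j[P_j]=0$ forces the corresponding principal divisor (equivalently, divisorial ideal) to come from a unit of $D[\vec x^{\pm 1}]$, i.e.\ from $\varepsilon\vec x^{\vec m}$ with $\varepsilon\in D^\times$, so that the kernel of $\bZ^r\twoheadrightarrow H$ is exactly $\langle \vec c_1,\dots,\vec c_n\rangle$. The only cosmetic difference is that you phrase the computation in the language of divisors while the paper works with $v$-products of divisorial ideals.
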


\begin{proof}
  Let $P_1$, \dots,~$P_r$ be the height-one primes of $A$ containing $x_1\cdots x_n$.
  \Cref{l:class-group-splits} shows $\Cl(A)\cong \Cl(D)\oplus H$ with $H$ generated by $[P_1]$, \dots,~$[P_r]$.
  Therefore, it is sufficient to prove 
  \[
  H\cong \bZ^r \,/\, \langle \vec c_1, \dots, \vec c_n \rangle.
  \]
  The proof is the same as in \cite[Thereom 3.1]{GELS19}.
  We include it for the sake of completeness.
  
  We need to prove that if $\sum_{j=1}^r m_{j}[P_j]=0$ in $H$ for some $m_1$, \dots,~$m_r \in \bZ$, then there exist $u_1$, \dots,~$u_n\in \bZ$ such that $m_j=\sum_{i=1}^n u_i c_{ij}$ for all $i\in[1,r]$.
  Then $(m_1,\dots,m_r) = \sum_{i=1}^n u_i \vec c_i$, and the claim follows.

  So assume $\sum_{j=1}^r m_{j}[P_j]=0$. 
  Then $\vprod_{1 \le j \le r} P_j^{m_j}$ is a fractional principal ideal of $A$, say
  \begin{equation} \label{eq:first-fact-a}
  \vprod_{1 \le j \le r} P_j^{m_j}=a A
  \end{equation}
  for some $0 \ne a \in K(\vec x)$, where $K$ is the field of fractions of $D$.
  Localizing at the multiplicative set generated by $x_1$, \dots,~$x_n$,
  \[
  aA[\vec x^{-1}] = aD[\vec{x}^{\pm 1}] = \vprod_{{\substack{1 \le j \le r  \\ x_1\cdots x_n \not \in P_j}}} (P_j D[\vec x^{\pm 1}])^{m_i}=D[\vec{x}^{\pm 1}],
  \]
  since $x_1\cdots x_n \in P_j$ for all $j\in[1,r]$ by choice of the $P_j$.
  
  We find $a\in D[\vec{x}^{\pm1}]^\times$, and hence $a=\varepsilon x_1^{u_1}\cdots x_n^{u_n}$ for some $\varepsilon\in D^\times\subseteq A^\times$ and $u_1,\ldots,u_n\in \bZ$.
  Substituting the factorizations of $x_iA$, we get
  \begin{equation} \label{eq:second-fact-a}
  aA = \vprod_{1\le i \le n}\Big(\ \vprod_{1\le j \le r}P_j^{c_{ij}}\Big)^{u_i}=\vprod_{1\le j\le r}P_j^{\sum_{i=1}^{n}u_ic_{ij}}.
  \end{equation}
  Comparing the exponents of $P_j$ in the two factorizations of $a$ into height-one primes in \cref{eq:first-fact-a} and \cref{eq:second-fact-a}, we obtain $m_j=\sum_{i=1}^{n}u_ic_{ij}$ for all $j\in[1,r]$, as desired.
\end{proof}

\begin{remark}
  The class group of a FLIR need not be torsion-free, even for factorial $D$: for example, full rank generalized upper cluster algebras are FLIRs by \cite[Theorem 3.11]{GSV18}, and their class group can also have torsion \cite[Examples 4.13]{Pom25b}.
  Similarly, we will see below that one-variable FLIRs may have torsion in their class group (\cref{exm:flir-1-var}).
  On the other hand, the class group of an upper cluster algebra that is a Krull domain is always torsion-free \cite[Theorem 4.4]{Pom25a}.
\end{remark}

To understand the factorization theory of a Krull domain globally (that is, its arithmetic invariants), it is not sufficient to know its class group, but one also needs to know how many height-one prime ideals lie in each class.
Then the multiplicative monoid of $A$ modulo $A^\times$ is uniquely determined up to isomorphism \cite[Theorem 2.5.4.3]{GH06}.

For cardinality reasons, there are at most $\min\{\card{D}, \aleph_0\}$ many height-one prime ideals in $A$.
The following shows that this upper bound is attained in each class, thereby giving a complete description of the distribution of prime divisors in the class group of a FLIR over a Krull domain.
This is analogous to the situation in Krull monoid algebras \cite{FW22}.

\begin{theorem} \label{t:many-primes}
  If $A$ is a FLIR in $n \ge 1$ variables over a Krull domain $D$, then every class of $\Cl(A)$ contains $\min\{\card{D}, \aleph_0\}$ height-one prime ideals.
\end{theorem}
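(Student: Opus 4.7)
Fix a chart $\vec x = (x_1, \ldots, x_n)$ for $A$, so that $D[\vec x] \subseteq A \subseteq D[\vec x^{\pm 1}]$. \Cref{l:class-group-splits} gives the splitting $\Cl(A) \cong \Cl(D) \oplus H$, with $H$ generated by the classes of the finitely many primes $P_1, \ldots, P_r$ of $A$ containing $x_1 \cdots x_n$, together with the bijection $\htop(A) \setminus \{P_1, \ldots, P_r\} \leftrightarrow \htop(D[\vec x^{\pm 1}])$ via contraction and extension. For a target class $c = \phi(d) + h \in \Cl(A)$, the plan is to construct $\min\{\lvert D\rvert, \aleph_0\}$ pairwise distinct non-exceptional primes of $A$ in class $c$, realized as the contractions of principal primes $Q_\lambda = (f_\lambda)K[\vec x^{\pm 1}] \cap D[\vec x^{\pm 1}]$ for a family of irreducible Laurent polynomials $f_\lambda \in D[\vec x^{\pm 1}]$ parameterized by $\lambda$ in a subset of $D$ of the required cardinality.

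For each $f_\lambda$, principality of $\dv_A(f_\lambda)$ combined with the description of $\htop(D[\vec x^{\pm 1}])$ in \cref{sssec:div-poly} yields a formula for $[Q_\lambda \cap A] \in \Cl(A)$ in terms of the content $c(f_\lambda) \in \cF_v(D)$, which controls the $\Cl(D)$-part, and the exceptional valuations $(v_{P_j}(f_\lambda))_j \in \bZ^r$, which control the $H$-part modulo the relations $\langle \vec c_1, \ldots, \vec c_n \rangle$ identified in \cref{t:class-group-over-krull-domains}. I would thus choose $f_\lambda$ so that $[c(f_\lambda)] = -d$ in $\Cl(D)$---arranged by taking the coefficients of $f_\lambda$ from generators of a divisorial ideal representing $-d$---and so that the exceptional valuation vector represents $-h$ in $H$, arranged by designing the monomial support of $f_\lambda$ appropriately and using the Approximation Theorem in the Krull domain $A$ \cite[Theorem~5.8]{FOSSUM}. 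Crucially, multiplying $f_\lambda$ by a monomial unit in $K[\vec x^{\pm 1}]^\times$ shifts its exceptional valuation vector by an element of $\langle \vec c_1, \ldots, \vec c_n \rangle$ but leaves the prime $Q_\lambda$ unchanged; hence the $H$-component of $[Q_\lambda \cap A]$ depends only on the prime, and every element of $H$ is realizable by a suitable choice of the underlying monomial support of $f_\lambda$.

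The main obstacle is the combinatorial bookkeeping needed to simultaneously enforce irreducibility of $f_\lambda$ in $K[\vec x^{\pm 1}]$, the correct content class, the correct exceptional valuation profile, and pairwise distinctness of the resulting primes across a family of size $\min\{\lvert D\rvert, \aleph_0\}$. When $D$ is infinite, irreducibility and distinctness for generic $\lambda$ follow from Bertini-type genericity arguments over the infinite field $K$ applied to a one-parameter family; when $D$ is finite, one uses that $K[\vec x^{\pm 1}]$ contains infinitely many irreducibles from which the required $\lvert D\rvert$ many can be extracted with the prescribed content and exceptional profile. The overall strategy parallels the proof for Krull monoid algebras in \cite{FW22}, which demonstrates that the blueprint of parameterized families of irreducibles with prescribed content and exceptional valuations is viable in considerable generality and adaptable to our FLIR setting.
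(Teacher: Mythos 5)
Your overall blueprint coincides with the paper's (both modeled on \cite{FW22}): split $\Cl(A)\cong\Cl(D)\oplus H$ via \cref{l:class-group-splits}, and realize a prescribed class by contracting primes $gK[\vec x^{\pm 1}]\cap A$ for a large family of irreducible Laurent polynomials $g$, with the content of $g$ controlling the $\Cl(D)$-part and the valuations of $g$ at the exceptional primes $P_1,\dots,P_t$ over $x_1\cdots x_n$ controlling the $H$-part. The genuine gap is that you never construct such a family: the step you yourself call ``the main obstacle'' --- simultaneously enforcing irreducibility in $K[\vec x^{\pm 1}]$, a prescribed content class, prescribed values $\val_{P_i}(g)=e_i$, and pairwise non-associativity for $\min\{\card{D},\aleph_0\}$ many $g$ --- is dispatched by an appeal to ``Bertini-type genericity'' when $D$ is infinite and, when $D$ is finite, by the assertion that enough irreducibles ``with the prescribed content and exceptional profile'' can be extracted, which is a restatement of the claim rather than an argument. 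This is precisely where the paper does its work, in \cref{l:goodgens} and the three-case \cref{l:many-irred}.

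The genericity appeal does not obviously go through, because the constraints are in tension: fixing the content class forces the coefficients of $g$ into specific fractional ideals (one needs, as in \cite[Lemma 3.2]{FW22}, that the coefficients generate $I^{-1}$ up to divisorial closure), the exceptional valuations $\val_{P_i}$ are invisible in $K[\vec x^{\pm 1}]$ and must be arranged through the structure of $A$ itself, and a generic member of such a constrained pencil need not be irreducible. Moreover, the relevant dichotomy is not ``$D$ finite vs.\ infinite'' but ``$\card{\htop(D)}$ finite (whence $D$ is a semilocal PID, hence factorial) vs.\ infinite''; the paper's cases are organized around this. Irreducibility is obtained there from Eisenstein's criterion: at an auxiliary prime of $K[x_2^{\pm 1},\dots,x_n^{\pm 1}]$ when $n\ge 2$ and $D$ is factorial, and, when $\card{\htop(D)}=\infty$, at a prime $P\in\htop(D)$ supplied by \cref{l:goodgens}, which simultaneously provides generators $a$,~$b$ of $I^{-1}$ with $\val_P(a)=\val_P(b)+1$ and $\val_{P_i}(a)=0$; the prescribed exceptional valuations come from an auxiliary $f\in A$ produced by the Approximation Theorem together with a large power of $x_1\cdots x_n$. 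The one-variable case is a separate difficulty your sketch does not touch: with $n=1$ there is no room for an auxiliary prime in the remaining variables, and one needs the classification $A=D[x]$ or $A=D[x,a/x]$ of \cref{p:flir-1-var}, plus the explicit identification of the exceptional primes via the second chart $a/x$, before an Eisenstein family such as $\pi_1^{e_1-1}x^N+\pi_1^{e_1}cx^{N-1}+b$, parameterized by $c\in D$, can be written down. Without these ingredients the proposal remains a plan rather than a proof.
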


To study the factorization of any given element $a \in A$, it is however not necessary to know this distribution: it is sufficient to work with the finitely many height-one prime ideals containing $a$.
We therefore postpone the somewhat lengthy proof of \cref{t:many-primes} to \cref{s:distribution}.
\subsection{Iterated FLIRs}
\label{s:iterated-flirs}
Iterated FLIRs will be useful in \cref{s:cluster-algebras} and \cref{s:final-observations} to deal with isolated cluster algebras and freezings of cluster algebras.

\begin{definition}
  Let $D$ be a domain.
  A ring $A$ is an \defit{iterated LIR} over $D$, respectively an \defit{iterated FLIR} over $D$, if there exist rings $D = A_0 \subseteq A_1 \subseteq \cdots \subseteq A_r = A$ such that each $A_i$ is an $A_{i-1}$-LIR, respectively an $A_{i-1}$-FLIR, for $i \in [1,r]$.
\end{definition}

The important observation is that iterating the LIR construction still yields a LIR over the original ground ring.

\begin{proposition}\label{p:iterated-lir}
  If $A$ is an iterated LIR over $D$, then it is a $D$-LIR.
  More specifically, if $S_i$ is a system of charts for $A_i$ over $A_{i-1}$ for each $i \in [1,r]$, then
  \[
    S = \{\, \big(\vec x(1), \vec x(2), \dots, \vec x(r)\big) : \vec x(i) \in S_i \text{ for } i \in [1,r] \,\} \cong S_1 \times \dots \times S_r.
  \]
  is a system of charts for $A$ over $D$.
\end{proposition}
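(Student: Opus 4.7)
The plan is to induct on $r$, with only the case $r=2$ requiring actual work. Assume $A_1$ is a $D$-LIR with chart system $S_1$ and $A_2=A$ is an $A_1$-LIR with chart system $S_2$; write $K$ for the field of fractions of $D$ and $K_1$ for the field of fractions of $A_1$. I will verify the three defining properties in \cref{d:lir} for $S=S_1\times S_2$, with pairs written $(\vec y,\vec z)$, as a candidate system of charts for $A$ over $D$.

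Property \ref{lir:algindep} follows by concatenation: $K(\vec y)=K(\vec x(1))=K_1$ by the chart property of $\vec y \in S_1$, and $K_1(\vec z)=K_1(\vec x(2))$ by the chart property of $\vec z \in S_2$, hence $K(\vec y,\vec z)=K(\vec x(1),\vec x(2))$. Property \ref{lir:containment} is immediate from the successive inclusions $D[\vec y]\subseteq A_1$ and $A_1[\vec z]\subseteq A$, yielding $D[\vec y,\vec z]\subseteq A$.

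The main step is property \ref{lir:intersection}, namely
\[
  A = \bigcap_{(\vec y,\vec z)\in S_1\times S_2} D[\vec y^{\pm 1},\vec z^{\pm 1}].
\]
I carry this out in two stages. Fix $\vec z \in S_2$; any $f \in K(\vec x(1),\vec x(2))=K_1(\vec z)$ admits a unique Laurent expansion $f=\sum_{\vec m} a_{\vec m}\vec z^{\vec m}$ with $a_{\vec m}\in K_1$, by algebraic independence of $\vec z$ over $K_1$. Crucially, both $K_1$ and the coefficients $a_{\vec m}$ are independent of the choice of $\vec y$. Since $D[\vec y^{\pm 1},\vec z^{\pm 1}]$ is the free $D[\vec y^{\pm 1}]$-module on the Laurent monomials in $\vec z$, we have $f \in D[\vec y^{\pm 1},\vec z^{\pm 1}]$ precisely when $a_{\vec m}\in D[\vec y^{\pm 1}]$ for every $\vec m$. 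Intersecting over $\vec y \in S_1$ and invoking the LIR property of $A_1$ over $D$ then yields $\bigcap_{\vec y} D[\vec y^{\pm 1},\vec z^{\pm 1}] = A_1[\vec z^{\pm 1}]$. A second intersection over $\vec z \in S_2$ uses the LIR property of $A_2$ over $A_1$ to produce $A$, as required.

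The induction step from $r-1$ to $r$ is then a formal application of the $r=2$ argument to $(D, A_{r-1}, A_r)$, using the induction hypothesis that $A_{r-1}$ is a $D$-LIR with chart system $S_1\times\cdots\times S_{r-1}$. The only genuinely delicate point is the one flagged in the intersection step: although $\vec y$ varies, the coefficients $a_{\vec m}$ of the Laurent expansion in $\vec z$ live in the fixed field $K_1$, so the intersection acts coordinatewise on these expansions. After this observation is made, the rest is bookkeeping.
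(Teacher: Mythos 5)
Your proof is correct and follows essentially the same route as the paper: induction on $r$ reducing to the two-step case, with the key point being that the intersection $\bigcap_{\vec y} D[\vec y^{\pm 1}]$ commutes with adjoining the Laurent monomials in $\vec z$ (you spell out the coefficient-comparison argument that the paper's chain of equalities leaves implicit). The only slip is the claim that \emph{every} $f \in K_1(\vec z)$ has a finite Laurent expansion — it does not, but this is harmless since you only need the expansion, and its uniqueness over $K_1$, for elements already lying in some $D[\vec y^{\pm 1},\vec z^{\pm 1}]$.
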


\begin{proof}
  We prove the claim by induction on $r$.
  The case $r=1$ is trivial.
  So assume $r>1$ and that the claim holds for $r-1$.
  Then $A_{r-1}$ is a $D$-LIR with system of charts $T\cong S_1 \times \dots \times S_{r-1}$ and $A$ is an $A_{r-1}$-LIR with system of charts $S_r$.
  Then
  \[
  A = \bigcap_{\vec{x}\in S_r}A_{r-1}[\vec x^{\pm 1}] = \bigcap_{\vec{x}\in S_r} \Big(\bigcap_{\vec{y}\in T}D[\vec y^{\pm 1}]\Big)[\vec{x}^{\pm1}]=\bigcap_{(\vec{y},\vec{x})\in T \times S_r}D[\vec{y}^{\pm 1}, \vec{x}^{\pm 1}].
  \]
  This shows \ref{lir:intersection} of \cref{d:lir}.
  To check \ref{lir:algindep} and \ref{lir:containment} of \cref{d:lir}, let $\vec{x}\in S_r$ and $\vec{y}\in T$.
  Then
  \[
  K(\vec{x},\vec{y})=K(\vec{y})(\vec{x})\qquad\text{and}\qquad D[\vec{y},\vec{x}] = D[\vec y][\vec x] \subseteq A_{r-1}[\vec x] \subseteq A. \qedhere
  \]
\end{proof}

We obtain the following consequence, by using finite sets $S_i$.

\begin{corollary} \label{c:iterated-flir}
  If $A$ is an iterated FLIR over a domain $D$, then it is a $D$-FLIR.
\end{corollary}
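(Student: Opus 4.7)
The plan is to combine \cref{p:iterated-lir} with the observation that finite products of finite sets are finite. By definition of an iterated FLIR, there is a chain
\[
  D = A_0 \subseteq A_1 \subseteq \cdots \subseteq A_r = A
\]
such that each $A_i$ is an $A_{i-1}$-FLIR. First, I would invoke the FLIR property at each level to pick a \emph{finite} system of charts $S_i$ for $A_i$ over $A_{i-1}$, for $i \in [1,r]$.

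Next, I would apply \cref{p:iterated-lir} to the chain, which shows that $A$ is a $D$-LIR with system of charts $S \cong S_1 \times \cdots \times S_r$ obtained by concatenating tuples. Since each $S_i$ is finite, the product $S$ is finite as well, giving a finite system of charts for $A$ over $D$. Therefore $A$ is a $D$-FLIR.

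There is no real obstacle: the work was already done in \cref{p:iterated-lir}, and the corollary amounts to tracking the finiteness assumption through the inductive construction there.
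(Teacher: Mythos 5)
Your proof is correct and matches the paper's argument: the paper derives the corollary from \cref{p:iterated-lir} precisely "by using finite sets $S_i$", so the product system of charts is finite. Nothing further is needed.
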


Since a Laurent polynomial ring is trivially a FLIR, we also obtain the following.

\begin{corollary} \label{c:lir-over-laurent}
  Let $D[\vec t^{\pm 1}]$ be a Laurent polynomial ring in a finite number of variables over a domain $D$.
  If $A$ is a $D[\vec t^{\pm 1}]$-FLIR with system of charts $S$, then $A$ is a $D$-FLIR with system of charts
  \[
  \{\, (\vec t, \vec x) : \vec x \in S \,\}
  \]
\end{corollary}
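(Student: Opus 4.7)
The plan is to deduce this immediately from \cref{c:iterated-flir} (and its proof \cref{p:iterated-lir}) by exhibiting the chain $D \subseteq D[\vec t^{\pm 1}] \subseteq A$ as an iterated FLIR.

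First I would observe that $D[\vec t^{\pm 1}]$ is trivially a $D$-FLIR with the singleton system of charts $S_1 = \{\vec t\}$: condition~\ref{lir:algindep} is immediate since $\vec t$ generates its own fraction field $K(\vec t)$ over $K$, condition~\ref{lir:intersection} reduces to $D[\vec t^{\pm 1}] = D[\vec t^{\pm 1}]$, and condition~\ref{lir:containment} to $D[\vec t] \subseteq D[\vec t^{\pm 1}]$. Together with the hypothesis that $A$ is a $D[\vec t^{\pm 1}]$-FLIR with finite system of charts $S_2 = S$, this exhibits $A$ as a $2$-step iterated FLIR $D \subseteq D[\vec t^{\pm 1}] \subseteq A$.

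Now \cref{c:iterated-flir} tells us that $A$ is a $D$-FLIR, and \cref{p:iterated-lir} pins down its system of charts as $S_1 \times S_2 = \{\vec t\} \times S$, that is, the tuples $(\vec t, \vec x)$ with $\vec x \in S$. This is exactly the system of charts claimed in the statement.

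There is essentially no obstacle; the only point worth double-checking is that the concatenated tuple $(\vec t, \vec x)$ satisfies the three defining conditions of \cref{d:lir} directly, which is a sanity check on \cref{p:iterated-lir} applied in this special case: algebraic independence of $(\vec t, \vec x)$ follows from the fact that $\vec x$ generates the fraction field of $A$ over the fraction field of $D[\vec t^{\pm 1}]$ and $\vec t$ generates the latter over $K$; the intersection identity is the computation $A = \bigcap_{\vec x \in S} D[\vec t^{\pm 1}][\vec x^{\pm 1}] = \bigcap_{\vec x \in S} D[\vec t^{\pm 1}, \vec x^{\pm 1}]$; and containment $D[\vec t, \vec x] \subseteq D[\vec t^{\pm 1}][\vec x] \subseteq A$ follows since $A$ is a $D[\vec t^{\pm 1}]$-LIR. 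Hence no additional work beyond invoking the preceding corollary is required.
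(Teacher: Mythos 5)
Your proposal is correct and follows exactly the route the paper intends: the corollary is stated there with the one-line justification that a Laurent polynomial ring is trivially a FLIR (with single chart $\vec t$), so that $D \subseteq D[\vec t^{\pm 1}] \subseteq A$ is an iterated FLIR and \cref{p:iterated-lir} (via \cref{c:iterated-flir}) yields the product system of charts $\{\vec t\} \times S$. Your direct sanity check of the three conditions of \cref{d:lir} merely re-runs the proof of \cref{p:iterated-lir} in this special case, so nothing is missing.
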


\subsection{One-variable LIRs} \label{s:1-var-lir}

We discuss the special case of one-variable LIRs, where a complete classification is possible.
This simple case provides explicit examples of LIRs and sheds some light on the difference between LIRs and upper cluster algebras.
In addition, the classification of the one-variable case is used in \cref{s:distribution}.

\begin{proposition} \label{p:flir-1-var}
  If $A$ is a $D$-LIR over a domain $D$ in $n=1$ variables, then $A \cong D[x]$ or $A \cong D[x^{\pm 1}] \cap D[(a/x)^{\pm 1}] = D[x,a/x]$ with $0 \ne a \in D$.
\end{proposition}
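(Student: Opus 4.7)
The plan is to fix a system of charts $S$ for $A$, classify the individual charts in terms of a preferred variable $x$, work out pairwise compatibility constraints, and then read off $A$ directly from the defining intersection.

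After a change of variables I may assume that some chart equals $x$, so $A \subseteq D[x^{\pm 1}]$. For any other chart $y \in S$, the inclusions $D[y] \subseteq A \subseteq D[x^{\pm 1}]$ and $D[x] \subseteq A \subseteq D[y^{\pm 1}]$ force $y$ to be a Laurent polynomial in $x$ over $D$, and simultaneously $x$ to be a Laurent polynomial in $y$ over $D$. Since $K(y)=K(x)$ the rational function $y$ has degree one, so a short analysis of its Laurent support forces $y = \alpha x + \beta$ or $y = \alpha/x + \beta$ with $0 \ne \alpha \in K$, $\beta \in K$. Substituting into the expression of $x$ as a Laurent polynomial in $y$, and using that the units of $D[y^{\pm 1}]$ are precisely the monomials $u\,y^k$ with $u \in D^\times$, pins down $\alpha \in D^\times$ and $\beta \in D$ in the first ``type (a)'' case, and $\beta = 0$ together with $\alpha \eqqcolon a \in D \setminus \{0\}$ in the second ``type (b)'' case.

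The main obstacle is then the pairwise compatibility between charts of different types, i.e., the requirement $D[y] \subseteq D[y'^{\pm 1}]$ for all $y, y' \in S$. The key observation is to look at pole loci: an element of $D[(\alpha x + \beta)^{\pm 1}]$ has its unique finite pole at $x = -\beta/\alpha$, whereas $a/x^k$ has its unique finite pole at $x = 0$. Applied to a type-(b) chart $a/x$ and a type-(a) chart $\alpha x + \beta$, this forces $\beta = 0$. Applied to two type-(b) charts $a/x$ and $a'/x$, writing $a/x = (a/a') \cdot (a'/x)$ and symmetrically forces $a/a', a'/a \in D$, so $a$ and $a'$ are associates in $D$. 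Since type-(a) charts with $\beta = 0$ satisfy $D[(\alpha x)^{\pm 1}] = D[x^{\pm 1}]$ and so contribute nothing new to the intersection, the system $S$ reduces without loss of generality to one of three normal forms: $\{x\}$; $\{x, \alpha x + \beta\}$ with $\beta \ne 0$; or $\{x, a/x\}$ with $0 \ne a \in D$.

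It then remains to compute $A$ in each normal form. The first gives $A = D[x^{\pm 1}] = D[x, 1/x]$. The second gives $A = D[x]$, since $x$ and $\alpha x + \beta$ are coprime in $K[x]$ when $\beta \ne 0$, so any element of $D[x^{\pm 1}] \cap D[(\alpha x + \beta)^{\pm 1}]$ is already a polynomial in $x$ with coefficients in $D$. The third gives $A = D[x, a/x]$ by a direct Laurent-coefficient calculation: an element $\sum_i c_i x^i \in D[x^{\pm 1}]$ lies in $D[(a/x)^{\pm 1}]$ precisely when $c_{-j} \in a^j D$ for all $j > 0$, which is exactly the $D$-module description of $D[x, a/x]$. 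In all three cases $A$ is isomorphic to $D[x]$ or to $D[x, a/x]$ for some $0 \ne a \in D$.
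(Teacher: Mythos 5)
Your proof is correct and follows essentially the same route as the paper: fix the chart $x$, classify every other chart as $ux+a$ with $u \in D^\times$, $a \in D$, or as $a/x$ with $0 \ne a \in D$, note that distinct $a/x$-charts have associated $a$'s, and compute the two resulting intersections by comparing Laurent coefficients. The differences are cosmetic: you get the chart classification from $y$ being a degree-one rational function (instead of the paper's substitution and order-of-vanishing computation), and the unit argument forcing $\beta=0$ in the $a/x$ case is cleanest when run in $K[y^{\pm 1}]$, where every nonzero constant is a unit, rather than in $D[y^{\pm 1}]$.
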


In the first case, the polynomials $x$ and $x+1$ form a system of charts for $A$.
In the second case, the Laurent polynomials $x$ and $a/x$ form a system of charts.

\begin{proof}
  Fix a chart $x$, and consider another chart $y \in D(x)$.
  We first show that only the two cases
  \[
  y=a/x\qquad  \text{or}\qquad y=ux+a \qquad\text{with $a \in D$ and $u \in D^\times$}
  \]
  can occur.

  Since $y \in D[x^{\pm 1}]$ and $x \in D[y^{\pm 1}]$, we have $y = f(x)/x^m$ and $x = g(y)/y^n$ for some nonzero $f$,~$g \in D[t]$ and $m$,~$n \in \bZ$ with $t \nmid f$ and $t \nmid g$.

  Substituting the expressions into each other, we get
  \begin{equation*}
  x = \frac{g(f(x)/x^m)}{(f(x)/x^m)^n}.
  \end{equation*}
  If $m \ne 0$, then the order of vanishing at $0$ on the right-hand side is $m(n-d)$ where $d = \deg(g)$.
  Since $m(n-d)=1$, we have $(m,n-d)=(1,1)$ or $(m,n-d)=(-1,-1)$.

  We distinguish three cases.
  \begin{itemize}
  \item If $m=0$, then $y=f(x)$. From $y=f(g(y)/y^n)$ we easily find that also $n=0$, and so $x=g(y)$.
    Thus, $\deg(f)=\deg(g)=1$, and so $y=ux+a$ with $a \in D$ and $u \in D^\times$.

  \item
   If $m=1$ and $n=d+1$, then
   \[
    f(x)^{d+1} = \sum_{i=0}^d g_i f(x)^i x^{d-i} \qquad\text{with}\qquad g(t) = \sum_{i=0}^d g_i t^i.
   \]
   If $\deg(f) \ge 1$, this is impossible by comparing degrees.
   So $f$ is constant, and $y=a/x$ with $a \in D$.

   \item If $m=-1$ and $n=d-1$, then
   \[
   x^{n+1} f(x)^n = g(f(x)x).
   \]
   Since $g(0)\ne 0$, the right-hand side has order $0$ at $0$, which gives $n=-1$.
   Looking at the equation again, now yields that $f$ and $g$ are constant, hence $y=ux$ with $u \in D^\times$.
  \end{itemize}

  If there is a chart of the form $y=ux+a$ with $a \ne 0$, then $D[x^{\pm 1}] \cap D[y^{\pm 1}] = D[x]$.
  Thus, we may assume all the charts, except for $x$ itself, are of the form $y_i = a_i/x$ with $a_i \in D$.
  We may also assume $a_i \not \in D^\times$, since otherwise $D[x^{\pm 1}] = D[y_i^{\pm 1}]$.

  Now we claim that if there are two such charts $y_1$ and $y_2$ (in addition to $x$), then $a_1 \in a_2 D^\times$.
  Indeed, we have 
  \[
  y_1 = \frac{a_1}{x} = \frac{a_1}{a_2} y_2 \qquad\text{and}\qquad y_2 = \frac{a_2}{x} = \frac{a_2}{a_1} y_1.
  \]
  Since $y_1 \in D[y_2^{\pm 1}]$ and $y_2 \in D[y_1^{\pm 1}]$, it follows that $a_1/a_2$, $a_2/a_1 \in D$, hence $a_1 \in a_2 D^\times$.
  We conclude $D[y_i^{\pm 1}] = D[y_j^{\pm 1}]$ for all $i$,~$j$.
  Therefore, we have $A= D[x^{\pm 1}] \cap D[(a/x)^{\pm 1}]$ with a chart $y=a/x$ for some $a \in D$.

  Finally, to see $D[x^{\pm 1}] \cap D[(a/x)^{\pm 1}] = D[x,a/x]$, consider $f = \sum_{i=-m}^n f_i x^i \in D[x^{\pm 1}]$ with $f_i \in D$.
  Then $f \in D[(a/x)^{\pm 1}]$ if and only if $a^{i}$ divides $f_{-i}$ whenever $i > 0$.
  It follows that $f \in D[x,a/x]$.
\end{proof}

\begin{example}
  Note that $\{x,a/x\}$ is a system of charts for $A=D[x^{\pm 1}] \cap D[(a/x)^{\pm 1}]$, but if $a \not \in D^\times$, then $\{x,x/a\}$ is not!
  While $D[(a/x)^{\pm 1}]=D[(x/a)^{\pm 1}]$, the definition of a LIR requires in addition that $D[y] \subseteq A$ for each chart $y$.
  We have $a/x \in D[x^{\pm 1}]$, but $x/a \not \in D[x^{\pm 1}]$ if $a \not \in D^\times$.
\end{example}

While cluster algebras in one variable (of geometric type, without coefficients) are trivial to deal with, as only $D[x,2/x]$ (one isolated vertex) arises, FLIRs in one variable can already be non-factorial even when the base ring $D$ is factorial, as the following example shows.

\begin{example} \label{exm:flir-1-var}
  The ring $A=\bZ[x,2/x]$ is the cluster algebra arising from a single mutable variable.
  It is factorial (say, by the criterion in \cite{GELS19} for acyclic cluster algebras).

  The ring $B=\mathbb Z[x,6/x]$ is not a cluster algebra and is not factorial:
  \[
  6 = 2 \cdot 3 = x \cdot \frac{6}{x}
  \]
  are two distinct factorizations of $6$ into atoms in $B$.
  By inspecting the proof of Case 1 of \cref{l:many-irred} below, or by a short direct computation, one sees that $B$ has two height-one prime ideals lying over $(x)$, namely $(x,2)$ and $(x,3)$.
  Furthermore, since $(x) = (x,2) \cdot (x,3)$, we see $\Cl(B) \cong \bZ^2/\langle (1,1)\rangle \cong \bZ$ from \cref{t:class-group-over-krull-domains}.

  More generally, if $a=p_1^{r_1} \cdots p_t^{r_t}$ is the prime factorization of some $a \in \bZ_{\ge 2}$,
  and $C = \mathbb Z[x,a/x]$, then $C$ has $t$ height-one prime ideals lying over $(x)$, namely $(x,p_i)$ for $i \in [1,t]$.
  Considering the localization $C' \coloneqq C[x/a]=\bZ[(a/x)^{\pm 1}]$ at $a/x$ and using $xC'=aC'$, shows $(x) = (x,p_1)^{r_1} \cdots_v (x,p_t)^{r_t}$ as divisorial product.\footnote{We need not have $(x)=(x,p_1)^{r_1} \cdots (x,p_t)^{r_t}$ without taking the divisorial closure, as can be seen with a short computation for $a=4$.}
  From \cref{t:class-group-over-krull-domains}, we conclude $\Cl(C) \cong \bZ^t / \langle (r_1, \dots, r_t) \rangle \cong \bZ^{t-1} \oplus \bZ/(d)$ where $d=\gcd(r_1, \dots, r_t)$.

  The description of height-one primes can be extended to one-variable FLIRs over Krull domains $D$, see \cref{l:1-flir-primes}.
\end{example}

\section{Computable Krull Domains}\label{s:computable-domains}

In this section we show that over suitable base rings $D$, such as $\bZ$, $\bF_q$, $\bQ$, $\algc{\bQ}$, as well as polynomial rings and fields of rational functions over these rings, it is possible to compute the class group of a $D$-FLIR as well as all factorizations of a given element into atoms (up to order and associativity). 

To treat the various base rings of interest in a uniform way, we introduce the notion of a computable Krull domain.
This entails that we can represent the elements of the domain on an idealized computer (e.g., a Turing machine), and that there are algorithms to perform the basic operations as well as to compute the class group.

In computability theory, several closely related formalizations of computable structures (computable fields, computable groups, etc.) have been proposed by different authors, for instance by Fröhlich and Shepherdson \cite{froehlich-shepherdson56}, by Rabin \cite{rabin60}, and by Mal'cev \cite{malcev61,malcev62}.
A gentle introduction can be found in \cite{miller08}; a recent textbook covering the area is \cite{downey-melnikov26}.
We follow this approach, without going into the details of computability theory, as our main focus is on the algebraic aspects.

We recall that a \defit{computable field} is a field $K$ with the following properties.
As a set, it is computably enumerable: the elements of $K$ are thought of as a subset of $\bZ_{\ge 0}$, such that there is an algorithm that lists all elements $a_0$, $a_1$,~\dots\ of $K$.
More precisely, given as input $i \in \bZ_{\ge 0}$, the algorithm eventually halts with output the field element $a_i$.
Further, there are algorithms to perform addition and multiplication in $K$: for instance, there is an algorithm that, given as input $i$ and $j \in \bZ_{\ge 0}$ (representing the field elements $a_i$ and $a_j$), outputs $k$ such that $a_i + a_j = a_k$.

Negation and inversion are then computable as well by an enumeration procedure: given $i \in \bZ_{\ge 0}$, one can search for $k$ such that $a_i + a_j = 0$ (respectively, such that $a_i a_j = 1$) by successively trying all elements of $K$.\footnote{The special element $0$ can be computed by enumeration as the only solution to $a_i+a_i=a_i$. Similarly, the element $1$ is the only solution of $a_i a_i = a_i$ with $a_i + a_i \ne a_i$.} As should be obvious from this example, the notion of computability makes no claims about the efficiency of the algorithms.

A computable field is in particular finite or countable, and given any two elements $a$,~$b$ of the field, one can decide whether $a=b$ or $a \ne b$.
Obvious examples of computable fields are $\bQ$ and $\bF_q$.
Non-examples are $\bR$ and $\bC$, as these fields are uncountable.
Less obviously, fields of rational functions $K(\vec x)$ \cite[\S42]{vdwaerden50} \cite[\S52--\S59]{edwards84} and algebraic closures $\algc{K}$ \cite{rabin60} over computable fields $K$ are again computable.
Computable rings, domains, and groups can be defined similarly.

For computable Krull domains, we will additionally need to be able to compute with divisors and their classes.
A divisor is a formal $\bZ$-linear combination of prime divisors.
While divisors correspond exactly to divisorial fractional ideals (see \cref{sssec:divisors}), the representation as a formal sum lends itself better to the computational aspects.
This is why we emphasize the divisor point of view in this section. 

A divisor $e_1 P_1 + \cdots + e_1 P_t$ is represented on a computer as a tuple $\big((e_1,P_1),\ldots,(e_t,P_t)\big)$ where $e_i \in \bZ$ and the $P_i$ are prime divisors of $D$.
The only non-trivial requirement here is that we need to have a way of representing the prime divisors in such a way that, given two prime divisors $P$ and $Q$, we can decide whether $P=Q$ or $P \ne Q$.
We enforce this by requiring that the set of prime divisors of $D$ is also computably enumerable.

This does not appear to be sufficient to allow us to compute the class group or the class of a divisor.
Hence, we add a corresponding assumption.

\begin{definition}\label{d:computable-domain}
  A \defit{computable Krull domain} is a Krull domain $D$ such that $D$ is computably enumerable,
  \begin{propenumerate}[label={\textup{(C\arabic*)}},leftmargin=1cm]
    \item\label{comp: operations} addition and multiplication in $D$ are computable,
    \item\label{comp: enum-divisors} the set $\htop(D)$ of prime divisors is computably enumerable,
    \item\label{comp: valuations} given $a \in D$, it is possible to compute the principal divisor $\dv(a)$, and
    \item\label{comp: class-group-D} the class group $\Cl(D)$ and the homomorphism $\Div(D) \to \Cl(D)$ are computable.
  \end{propenumerate}
\end{definition}

The last property means that, given a divisor $E$ of $D$, we can compute its class $[E]$ in $\Cl(D)$, and that, given two classes $[E]$,~$[E']$, we can decide whether $[E]=[E']$ and can compute $[E]+[E']$.

If $D$ is a computable Krull domain, then its field of fractions $K$ is a computable field, by the usual pair construction.
Given a computable field $K$, there is in general no algorithm to decide whether a given polynomial in $K[x]$ is irreducible or not \cite{froehlich-shepherdson56}.

A field with \defit{splitting algorithm} is a computable field $K$ such that there is an algorithm to decide whether a given polynomial in $K[x]$ is irreducible or not (equivalently, by enumeration, for every polynomial a factorization into irreducible factors is computable).
Already Kronecker showed that if $K$ has a splitting algorithm, then so does the field of rational functions $K(\vec x)$.
He also showed the following (transcribed into modern terminology).

\begin{proposition}
  If $K$ is a computable field with splitting algorithm and $f \in K[\vec x]$, it is possible to compute a factorization of $f$ into irreducible factors.
\end{proposition}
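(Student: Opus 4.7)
The plan is to reduce the multivariate factorization problem to univariate factorization by a Kronecker substitution, so that the hypothesis (splitting algorithm for $K[y]$) can be invoked as a black box. I would first handle the trivial cases: if $f$ is zero or lies in $K^\times$, nothing is to do. Otherwise, I fix an integer $d$ strictly larger than every $\deg_{x_i}(f)$, and consider the $K$-algebra homomorphism
\[
  \phi\colon K[x_1, \ldots, x_n] \to K[y], \qquad x_i \mapsto y^{d^{i-1}}.
\]
Because $(a_1, \ldots, a_n) \mapsto a_1 + a_2 d + \cdots + a_n d^{n-1}$ is exactly the base-$d$ digit expansion, $\phi$ is injective on the $K$-subspace $V_d$ of polynomials with $\deg_{x_i} < d$ for every $i$, and its inverse on the image $\phi(V_d)$ is computed by expanding exponents in base $d$. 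The key point is that every factor of $f$ in $K[x_1, \ldots, x_n]$ already lies in $V_d$, so every multivariate factorization of $f$ maps under $\phi$ to a divisor relation in $K[y]$.

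Next, I would apply the splitting algorithm to obtain a factorization $\phi(f) = c\, p_1^{e_1} \cdots p_s^{e_s}$ in $K[y]$ with $c \in K^\times$ and distinct monic irreducibles $p_j$. The candidates for $\phi$-images of non-unit divisors of $f$ are then the finitely many products $g = \prod_{j=1}^{s} p_j^{a_j}$ with $0 \le a_j \le e_j$. For each such $g$, I first check the necessary condition $\deg(g) < d^n$ for membership in $\phi(V_d)$; if it holds, I compute the unique preimage $\tilde g \in V_d$ by writing each exponent appearing in $g$ in base $d$ and reading the digits off as the exponents of a monomial in $x_1, \ldots, x_n$, then use polynomial division in $K[x_1, \ldots, x_n]$ to test whether $\tilde g \mid f$. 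The surviving $\tilde g$ form a finite list containing, up to scalars in $K^\times$, every non-unit divisor of $f$; the irreducible factorization can then be extracted by iteratively peeling off a divisor of minimal positive total degree, dividing, and recursing on the quotient.

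The main obstacle, and the reason for this enumeration, is that $\phi$ does not preserve irreducibility: a single irreducible factor of $f$ can split into several irreducible factors of $\phi(f)$ --- for instance, $\phi(x_1 - x_2) = y - y^d$ already factors in $K[y]$. This is why ranging over all subsets of the irreducible factors of $\phi(f)$ is genuinely necessary, and why the two filters (membership in $\phi(V_d)$ and divisibility of $f$) are what turn spurious univariate divisors into the correct multivariate ones. All computational primitives invoked --- arithmetic and equality in $K$, polynomial arithmetic and division in $K[x_1, \ldots, x_n]$, and the univariate splitting algorithm --- are available by assumption, so the procedure terminates with the desired output.
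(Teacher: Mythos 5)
Your proof is correct and is essentially the paper's argument: the paper also uses the Kronecker substitution $x_i \mapsto z^{b^{i-1}}$ with $b$ exceeding all partial degrees and reduces to the univariate splitting algorithm via uniqueness of base-$b$ representations, simply delegating the details (recovering multivariate factors from subproducts of the univariate factorization) to the cited classical references. Your write-up just makes that enumeration-and-divisibility-check step explicit.
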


\begin{proof}
  A multivariate polynomial $f \in K[\vec x]$ can be factored by factoring the univariate polynomial $f(z, z^b, \dots, z^{b^n}) \in K[z]$ if $b \ge 2$ is chosen such that $b > \deg_{x_i}(f)$ for all $i$. 
  This is based on the uniqueness of the base-$b$ representation of integers.
  See \cite[\S42]{vdwaerden50} or \cite[\S52--\S59]{edwards84}.
  Alternatively, see \cite[Chapter 2.7]{becker-weisspfenning93} for a modern treatment.
\end{proof}

This motivates the following definition.

\begin{definition}
  A computable Krull domain $D$ has a \defit{splitting algorithm} if its field of fractions does.
\end{definition}

\begin{example}
  \begin{enumerate}
  \item A field is a computable Krull domain if and only if it is a computable field.
  Here \ref{comp: enum-divisors}, \ref{comp: valuations}, and \ref{comp: class-group-D} are vacuous.
  \item If $K$ is a computable field, then $K[x]$ is always a computable domain in the sense that addition and multiplication are computable.
  However, if $K$ does not have a splitting algorithm, then $K[x]$ is not a computable Krull domain, as \ref{comp: valuations} fails (keeping in mind that we represent divisors as formal sums of prime divisors).
  \item The ring of integers $\bZ$ is a computable Krull domain.
  Further, every ring of integers $\mathcal O$ in a number field is a computable Krull domain, by standard results from algebraic number theory (Minkowski's geometry of numbers).
  \item Analogous to the number-theoretic setting, in the function field setting, holomorphy rings of function fields (of transcendence degree one) over finite fields are computable Krull domains.
  These are the intersections of all but finitely many valuation rings of the function field.
  \end{enumerate}
\end{example}

\begin{remark}
That we only ask for factorizations in $K[x]$, and not in $D[x]$, to be computable, will a posteriori be justified by \cref{t:compute-factorizations,p: polynomial-computable-domain}, which show that over a computable Krull domain with splitting algorithm it is indeed possible to compute factorizations in $D[x]$.
\end{remark}

\subsection{Computing Factorizations in Computable Krull Domains}

If a divisorial fractional ideal $I$ of $D$ is given by generators $a_1$, \dots, $a_n$, then it is clearly possible to compute its corresponding divisor $\dv(I) = \dv(a_1) + \dots + \dv(a_n)$.
The following lemma shows that the converse direction is also computable.

\begin{lemma} \label{l:compute-generators}
  If $D$ is a computable Krull domain and $E \in \Div(D)$, it is possible to decide whether $E$ is principal.
  If $E$ is principal, it is possible to compute $a \in D$ such that $\dv(a)=E$.
  If $E$ is not principal, it is possible to compute $a$,~$b \in D$ such that $\min\{\dv(a),\dv(b)\} = E$.
\end{lemma}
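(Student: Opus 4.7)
The plan is to first decide principality using \ref{comp: class-group-D} and then to produce the generators by a blind enumeration in $K^\times$, with termination guaranteed by a classical Krull-theoretic existence result. For the first part, compute $[E] \in \Cl(D)$ via \ref{comp: class-group-D} and test whether $[E]$ is the trivial class; since the class group and the map $\Div(D) \to \Cl(D)$ are computable, this decides whether $E$ is principal.

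In the principal case, enumerate the elements of $K^\times$ as pairs $(a_1, a_2) \in D \times (D \setminus \{0\})$, which is effective from the enumeration of $D$ and the arithmetic provided by \ref{comp: operations}. For each candidate $a = a_1/a_2$, use \ref{comp: valuations} to form $\dv(a) = \dv(a_1) - \dv(a_2)$ and compare componentwise with $E$. The search terminates by the principality hypothesis. In the non-principal case, run the analogous enumeration over pairs $(a, b) \in K^\times \times K^\times$, comparing $\min\{\dv(a), \dv(b)\}$ with $E$ at each step.

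The key nontrivial step is to show that such a pair $(a, b)$ always exists. Write $E = \sum_{i=1}^t n_i P_i$. By the Approximation Theorem for Krull domains \cite[Theorem 5.8]{FOSSUM}, there exists $a \in K^\times$ with $\val_{P_i}(a) = n_i$ for $i \in [1,t]$ and $\val_Q(a) \ge 0$ for all other $Q \in \htop(D)$, so $\dv(a) = E + F$ with $F \ge 0$ supported on finitely many primes $Q_1, \ldots, Q_s$ disjoint from $\{P_1, \ldots, P_t\}$. A second application of the Approximation Theorem yields $b \in K^\times$ satisfying $\val_{P_i}(b) = n_i$, $\val_{Q_j}(b) = 0$, and $\val_Q(b) \ge 0$ for all remaining $Q$. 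A prime-by-prime check then gives $\min\{\dv(a), \dv(b)\} = E$: at each $P_i$ both valuations equal $n_i$; at each $Q_j$ we have $\min\{\val_{Q_j}(a), 0\} = 0 = \val_{Q_j}(E)$; and at any other $Q$ both $\val_Q(a)$ and $\val_Q(b)$ are nonnegative with $\val_Q(a) = 0$, forcing the minimum to be $0$.

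This existence argument is the only conceptually nontrivial point; once it is in place, the algorithmic content reduces to enumerating pairs and applying the primitives \ref{comp: operations}, \ref{comp: valuations}, and \ref{comp: class-group-D}. The work one might worry about, namely effectively carrying out the Approximation Theorem itself, is not needed: existence alone suffices to guarantee that the blind search terminates.
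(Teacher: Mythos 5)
Your proposal is correct and follows essentially the same route as the paper: decide principality via \ref{comp: class-group-D}, then find the generator(s) by blind enumeration, with termination guaranteed by an existence statement. The only difference is that where the paper cites \cite[Proposition 5.11]{FOSSUM} for the existence of the pair $a$,~$b$, you rederive it directly from the Approximation Theorem \cite[Theorem 5.8]{FOSSUM}, and your prime-by-prime verification of $\min\{\dv(a),\dv(b)\}=E$ is correct.
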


\begin{proof}
  The divisor $E$ is principal if and only if $[E] \in \Cl(D)$ is zero.
  This is decidable by \ref{comp: class-group-D} in \cref{d:computable-domain}.

  If $E$ is principal, an enumeration now finds $a \in D$ such that $\dv(a)=E$, using \ref{comp: valuations}. If $E$ is not principal, we know that there exist $a$,~$b \in D$ such that $\min\{\dv(a),\dv(b)\} = E$ \cite[Proposition 5.11]{FOSSUM}.
  Enumerating pairs of elements of $D$, we can again compute such a pair.
\end{proof}

\begin{remark}
  The enumeration procedures in \cref{l:compute-generators} work in principle but are of course completely impractical.
  For specific computable Krull domains of interest, one can often substitute more practical algorithms.
  For instance, for rings of integers in number fields, finding a generator of a principal ideal amounts to enumerating lattice points of small norm in a lattice associated to the divisor (i.e., fractional ideal).
\end{remark}

\begin{lemma} \label{l:compute-atoms}
  If $D$ is a computable Krull domain and $0 \ne a \in D$, it is possible to compute, up to associativity, all atoms $b \in D$ that divide $a$.
\end{lemma}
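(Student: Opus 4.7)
The plan is to exploit the correspondence between associate classes of nonzero elements in the Krull domain $D$ and integral principal divisors. Any atom $b$ dividing $a$ must satisfy $0 \le \dv(b) \le \dv(a)$ coefficient-wise on prime divisors, and since $\dv(a)$ has finite support, there are only finitely many candidate divisors to consider, and they can be computably enumerated.

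First, I would compute $\dv(a) = \sum_{i=1}^n e_i P_i$ using property \ref{comp: valuations}, and enumerate the finite set of integral divisors $E$ with $0 \le E \le \dv(a)$. For each such $E$, apply \cref{l:compute-generators} to decide whether $E$ is principal; if so, extract a generator $b_E \in D$ with $\dv(b_E) = E$. This reduces the problem to deciding, for each principal such $E$, whether the corresponding $b_E$ is an atom.

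The main step is an effective atomicity criterion: $b_E$ is an atom of $D$ if and only if no integral divisor $E'$ with $0 \le E' \le E$, $E' \ne 0$ and $E' \ne E$, is principal. Both directions rest on the fact that in a Krull domain one has $c \mid b_E$ precisely when $\dv(c) \le \dv(b_E)$, while $c \in D^\times$ precisely when $\dv(c)=0$. Thus a proper principal sub-divisor $E' = \dv(c)$ yields a non-trivial factorization $b_E = c \cdot (b_E/c)$, and conversely every non-trivial factorization of $b_E$ produces such an $E'$. Since only finitely many candidate $E'$ arise and principality of each is decidable by \cref{l:compute-generators}, this criterion is effective.

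Finally, I would return one $b_E$ for each principal $E \le \dv(a)$ whose atomicity test passes. Because two nonzero elements of a Krull domain are associates if and only if they determine the same principal divisor, distinct $E$ yield non-associate atoms, and every atom dividing $a$ is associate to exactly one such $b_E$. The only substantive ingredient is the sub-divisor characterization of atoms; algorithmically, the whole procedure is a bounded enumeration wrapped around finitely many invocations of \cref{l:compute-generators}, so I do not expect any serious obstacle beyond verifying the criterion.
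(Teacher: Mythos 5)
Your proposal is correct and follows essentially the same route as the paper: compute $\dv(a)$, enumerate the finitely many divisors $E$ with $0 \le E \le \dv(a)$, test principality via \cref{l:compute-generators}, and identify atoms as the generators of the minimal nonzero principal ones (your ``no proper nonzero principal sub-divisor'' criterion is exactly this minimality condition). No substantive difference from the paper's argument.
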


\begin{proof}
  Compute $\dv(a) = e_1 P_1 + \dots + e_m P_m$ using \ref{comp: valuations} in \cref{d:computable-domain}.
  If $b \mid a$ then $\dv(b) = k_1 P_1 +  \dots + k_m P_m$ for some $0 \le k_i \le e_i$ for all $i \in [1,m]$.
  For each of the finitely many choices of $(k_1, \dots, k_m)$, use \cref{l:compute-generators} to decide whether the divisor $k_1 P_1 + \dots + k_m P_m$ is principal.
  The minimal nonzero elements $(k_1,\dots,k_m)$ for which this is the case (with respect to the componentwise partial order) correspond to (pairwise non-associated) atoms dividing $a$.
  Using \cref{l:compute-generators} again, compute $b \in D$ such that $\dv(b) = k_1 P_1 + \dots + k_m P_m$ for these divisors.
\end{proof}

\begin{remark}
  Since $\varphi\colon \bZ^m \to \Cl(D), P_i \mapsto [P_i]$ is a group homomorphism, the set of all $(k_1,\dots,k_m)$ corresponding to elements dividing $a$ is given by 
  \[
  \big([0,e_1] \times \cdots \times [0,e_m]\big) \cap \ker(\varphi),
  \]
  and as such corresponds to the lattice points in $\bZ^m \cap \ker(\varphi)$ of a rational polytope in $\bR^m$.
\end{remark}

\begin{theorem}\label{t:compute-factorizations}
  If $D$ is a computable Krull domain and $0 \ne a \in D$, it is possible to compute all factorizations of $a$ into atoms \textup(up to order and associativity\textup). 
\end{theorem}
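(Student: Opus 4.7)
The plan is to reduce the problem to a finite search carried out entirely at the level of divisors. First I would compute $\dv(a) = e_1 P_1 + \cdots + e_m P_m$ using property~\ref{comp: valuations}, and then apply \cref{l:compute-atoms} to obtain a finite list $u_1$, \dots,~$u_s$ of pairwise non-associated atoms of $D$ that divide $a$. By construction each $\dv(u_i)$ is supported on $\{P_1, \dots, P_m\}$; writing
\[
  \dv(u_i) = \sum_{j=1}^m k_{ij} P_j \qquad \text{with } k_{ij} \in \bZ_{\ge 0},
\]
the coefficients $k_{ij}$ are computable by a further application of~\ref{comp: valuations}.

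The key observation is that, in a Krull domain, two nonzero elements are associate if and only if they have the same principal divisor: this is immediate from $D = \bigcap_{P \in \htop(D)} D_P$, since $\dv(b)=\dv(c)$ forces $b/c$ to lie in $D_P^\times$ for every $P$. Consequently, specifying a factorization of $a$ into atoms up to order and associativity is the same as specifying a multiplicity tuple $(n_1, \dots, n_s) \in \bZ_{\ge 0}^s$ for which $a$ and $u_1^{n_1}\cdots u_s^{n_s}$ have the same principal divisor, that is, solving the linear system
\[
  \sum_{i=1}^s n_i k_{ij} = e_j \qquad \text{for each } j \in [1,m].
\]
Since $u_i \mid a$, at least one $k_{ij}$ is positive, so any admissible $n_i$ satisfies $n_i \le \max_j e_j / k_{ij}$, and only finitely many tuples need to be tested. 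Each test is a comparison of integer sums, so the whole search is effective.

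The only point of care is that the correspondence between factorizations and solutions of the system is a bijection. Given any factorization $a = v_1 \cdots v_t$ into atoms, each $v_k$ divides $a$ and hence, by \cref{l:compute-atoms}, is associate to a unique $u_{i(k)}$, so the factorization is captured up to order and associativity by the tuple $n_i \coloneqq \#\{\, k : i(k)=i \,\}$. Conversely, a solution yields $a = \varepsilon\, u_1^{n_1} \cdots u_s^{n_s}$ for some $\varepsilon \in D^\times$, which is a factorization of $a$ into atoms after $\varepsilon$ is absorbed into any one of the factors (the degenerate case $\sum n_i = 0$, i.e.\ $a$ a unit, is detected by $\dv(a)=0$). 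I expect no serious obstacle: once \cref{l:compute-atoms} is in hand, the remaining work is an explicit bounded enumeration justified by the Krull factoriality of divisors.
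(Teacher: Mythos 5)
Your proposal is correct and follows essentially the same route as the paper: compute the atoms dividing $a$ via \cref{l:compute-atoms}, then reduce the enumeration of factorizations to the bounded system of linear Diophantine equations $\dv(a)=\sum_i n_i\,\dv(u_i)$ in $\bZ_{\ge 0}^s$, using that equality of principal divisors means associativity. The paper leaves the associativity/unit bookkeeping implicit where you spell it out, but the argument is the same.
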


\begin{proof}
  By \cref{l:compute-atoms}, we can compute, up to associativity, all atoms $b_1$, \dots,~$b_n$ dividing $a$ (possibly $n=0$ if $a \in D^\times$).
  Factorizations of $a$ correspond to $n$-tuples $(t_1,\ldots,t_n)\in \bZ_{\ge 0}^n$ such that $a=\varepsilon b_1^{t_1}\cdots b_n^{t_n}$ with $\varepsilon \in D^\times$.
  Expressed in terms of divisors, finding all factorizations is equivalent to finding all $(t_1,\ldots,t_n)\in \bZ_{\ge 0}^n$ such that
  \[
  \dv(a)=t_1\dv(b_1)+\cdots+t_n\dv(b_n).
  \]
  Note that the $t_i$ are bounded from above, since $\dv(b_i) \ne 0$ for each $i$.
  Since $\dv(a)$ and $\dv(b_i)$ can be computed by \ref{comp: valuations} in \cref{d:computable-domain}, finding the solutions reduces to solving a system of linear Diophantine equations in $\bZ_{\ge 0}^n$ with finitely many solutions.
\end{proof}

We summarize the (naive) algorithms suggested by the proofs of \cref{l:compute-atoms} and \cref{t:compute-factorizations} in pseudocode in \cref{alg:factor-atoms,alg:factor-full}.\footnote{The enumeration of tuples $(k_1,\ldots,k_m)$ in lexicographic order in \cref{alg:factor-atoms} ensures that all $(l_1,\dots,l_m)$ with $l_i \le k_i$ for all $i$ have already been considered, when checking minimality of $(k_1,\ldots,k_m)$.}

\begin{algorithm}
  \caption{Naive algorithm to compute all atoms dividing $a$}\label{alg:factor-atoms}
  \begin{algorithmic}
    \Function{AtomsDividing}{$0 \ne a \in D$}
      \State $e_1 P_1 + \cdots + e_m P_m \gets \dv(a)$ \Comment{using \ref{comp: valuations}}
      \State $A \gets \emptyset$ \Comment{will contain atoms}
      \State $C \gets \emptyset$ \Comment{will contain coefficient tuples of atoms}
      \ForAll{$(k_1,\ldots,k_m)\ne (0,\dots,0)$ with $0 \le k_i \le e_i$} \Comment{in lexicographic order}
        \If{$k_1 P_1 + \cdots + k_m P_m$ is principal} \Comment{using \cref{l:compute-generators}}
          \If{there is no $(l_1,\ldots,l_m) \in C$ with $l_i \le k_i$ for all $i$}
            \State $b \gets$ generator of $k_1 P_1 + \cdots + k_m P_m$ \Comment{using \cref{l:compute-generators}}
            \State $A \gets A \cup \{b\}$
            \State $C \gets C \cup \{(k_1,\ldots,k_m)\}$
          \EndIf
        \EndIf
      \EndFor
      \State \Return $A$
    \EndFunction
  \end{algorithmic}
\end{algorithm}

\begin{algorithm}
\begin{algorithmic}
  \caption{Naive algorithm to compute all factorization of $a$ (up to associativity)}\label{alg:factor-full}
  \Function{Factorizations}{$0 \ne a \in D$}
    \State $\{b_1,\ldots,b_n\} \gets$ \Call{AtomsDividing}{$a$}
    \State $F \gets \emptyset$
    \ForAll{$(t_1,\ldots,t_n)\in \bZ_{\ge 0}^n$ with $\dv(a)=t_1 \dv(b_1) + \dots + t_n \dv(b_n)$}
      \State $\varepsilon \gets a / (b_1^{t_1} \cdots b_n^{t_n})$ \Comment{computable in $K$}
      \State $F \gets F \cup \big\{\big(\varepsilon, (b_1, t_1),\ldots,(b_n,t_n)\big)\big\}$
    \EndFor
    \State \Return $F$
  \EndFunction
\end{algorithmic}
\end{algorithm}

\subsection{(Laurent) Polynomial Rings over Computable Krull Domains}

\begin{proposition}\label{p: polynomial-computable-domain}
  If $D$ is a computable Krull domain with splitting algorithm, then so is the polynomial ring $D[\vec x]$.
\end{proposition}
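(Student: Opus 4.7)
The plan is to verify each of the four conditions in \cref{d:computable-domain} together with the splitting property, for $D[\vec x]$, by bootstrapping from the analogous data for $D$. A polynomial in $D[\vec x]$ is represented as a finite list of coefficient–monomial pairs, so \ref{comp: operations} is immediate, and the fraction field $K(\vec x)$ (with $K = \operatorname{Frac}(D)$) inherits a splitting algorithm from that of $K$ by the iterated Kronecker trick already discussed just before the proposition.

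For \ref{comp: enum-divisors}, I would use the decomposition $\htop(D[\vec x]) = \htop(D) \sqcup \htop(K[\vec x])$ recalled in \S\ref{sssec:div-poly}. The first summand is enumerable by the hypothesis on $D$. The second is enumerable by listing elements of $K[\vec x]$, testing irreducibility with the splitting algorithm, and picking a canonical representative in each associate class (e.g.\ by normalizing the coefficient of the lex-leading monomial to $1$). For \ref{comp: valuations}, given $f = \sum_{\vec m} f_{\vec m} \vec x^{\vec m} \in D[\vec x]\setminus\{0\}$, I would compute the two types of valuations separately: for $P \in \htop(D)$ use the coefficientwise formula $\val_{PD[\vec x]}(f) = \min_{\vec m} \val_P(f_{\vec m})$, which rests on \ref{comp: valuations} for $D$ applied to finitely many coefficients (and gives nontrivial values only for finitely many $P$); for primes in $\htop(K[\vec x])$, factor $f$ in $K[\vec x]$ via the splitting algorithm and read off the multiplicities of the irreducible factors.

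The substantive step is \ref{comp: class-group-D}. Here I would invoke the isomorphism $\Cl(D) \cong \Cl(D[\vec x])$ from \cref{c:cl-polynomial}, so that the target group is already available as a computable group. What remains is to realize the inverse isomorphism effectively on a divisor $E \in \Div(D[\vec x])$. Split $E = E_1 + E_2$ by support: $E_1 = \sum n_P P D[\vec x]$ on extensions from $\htop(D)$ sends directly to $\sum n_P P \in \Div(D)$. For each $\vec p \in \htop(K[\vec x])$ appearing in $E_2$, corresponding to an irreducible $q \in K[\vec x]$, clear denominators to obtain $cq \in D[\vec x]$ for an effectively computable $c \in K^\times$ (any common denominator of the coefficients works). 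By the valuation algorithm of the previous paragraph, $\dv_{D[\vec x]}(cq) = \vec p + G$ with $G$ supported on $\htop(D)$-extensions, and principality of $cq$ gives $[\vec p] = -[G]$ in $\Cl(D[\vec x])$, reducing everything to the first case. Addition of classes and equality testing are then transported to the corresponding operations in $\Cl(D)$.

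The main obstacle worth flagging is exactly this fourth step: the isomorphism $\Cl(D) \cong \Cl(D[\vec x])$ is algebraically clean but needs to be turned into an explicit algorithm, and this relies crucially on both the splitting algorithm for $K[\vec x]$ (to recognize the $K[\vec x]$-irreducibles appearing in a principal divisor) and on the valuation algorithm for $D$ (to read off the $\htop(D)$-component once denominators are cleared). Once these tools are in place, the verification is largely bookkeeping.
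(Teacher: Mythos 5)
Your proposal is correct and follows essentially the same route as the paper: the same decomposition of $\htop(D[\vec x])$ into primes extended from $D$ and primes represented by irreducible polynomials of $K[\vec x]$, the same valuation formulas, and the same reduction of \ref{comp: class-group-D} to $\Cl(D)$ by writing the principal divisor of (a denominator-cleared multiple of) an irreducible $q \in K[\vec x]$ as the given prime plus a divisor supported on extensions from $D$. The only cosmetic difference is that the paper computes $\dv_{D[\vec x]}(p)$ directly for $p \in K[\vec x]$ rather than clearing denominators first.
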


\begin{proof}
  Clearly property \ref{comp: operations} in \cref{d:computable-domain} is satisfied.

  The polynomial ring $D[\vec x]$ is a Krull domain with two types of prime divisors (\cref{sssec:div-poly}): if $P \in \htop(D[\vec x])$ satisfies $D \cap P \ne 0$, then $P$ is extended from $D$, that is $P = P' D[\vec x]$ with $P' = D \cap P \in \htop(D)$.
  These prime divisors are computably enumerable by \ref{comp: enum-divisors}.
  Computationally, we simply represent them using the corresponding prime divisor $P'$ of $D$.
  In terms of the corresponding valuation, if $f = \sum_{\vec m} a_{\vec m} \vec x^{\vec m} \in D[\vec x]$, then $\val_{P}(f) = \min\{\, \val_{P \cap D}(a_{\vec m}) : \vec m  \,\}$.
  Therefore, the valuation $\val_{P}(f)$ is computable.

  If $P \cap D = 0$, then $P$ extends to the localization $K[\vec x]$ and $P = p K[\vec x] \cap D[\vec x]$ for some irreducible polynomial $p \in K[\vec x]$.
  Non-associated irreducible polynomials give rise to distinct prime divisors.
  Hence, prime divisors of this type correspond exactly to monic irreducible polynomials in $K[\vec x]$ (monic with respect to some arbitrary total order on monomials).
  Since $K$ has a splitting algorithm, the set of monic irreducible polynomials in $K[\vec x]$ is computably enumerable.
  We represent $P$ by the corresponding monic irreducible polynomial $p \in K[\vec x]$.
  In this case $\val_{P}(f)$ is simply the multiplicity of $p$ in the factorization of $f$ in $K[\vec x]$.
  Thus, also \ref{comp: enum-divisors} and \ref{comp: valuations} are satisfied for $D[\vec x]$.
 
  To prove \ref{comp: class-group-D}, recall that  $\Cl(D[\vec{x}])\cong \Cl(D)$ by \cref{c:cl-polynomial}, so it is sufficient to show: given a prime divisor $P$ of $D[\vec{x}]$, we can compute its class $[P]$ in $\Cl(D)$.
  If $P$ is extended from $D$, then this follows from \ref{comp: class-group-D} of $D$, because $[P]_{D[\vec x]} = [P \cap D]_D$.

  Now suppose $P = p K[\vec x] \cap D[\vec x]$ with $p \in K[\vec x]$ irreducible.
  Since $P$ is in fact represented by $p$, this polynomial $p$ is indeed explicitly given.
  Now
  \[
  \dv_{D[\vec x]}(p) = P + \sum_{i=1}^r e_i P_i,
  \]
  with $P_1$, \dots,~$P_r$ pairwise distinct, with $e_i \in \bZ$, and with $P_i \cap D \ne 0$ for all $i \in [1,r]$.
  The coefficients $e_i$ are computable by \ref{comp: valuations} of $D[\vec x]$, which we have already established.

  Now 
  \[
  [P]_{D[\vec x]} = -\Big[\sum_{i=1}^r e_i P_i\Big]_{D[\vec x]} = -\sum_{i=1}^r e_i [P_i \cap D]_D,
  \]
  and the last class is computable by \ref{comp: class-group-D} of $D$.
  Therefore, property~\ref{comp: class-group-D} holds for $D[\vec x]$ as well.
 
  Finally, since $K$ has a splitting algorithm, so does $K(\vec x)$.
\end{proof}

\begin{corollary}\label{c: laurent-computable-domains}
  If $D$ is a computable Krull domain with splitting algorithm, then so is the Laurent polynomial ring $D[\vec x^{\pm 1}]$.
\end{corollary}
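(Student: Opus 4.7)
The plan is to reduce everything to the polynomial ring case already handled in \cref{p: polynomial-computable-domain}, exploiting that $D[\vec x^{\pm 1}]$ is the localization of $D[\vec x]$ at the multiplicative set $S$ generated by $x_1$, \dots,~$x_n$. First, elements of $D[\vec x^{\pm 1}]$ can be represented as pairs $(f, \vec m)$ with $f \in D[\vec x]$ and $\vec m \in \bZ_{\ge 0}^n$, interpreted as $f/\vec x^{\vec m}$, and addition, multiplication, and equality testing in $D[\vec x^{\pm 1}]$ reduce to the analogous operations in $D[\vec x]$ together with cancellation of monomial factors. Since $K(\vec x)$ is the common field of fractions of $D[\vec x]$ and $D[\vec x^{\pm 1}]$, the splitting algorithm for $D[\vec x^{\pm 1}]$ is inherited from the one established in \cref{p: polynomial-computable-domain}. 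This takes care of \ref{comp: operations}.

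For \ref{comp: enum-divisors}, the discussion in \cref{sssec:div-poly} identifies $\htop(D[\vec x^{\pm 1}])$ with the subset of $\htop(D[\vec x])$ consisting of those prime divisors $P$ which do \emph{not} contain any $x_i$, the identification being given by extension $P \mapsto P\,D[\vec x^{\pm 1}]$. Concretely, these are the extensions of height-one primes of $D$ and of irreducible polynomials $p \in K[\vec x]$ not associated to any $x_i$; in each of these two families we already have enumeration and representation algorithms from the proof of \cref{p: polynomial-computable-domain}, and excluding the finitely many ``forbidden'' primes $(x_1)$, \dots,~$(x_n)$ is trivially decidable. Thus $\htop(D[\vec x^{\pm 1}])$ is computably enumerable.

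For \ref{comp: valuations}, given $a = f/\vec x^{\vec m} \in D[\vec x^{\pm 1}]$, first compute $\dv_{D[\vec x]}(f)$ using the already available algorithm; since the $x_i$ are units in $D[\vec x^{\pm 1}]$, the principal divisor $\dv_{D[\vec x^{\pm 1}]}(a)$ is obtained from $\dv_{D[\vec x]}(f)$ simply by dropping any terms with coefficient $P=(x_i)$ and reinterpreting the remaining prime divisors in $\htop(D[\vec x^{\pm 1}])$ through the extension bijection above. For \ref{comp: class-group-D}, Nagata's theorem (\cref{t:nagata}), combined with the fact that each $(x_i)$ is itself principal in $D[\vec x]$, yields that extension of divisorial ideals induces an isomorphism $\Cl(D[\vec x]) \cong \Cl(D[\vec x^{\pm 1}])$ (as already recorded in \cref{c:cl-polynomial}). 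Hence the class of a prime divisor $P D[\vec x^{\pm 1}]$ in $\Cl(D[\vec x^{\pm 1}])$ can be computed as the class of $P$ in $\Cl(D[\vec x]) \cong \Cl(D)$, which is computable by \cref{p: polynomial-computable-domain}; and equality and addition of classes reduce to the same operations in $\Cl(D)$.

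There is no real obstacle here: every step is a routine transport of structure through the localization $D[\vec x] \to D[\vec x^{\pm 1}]$. The only point that requires a moment's care is to keep the representation of prime divisors of $D[\vec x^{\pm 1}]$ consistent with that of $D[\vec x]$ when invoking \ref{comp: valuations} and \ref{comp: class-group-D}, so that the bijection $\htop(D[\vec x^{\pm 1}]) \leftrightarrow \htop(D[\vec x]) \setminus \{(x_1),\dots,(x_n)\}$ is used uniformly.
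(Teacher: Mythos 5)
Your proposal is correct and follows essentially the same route as the paper: both treat $D[\vec x^{\pm 1}]$ as the localization of $D[\vec x]$ at the multiplicative set generated by the prime elements $x_1,\dots,x_n$, identify $\htop(D[\vec x^{\pm 1}])$ with $\htop(D[\vec x])$ minus the primes $(x_1),\dots,(x_n)$, and transport \ref{comp: enum-divisors}--\ref{comp: class-group-D} via this bijection together with the isomorphism $\Cl(D[\vec x]) \cong \Cl(D[\vec x^{\pm 1}])$ from Nagata's Theorem. Your write-up merely spells out the bookkeeping (representation of elements, dropping the $(x_i)$-components of divisors) that the paper leaves implicit.
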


\begin{proof}
  Clearly \ref{comp: operations} in \cref{d:computable-domain} is satisfied.
  The ring $D[\vec x^{\pm 1}]$ arises from $D[\vec x]$ by localizing at the multiplicative set generated by the prime elements $x_1$, \dots,~$x_n$.
  Hence, the prime divisors of $D[\vec x^{\pm 1}]$ correspond exactly to the prime divisors of $D[\vec x]$, with the $n$ prime divisors $(x_1)$, \dots, $(x_n)$ omitted.
  In addition, there is a canonical isomorphism $\Cl(D[\vec x]) \cong \Cl(D[\vec x^{\pm 1}])$ (\cref{t:nagata}).
  Therefore, properties~\ref{comp: enum-divisors}, \ref{comp: valuations}, and \ref{comp: class-group-D}  for $D[\vec x^{\pm 1}]$ follow immediately from the corresponding properties for $D[\vec x]$.
\end{proof}

\subsubsection{Algorithms for \textup(Laurent\textup) Polynomial Rings}

To summarize the algorithmic content of the proofs of \cref{p: polynomial-computable-domain} and \cref{c: laurent-computable-domains}, we explicitly state the algorithms to compute the divisor of an element of $K(\vec x)$ with respect to $D[\vec x]$ (\cref{alg:poly-ring-divisor}), as well as the class of a prime divisor of $D[\vec x]$ as an element of $\Cl(D)$ (\cref{alg:poly-ring-class}).
For $D[\vec x^{\pm 1}]$, the functions \textsc{LPDivisor} and \textsc{LPCLassOfPrimeDivisor} are completely analogous, with the prime divisors corresponding to monomials omitted, hence we do not state them.

Keep in mind that prime divisors $P$ of $D[\vec x]$ are of two types:
\begin{itemize}
  \item if $P$ is extended from $D$, then it is represented by the prime divisor $P \cap D$ of $D$ (which has a computational representation by \ref{comp: enum-divisors} of $D$);
  \item if $P$ localizes to $K[\vec x]$, then it is represented by an irreducible polynomial $p \in K[\vec x]$.
\end{itemize}

\begin{algorithm}
  \caption{Compute a divisor with respect to $D[\vec x]$}\label{alg:poly-ring-divisor}
  \begin{algorithmic}
    \Function{PolyRingDivisor}{$0 \ne f/g \in K(\vec x)$}
      \If{$g \ne 1$}
        \State \Return \Call{PolyRingDivisor}{$f$} $-$ \Call{PolyRingDivisor}{$g$}
      \Else
        \State $\lambda p_1^{e_1} \cdots p_r^{e_r} \gets$ factor $f$ in $K[\vec x]$ \Comment{using the splitting algorithm of $K$}
        \State $f=\sum_{\vec m} a_{\vec m} \vec x^{\vec m}$ with $a_{\vec m} \in K$
        \State $C \gets \min\{\, \dv_D(a_{\vec m}) : \vec m \,\}$ \Comment{component-wise minimum, using \ref{comp: valuations} of $D$}
        \State \Return $C + \sum_{i=1}^r e_i (p_i K[\vec x] \cap D[\vec x])$
      \EndIf
    \EndFunction
  \end{algorithmic}
\end{algorithm}
\begin{algorithm}
\caption{Compute the class of a prime divisor of $D[\vec x]$}\label{alg:poly-ring-class}
\begin{algorithmic}
  \Require $P$ is a prime divisor of $D[\vec x]$
  \Function{PolyRingClassOfPrimeDivisor}{$P$}
    \If{$P$ is extended from $D$}
      \State \Return $[P \cap D]$ \Comment{using \ref{comp: class-group-D} of $D$}
    \Else \Comment{$P$ localizes to $K[\vec x]$}
      \State $p \gets$ irreducible polynomial of $K[\vec x]$ representing $P$
      \State $P + e_1 P_1 + \cdots + e_r P_r \gets$ \Call{PolyRingDivisor}{$p$} \Comment{all $P_i \ne P$ 
      extended from $D$}
      \State \Return $-\sum_{i=1}^r e_i [P_i \cap D]$ \Comment{using \ref{comp: class-group-D} of $D$}
    \EndIf
  \EndFunction
\end{algorithmic}
\end{algorithm}

\subsection{FLIRs over Computable Krull Domains}

To be able to compute in $D$-FLIRs, the change of variables between charts need to be given by explicit Laurent polynomials.

\begin{definition} \label{d:explicit-flir}
  Let $D$ be a computable domain.
  A $D$-FLIR $A$ is \defit{explicit} if there are explicitly given $l \ge 0$ and Laurent polynomials $p_{ij}$, $q_{ij} \in K[\vec t^{\pm 1}]$ for $i \in [1,l]$ and $j \in [1,n]$ such that
  \[
  \{ \vec x, \vec y(1), \dots, \vec y(l) \}
  \]
  is a system of charts for $A$ with
  \[
  \vec y(i) = \big(p_{i1}(\vec x), \dots, p_{in}(\vec x)\big) \qquad\text{and}\qquad \vec x = \big(q_{i1}(\vec y(i)), \dots, q_{in}(\vec y(i))\big).
  \]
\end{definition}

Thus, in an explicit FLIR, we can computationally convert elements from one chart to any other by simple substitution of Laurent polynomials: the isomorphisms $K(\vec{x}) \to K(\vec{y}(i))$ are explicit.

Generalizing the results on polynomial and Laurent polynomial rings from the previous subsection, we will prove the following theorem.

\begin{theorem} \label{t:flir-computable}
  An explicit FLIR over a computable Krull domain with splitting algorithm is a computable Krull domain with splitting algorithm.
\end{theorem}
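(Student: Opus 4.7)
The strategy is to reduce each of the computability requirements \ref{comp: operations}--\ref{comp: class-group-D} for $A$ to the corresponding properties of the Laurent polynomial rings $D[\vec y(i)^{\pm 1}]$ for $i \in [0,l]$ (with $\vec y(0) = \vec x$), each of which is a computable Krull domain with splitting algorithm by \cref{c: laurent-computable-domains}. Since the charts generate $K(\vec x)$ over $K$, the field of fractions of $A$ is $K(\vec x)$, for which a splitting algorithm follows from that of $K$. For \ref{comp: operations} I represent elements of $A$ as Laurent polynomials in the chart $\vec x$ (using $A \subseteq D[\vec x^{\pm 1}]$) and inherit arithmetic from $D[\vec x^{\pm 1}]$; membership $f \in A$ is decidable by substituting $x_j \mapsto q_{ij}(\vec y(i))$ for each $i \in [1,l]$ and checking, in each chart, Laurent-polynomial membership in $D[\vec y(i)^{\pm 1}]$. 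Enumeration of $A$ then proceeds by enumerating $D[\vec x^{\pm 1}]$ and filtering by this decidable membership test.

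Next, I classify $\htop(A)$ relative to the initial chart. A prime $P \in \htop(A)$ is of \emph{type (b)} if $x_1 \cdots x_n \notin P$, and of \emph{type (a)} otherwise. By Nagata's theorem (\cref{t:nagata}), type (b) primes correspond bijectively to $\htop(D[\vec x^{\pm 1}])$, which is computably enumerable. By \cref{l:lir-krull-subsystem}, every type (a) prime extends non-trivially to some chart $\vec y(i)$ with $i \in [1,l]$, in which case its image appears in the finite support of $\dv_{D[\vec y(i)^{\pm 1}]}(q_{i1}(\vec y(i)) \cdots q_{in}(\vec y(i)))$. Iterating over $i \in [1,l]$ produces a finite candidate list whose duplicates, arising from a single prime of $A$ being visible in several charts, are eliminated by comparing the induced valuations on a common test set of elements of $A$, obtained for instance from \cref{l:compute-generators}. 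This yields \ref{comp: enum-divisors}. For \ref{comp: valuations}, given $a \in A$, I compute $\dv_{D[\vec y(i)^{\pm 1}]}(a)$ in each chart via \cref{alg:poly-ring-divisor} and merge the contractions; since each prime divisor of $A$ is visible in at least one chart, this recovers $\dv_A(a)$ as a finite formal sum.

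For the class group I invoke \cref{t:class-group-over-krull-domains}, which presents
\[
\Cl(A) \cong \Cl(D) \oplus \bZ^r / \langle \vec c_1, \ldots, \vec c_n \rangle
\]
using the finitely many type (a) primes $P_1, \ldots, P_r$ and vectors $\vec c_i = (\val_{P_j}(x_i))_{j \in [1,r]}$, all of which are computable by the preceding steps. To compute the class of a given divisor of $A$, I decompose it into its type (a) part (directly an element of $\bZ^r$) and its type (b) part (a divisor of $D[\vec x^{\pm 1}]$, whose class is computed via the isomorphism $\Cl(D[\vec x^{\pm 1}]) \cong \Cl(D)$ from \cref{c:cl-polynomial}) and sum the images in the presentation above. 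Equality in the finitely presented abelian group $\Cl(A)$ is decidable by Smith normal form, giving \ref{comp: class-group-D}.

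The main technical obstacle is handling the fact that a single prime divisor of $A$ of type (a) may be visible in several of the charts $D[\vec y(i)^{\pm 1}]$, so that the naive enumeration produces redundant representatives. This is resolved by fixing a canonical chart (e.g.\ the smallest index $i$ in which the prime extends non-trivially) and testing equality of two chart-representatives through comparison of their valuations on generators extracted via \cref{l:compute-generators}; once this bookkeeping is in place, all remaining verifications reduce to straightforward applications of the algorithms for Laurent polynomial rings.
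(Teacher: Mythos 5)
Your overall architecture (charts, splitting $\htop(A)$ into the finitely many primes over $x_1\cdots x_n$ and the primes extending to $D[\vec x^{\pm 1}]$, then invoking \cref{t:class-group-over-krull-domains}) matches the paper's, but there is a genuine gap in your treatment of \ref{comp: class-group-D}. You compute the class of a divisor of $A$ by sending its type (a) part to $\bZ^r$ and its type (b) part to $\Cl(D)$ via $\Cl(D[\vec x^{\pm 1}])\cong\Cl(D)$, which implicitly assigns $H$-component zero to every type (b) prime. That is false: the decomposition $\Cl(A)\cong\Cl(D)\oplus H$ does not correspond to the support decomposition of $\Div(A)$ in this naive way. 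Concretely, take the FLIR $B=\bZ[x,6/x]$ of \cref{exm:flir-1-var} and the prime $P=2\bZ[x^{\pm1}]\cap B=(2,6/x)$, which does not contain $x$ and is extended from $(2)\subseteq\bZ$. Your recipe gives $[P]=0$ because $\Cl(\bZ)=0$; but $\dv_B(2)=P+(x,2)$, so $[P]=-[(x,2)]$, a generator of $\Cl(B)\cong\bZ$, and your algorithm would wrongly declare $P$ principal. Computing the $H$-component of a type (b) prime is exactly the technical heart of the paper's proof (\cref{l:computation-class-group-FLIR}): for a prime of polynomial type one computes $\dv_A(p)=P+\sum_{i}e_iP_i+E$ with $E$ supported on primes extended from $D$, and uses that the splitting in \cref{l:class-group-splits} is induced by extension from $D$ to conclude $[P]_H=-\sum_i e_i[P_i]$; for a prime extended from $D$ one first produces $a$,~$b\in D$ with $P_0=\min\{\dv_D(a),\dv_D(b)\}$ via \cref{l:compute-generators} and argues the same way with $\min\{\dv_A(a),\dv_A(b)\}$. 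This step is absent from your proposal.

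A second, smaller issue is your deduplication of the type (a) primes. The canonical ``smallest chart in which the prime is visible'' is indeed the paper's choice, but the equality test you propose --- comparing valuations on a test set obtained from \cref{l:compute-generators} --- is not justified: if $a$,~$b$ generate a prime $Q$ divisorially in $D[\vec y(i)^{\pm 1}]$, then $\min\{\val_{Q'}(a),\val_{Q'}(b)\}\ge 1$ for a prime $Q'$ found in another chart does not imply $Q'\cap A=Q\cap A$, because $(aA+bA)_v$ can acquire components at primes invisible in chart $i$. The paper avoids cross-chart identification entirely (\cref{l:compute-extra-prime-divisors-FLIR}): when processing chart $i$ it keeps only those primes over $x_1\cdots x_n$ that also contain $\vec y(j)_1\cdots\vec y(j)_n$ for every $j<i$, i.e.\ that are not visible in any earlier chart --- a condition checkable by divisor computations inside chart $i$ alone. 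Replacing your equality test by this criterion, and repairing \ref{comp: class-group-D} as above, would bring the proposal in line with a correct proof.
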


We split the proof into several lemmas.
First we need a way to represent the prime divisors of $A$.
Since $D[\vec x^{\pm 1}]=A[\vec x^{-1}]$ is the localization of $A$ at the multiplicative set generated by $x_1$, \dots,~$x_n$, the only prime divisors of $A$ that do not extend to $D[\vec x^{\pm 1}]$ are the finitely many prime divisors $P$ with $x_1\cdots x_n \in P$.
However, each such $P$ extends non-trivially to some $D[\vec y(i)^{\pm 1}]$ (\cref{l:lir-krull-subsystem}).

The following lemma shows that we can compute the prime divisors over $x_1\cdots x_n$ without repetition: when considering the chart $\vec y(i)$, we can detect and discard all those prime divisors over $x_1\cdots x_n$ that show up in some chart $\vec y(j)$ with $j < i$.

\begin{lemma} \label{l:compute-extra-prime-divisors-FLIR}
  For each $i \in [1,l]$, it is possible to compute $s_i \ge 0$ and prime divisors $Q_{i1}$, \dots,~$Q_{is_i}$ of $D[\vec y(i)^{\pm 1}]$, such that this is precisely the set of prime divisors $Q$ of $D[\vec y(i)^{\pm 1}]$ with
  \[
  x_1\cdots x_n \in Q \qquad\text{and}\qquad \vec y(j)_1 \cdots \vec y(j)_n \in Q \qquad\text{for all $j < i$}.
  \]
\end{lemma}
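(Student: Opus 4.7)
The plan is to reduce the computation to divisor arithmetic in the Laurent polynomial ring $D[\vec y(i)^{\pm 1}]$, which is itself a computable Krull domain with splitting algorithm by \cref{c: laurent-computable-domains}. The explicit coordinate changes $q_{ik}$ and $p_{jk}$ furnished by \cref{d:explicit-flir} supply the bridge between the charts.

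First, I will express each $x_k$ as an element of $D[\vec y(i)^{\pm 1}]$ via $x_k = q_{ik}(\vec y(i))$. A priori this lies in $K[\vec y(i)^{\pm 1}]$, but the inclusion $x_k \in A \subseteq D[\vec y(i)^{\pm 1}]$, combined with equality in the fraction field $K(\vec y(i))$, forces the simplified coefficients to lie in $D$. Analogously, for each $j<i$ and each $k\in[1,n]$, the composition
\[
y(j)_k \;=\; p_{jk}(\vec x) \;=\; p_{jk}\bigl(q_{i1}(\vec y(i)), \dots, q_{in}(\vec y(i))\bigr)
\]
is explicitly realized as an element of $D[\vec y(i)^{\pm 1}]$.

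Using property \ref{comp: valuations} of $D[\vec y(i)^{\pm 1}]$, I will then compute
\[
\dv_{D[\vec y(i)^{\pm 1}]}(x_1 \cdots x_n) \;=\; \sum_{r=1}^t e_r Q_r,
\]
where the $Q_r$ are represented via the bijection $\htop(D[\vec y(i)^{\pm 1}]) \cong \htop(D) \sqcup \htop(K[\vec y(i)^{\pm 1}])$ from \cref{sssec:div-poly}. Since $x_1\cdots x_n$ is a genuine element of $D[\vec y(i)^{\pm 1}]$, every $e_r$ is positive, so the $Q_r$ are precisely the finitely many prime divisors of $D[\vec y(i)^{\pm 1}]$ that contain $x_1\cdots x_n$.

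To finish, for each candidate $Q_r$ and each $j<i$ I will compute $\val_{Q_r}(y(j)_1\cdots y(j)_n)$ by the same valuation algorithm, and retain $Q_r$ exactly when this valuation is strictly positive for every $j<i$. The surviving primes form the required list $Q_{i1}, \dots, Q_{is_i}$. There is no substantive obstacle here; the lemma amounts to a careful bookkeeping application of the computability of Laurent polynomial rings combined with the explicit substitutions of \cref{d:explicit-flir}. The only point meriting explicit verification is that the intermediate Laurent-polynomial expressions really lie in $D[\vec y(i)^{\pm 1}]$, not merely in the larger ring $K[\vec y(i)^{\pm 1}]$, which is what the first step above ensures.
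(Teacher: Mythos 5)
Your proposal is correct and follows essentially the same route as the paper: use the explicit chart substitutions to write $x_1\cdots x_n$ and the products $\vec y(j)_1\cdots \vec y(j)_n$ ($j<i$) as elements of $K[\vec y(i)^{\pm 1}]$, compute their divisors in $D[\vec y(i)^{\pm 1}]$ via \cref{c: laurent-computable-domains}, and keep exactly the prime divisors lying in the common support (positive valuation for all of these elements). Your extra check that the expressions actually lie in $D[\vec y(i)^{\pm 1}]$ rather than just $K[\vec y(i)^{\pm 1}]$ is a harmless refinement the paper does not need, since the divisor algorithm works for arbitrary elements of $K(\vec x)$ and nonnegativity of the relevant valuations already follows from $x_k,\, y(j)_k \in A \subseteq D[\vec y(i)^{\pm 1}]$.
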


\begin{proof}
  Since $A$ is an explicit FLIR, we can express $x_1\cdots x_n=f(\vec y(i))$ with $f \in K[\vec t^{\pm 1}]$, and similarly $\vec y(j)_1 \cdots \vec y(j)_n = g_j(\vec y(i))$ with $g_j \in K[\vec t^{\pm 1}]$ for all $j \in [1,l]$ with $j < i$.

  Now compute $\dv_{D[\vec y(i)^{\pm 1}]}(f)$ and $\dv_{D[\vec y(i)^{\pm 1}]}(g_j)$ using \cref{c: laurent-computable-domains}.
  The desired set $\{ Q_{i1}, \dots, Q_{is_i} \}$ consists of the prime divisors that appear in the support of $\dv_{D[\vec y(i)^{\pm 1}]}(f)$ and also all the supports of $\dv_{D[\vec y(i)^{\pm 1}]}(g_j)$ for $j < i$.
\end{proof}

Thus, we can represent the set of prime divisors of $A$ as a disjoint union
\[
  \htop(A) = \big\{\, Q \cap A : Q \in \htop(D[\vec x^{\pm 1}]) \,\big\} \ \cup\  \bigcup_{i=1}^l \big\{ Q_{i1} \cap A, \dots, Q_{is_i} \cap A \big\}.
\]
Since we already know how to work with divisors in each of the localizations $D[\vec x^{\pm 1}]$, $D[\vec y(1)^{\pm 1}]$, \dots,~$D[\vec y(l)^{\pm 1}]$ of $A$, this gives us a computably enumerable representation of the prime divisors of $A$.

\begin{lemma} \label{l:computation-divisors-FLIR}
  For each $a \in K(\vec x)$, it is possible to compute the divisor $\dv_A(a)$.
\end{lemma}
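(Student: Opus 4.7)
The plan is to exploit the decomposition of $\htop(A)$ into charts that is implicit in \cref{l:lir-krull-subsystem} and was made explicit by \cref{l:compute-extra-prime-divisors-FLIR}: every $P \in \htop(A)$ extends non-trivially to at least one of the Laurent polynomial rings $D[\vec x^{\pm 1}]$, $D[\vec y(1)^{\pm 1}]$, \dots, $D[\vec y(l)^{\pm 1}]$, and in that chart one has $A_P = D[\vec z^{\pm 1}]_{P D[\vec z^{\pm 1}]}$, so $\val_P(a)$ equals the valuation of $a$ in the corresponding DVR. Since \cref{c: laurent-computable-domains} already provides divisor algorithms in each of these Laurent polynomial rings, the computation of $\dv_A(a)$ reduces to a finite number of such chartwise computations, followed by a bookkeeping step to assemble the coefficients.

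Concretely, I would first compute $E_0 \coloneqq \dv_{D[\vec x^{\pm 1}]}(a)$ using the Laurent version of \cref{alg:poly-ring-divisor}. Because $D[\vec x^{\pm 1}] = A[(x_1\cdots x_n)^{-1}]$, the support of $E_0$ corresponds bijectively to those $P \in \htop(A)$ with $x_1 \cdots x_n \notin P$, and the corresponding coefficients of $\dv_A(a)$ coincide with those read off from $E_0$. Next, for each $i \in [1,l]$, I would use the explicit Laurent polynomials $q_{i1}, \dots, q_{in}$ from \cref{d:explicit-flir} to substitute $\vec x = (q_{i1}(\vec y(i)), \dots, q_{in}(\vec y(i)))$ into $a$, obtaining its representation $\tilde a_i \in K(\vec y(i))$. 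I would then compute $E_i \coloneqq \dv_{D[\vec y(i)^{\pm 1}]}(\tilde a_i)$ and extract from $E_i$ the coefficients at the finitely many primes $Q_{i1}, \dots, Q_{i s_i}$ already produced by \cref{l:compute-extra-prime-divisors-FLIR}. These are exactly the $P \in \htop(A)$ that contain $x_1 \cdots x_n$ but do not contain $\vec y(j)_1 \cdots \vec y(j)_n$ for any $j<i$, so iterating over $i$ picks up each remaining prime of $A$ exactly once.

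Finally, $\dv_A(a)$ is assembled as the formal sum over all primes so enumerated, with coefficients the values just extracted. Finiteness is automatic since each $E_i$ has finite support and we read off only finitely many coefficients from each. The main subtlety — which is in fact already handled upstream — is that \cref{l:compute-extra-prime-divisors-FLIR} was designed precisely so that the enumeration of $\htop(A)$ across charts is without repetition, so no double-counting occurs; the correctness of the assembled divisor then reduces to the identification $A_P = D[\vec z^{\pm 1}]_{PD[\vec z^{\pm 1}]}$ in whichever chart $\vec z$ realises $P$. No Gröbner basis computation or intersection of ideals is required: only Laurent-polynomial divisor computations, explicit substitutions, and a finite lookup.
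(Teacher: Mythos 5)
Your proposal is correct and follows essentially the same route as the paper: the paper's proof simply computes $\dv_{D[\vec x^{\pm 1}]}(a)$ and each $\dv_{D[\vec y(i)^{\pm 1}]}(a)$ via \cref{c: laurent-computable-domains} and combines them, which is exactly your chartwise computation plus assembly (this is also precisely what \cref{alg:flir-divisor} does, including the non-redundant bookkeeping via \cref{l:compute-extra-prime-divisors-FLIR}). Your write-up just spells out the details that the paper leaves implicit.
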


\begin{proof}
  It suffices to compute $\dv_{D[\vec x^{\pm 1}]}(a)$ and $\dv_{D[\vec y(i)^{\pm 1}]}(a)$ for all $i \in [1,l]$, and to then combine the resulting information.
  This is possible by \cref{c: laurent-computable-domains}.
\end{proof}

\begin{lemma}\label{l:computation-class-group-FLIR}
  The class group $\Cl(A)$ and the homomorphism $\Div(A) \to \Cl(A)$ are computable.
\end{lemma}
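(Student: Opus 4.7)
My plan is to realize $\Cl(A)$ via the explicit decomposition
\[
  \Cl(A) \cong \Cl(D) \oplus \bigl(\bZ^r / \langle \vec c_1, \ldots, \vec c_n \rangle\bigr)
\]
from \cref{t:class-group-over-krull-domains}, and then to compute the homomorphism $\Div(A) \to \Cl(A)$ one prime divisor at a time, using the algorithms already established for $D$ and $D[\vec x^{\pm 1}]$.

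First I would assemble the data of this presentation. The finite set of prime divisors $P_1, \ldots, P_r$ of $A$ containing $x_1 \cdots x_n$ is the disjoint union of the $Q_{ij}$ produced by \cref{l:compute-extra-prime-divisors-FLIR} as $i$ ranges over $[1,l]$, viewed inside $A$. Each coefficient vector $\vec c_i \in \bZ^r$ is read off from $\dv_A(x_i)$ at the positions $P_1, \ldots, P_r$, which is computable by \cref{l:computation-divisors-FLIR}. The quotient $\bZ^r / \langle \vec c_1, \ldots, \vec c_n \rangle$ is a finitely presented abelian group with decidable equality via Smith normal form, so together with the computable $\Cl(D)$ (assumed by~\ref{comp: class-group-D}) the direct sum $\Cl(A)$ is a computable abelian group.

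By linearity, computing $\Div(A) \to \Cl(A)$ reduces to computing the class of a single prime divisor. For $P = P_j$ the class is $(0, \vec e_j)$. For a type (a) prime, represented by $Q = P D[\vec x^{\pm 1}] \in \htop(D[\vec x^{\pm 1}])$, I would first compute $[Q] \in \Cl(D[\vec x^{\pm 1}])$ via \cref{c: laurent-computable-domains} and transport it along the canonical isomorphism $\beta$ to an element $[J] \in \Cl(D)$; by the splitting in \cref{l:class-group-splits}, this $[J]$ is exactly the $\Cl(D)$-component of $[P]_A$. For the $H$-component, I would pick generators $a$, $b$ of an ideal representing $[J]$ via \cref{l:compute-generators}, form the explicit divisor $E_J \coloneqq \min\{\dv_A(a), \dv_A(b)\}$ representing $\phi([J])$, and observe that $(P - E_J)|_{D[\vec x^{\pm 1}]}$ is principal in $D[\vec x^{\pm 1}]$ (since both classes agree after $\alpha$). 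A witness $f \in K(\vec x)^\times$ with $\dv_{D[\vec x^{\pm 1}]}(f) = (P - E_J)|_{D[\vec x^{\pm 1}]}$ is then obtained by a second invocation of \cref{l:compute-generators} inside the computable Krull domain $D[\vec x^{\pm 1}]$. The corrected divisor $P - E_J - \dv_A(f) \in \Div(A)$ is then supported only on $\{P_1, \ldots, P_r\}$, and its coefficient vector in $\bZ^r$, reduced modulo $\langle \vec c_1, \ldots, \vec c_n \rangle$, is the required $H$-component of $[P]_A$.

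The conceptual obstacle is precisely this last step: although the splitting $\phi \circ \beta^{-1}$ of \cref{l:class-group-splits} makes the decomposition $\Cl(A) \cong \Cl(D) \oplus H$ abstractly clean, one must upgrade the class-level equality $\alpha([P]_A) = \alpha(\phi([J]))$ to an equality of divisors by exhibiting a concrete rational function $f$ that realizes principality in the localization. Once this correction is in hand, everything else is a routine composition of \cref{l:compute-generators}, \cref{l:computation-divisors-FLIR}, and \cref{c: laurent-computable-domains}.
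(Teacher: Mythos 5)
Your proposal is correct and follows the paper's proof in outline: the same presentation $\Cl(A)\cong\Cl(D)\oplus\bZ^r/\langle\vec c_1,\dots,\vec c_n\rangle$ from \cref{t:class-group-over-krull-domains}, the same reduction to computing the class of a single prime divisor, the same use of the splitting in \cref{l:class-group-splits}, and the same trick of subtracting a divisor of known trivial $H$-class and reading off the coefficients at $P_1,\dots,P_r$. Where you diverge is in how that correction divisor is produced: the paper splits into two cases --- for $P$ localizing to $K[\vec x^{\pm 1}]$ it uses the representing irreducible polynomial $p$ and the principal divisor $\dv_A(p)$, and for $P$ extended from $D$ it uses generators of $P_0=PD[\vec x^{\pm 1}]\cap D$, so that in both cases the needed witness is already in hand --- whereas you treat both cases uniformly by forming $E_J=\min\{\dv_A(a),\dv_A(b)\}$ and then searching for a rational function $f$ with $\dv_{D[\vec x^{\pm 1}]}(f)=(P-E_J)|_{D[\vec x^{\pm 1}]}$. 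This is computably fine (principality is decidable and a generator is found by enumeration in the computable Krull domain $D[\vec x^{\pm 1}]$), but it adds an enumeration step the paper's case analysis avoids, which is why the paper's version translates directly into the practical \cref{alg:flir-class}. Two small details you should fill in: \cref{l:compute-generators} takes a \emph{divisor} of $D$ as input, not a class, so before invoking it you must exhibit a divisor representing $[J]$ (this comes for free from \cref{alg:poly-ring-class}, which expresses $[J]$ as an explicit integer combination of prime divisors of $D$, or by enumeration); and since $(P-E_J)|_{D[\vec x^{\pm 1}]}$ may have negative coefficients, you need the obvious extension of \cref{l:compute-generators} that produces a generator in the fraction field rather than in the ring. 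Neither issue affects correctness.
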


\begin{proof}
  By \cref{t:class-group-over-krull-domains}, the class group of $A$ is $\Cl(D)\oplus H$, with 
  \[
    H\cong \bZ^r/\langle \vec{c}_1,\ldots,\vec{c}_n\rangle \qquad \text{ and } \qquad  \dv_A(x_i) = \sum_{j=1}^r c_{ij} P_j,
  \] 
  where $P_1$, \dots,~$P_r\in \htop(A)$ are the prime divisors of $A$ containing $x_1\cdots x_n$, and $\vec{c}_i=(c_{i1},\ldots,c_{ir})\in \bZ^r$ for all $i\in [1,n]$.
  Here $\Cl(D) \cong \Cl(D[\vec x^{\pm 1}])$ via the extension map $[I] \mapsto \big[ID[\vec x^{\pm 1}]\big]$.

  By \cref{c: laurent-computable-domains}, the homomorphism $\Div(D[\vec x^{\pm 1}]) \to \Cl(D)$ is computable.
  Since $D[\vec x^{\pm 1}]=A[\vec x^{-1}]$ is the localization of $A$ at the multiplicative set generated by $x_1$, \dots,~$x_n$, the map $\Div(A) \to \Div(D[\vec x^{\pm 1}])$ is given by omitting the prime divisors $P_1$, \dots,~$P_r$, and is therefore computable as well. 
  Thus, the map $\Div(A) \to \Cl(D)$ is computable.

  We can compute $\dv_{A}(x_i)$, and hence the vectors $\vec c_i$, by \cref{l:computation-divisors-FLIR}.
  Thus, also $H$ is computable.
  It remains to show that the map $\Div(A) \to H$ is computable.
  That is, given a prime divisor $P \in \htop(A)$, we need to show that we can compute its image in $H$.

  Every prime divisor $P \in \htop(A)$ that does not contain $x_1\cdots x_n$ extends to a prime divisor $P' \in \htop(D[\vec x^{\pm 1}])$.
  Therefore, we have a disjoint union
  \[
  \htop(A) = \{ P_1,\dots, P_r \} \cup \mathfrak X_0 \cup \mathfrak X_1,
  \]
  where $\mathfrak X_0$ consists of prime divisors of $A$ that are of the form $P_0 D[\vec x^{\pm 1}] \cap A$ for some $P_0 \in \htop(D)$, and where $\mathfrak X_1$ consists of prime divisors of the form $p K[\vec x^{\pm 1}] \cap A$ for some irreducible polynomial $p \in K[\vec x^{\pm 1}]$.
  
  Let $P \in \htop(A)$. 
  If $P=P_i$, for some $i$, then there is nothing to do, because our chosen presentation of $H$ uses $[P_1]$, \dots,~$[P_r]$ as generators.
  So without restriction $x_1\cdots x_n \not \in P$.

  Now suppose $P \in \mathfrak X_1$, that is, we have $P K[\vec x^{\pm 1}] = p K[\vec x^{\pm 1}]$ for some irreducible polynomial $p \in K[\vec x^{\pm 1}]$.
  The kernel of $\Div(A) \to \Div(K[\vec x^{\pm 1}])$ is generated by $P_1$, \dots,~$P_r$ together with $\mathfrak X_0$.
  Therefore,
  \[
  \dv_A(p) = P + \sum_{i=1}^r e_i P_i + E,
  \]
  for some $e_i \in \bZ$ and a divisor $E$ supported on $\mathfrak X_0$.
  The coefficients $e_i$ are computable by \cref{l:computation-divisors-FLIR}.
  By \cref{l:class-group-splits}, the class of $E$ is trivial in $H$.
  Thus, we have $[P]_H = -e_1 [P_1] - \cdots - e_r [P_r]$ in $H$.
  Having expressed $[P]_H$ in terms of the generators of $H$, it is therefore computable.

  Finally, consider $P \in \mathfrak X_0$, that is $P D[\vec x^{\pm 1}] = P_0 D[\vec x^{\pm 1}]$ with $P_0 = D \cap P D[\vec x^{\pm 1}] \in \htop(D)$.
  Note that this $P_0$ is computable from $P$, because prime divisors on $D[\vec x^{\pm 1}]$ that are extended from $D$ are in fact simply represented by their corresponding prime divisors of $D$.
  \Cref{l:compute-generators} allows us to compute $a$,~$b \in D$ such that $P_0=\min\{\dv_D(a),\dv_D(b)\}$.
  The corresponding divisorial fractional ideal of $D$ is then $(aD+bD)_v$, and its extension to $A$ is $(aA+bA)_v$.
  Thus, the extension of $P_0$ to $A$ is represented by the divisor $\min\{\dv_A(a),\dv_A(b)\}$.
  Hence,
  \[
  \min\{\dv_A(a),\dv_A(b)\} = P + \sum_{i=1}^r e_i P_i,
  \]
  for some $e_i \in \bZ$ that are computable by \cref{l:computation-divisors-FLIR}.
  Since the short exact sequence in \cref{l:class-group-splits} is split by the map induced by extension of divisorial ideals from $D$ to $A$, the class of $\min\{\dv_A(a),\dv_A(b)\}$ is trivial when projected onto $H$.
  We get $[P]_H = -e_1 [P_1] - \cdots - e_r [P_r]$ in $H$.
\end{proof}

We can now show that an explicit FLIR $A$ over computable Krull domain $D$ with splitting algorithm is again a computable Krull domain (with splitting algorithm).

\begin{proof}[Proof of \cref{t:flir-computable}]
  We have to verify the properties in \cref{d:computable-domain} for $A$.
  Since $A$ is given as a subring of $D[\vec x^{\pm 1}]$, property \ref{comp: operations} is clear.
  Property \ref{comp: enum-divisors} follows from our representation of $\htop(A)$ as disjoint union of $\htop(D[\vec x^{\pm 1}])$ together with an additional finite set of prime divisors lying over $x_1\cdots x_n$ (see the discussion after \cref{l:compute-extra-prime-divisors-FLIR}).
  Now \cref{l:computation-divisors-FLIR} shows \ref{comp: valuations}, and \cref{l:computation-class-group-FLIR} shows \ref{comp: class-group-D}.
  Finally, since the field of fractions of $A$ is $K(\vec x)$, it has a splitting algorithm.
\end{proof}

\subsubsection{Algorithms for FLIRs}

We again summarize the algorithmic content of \cref{t:flir-computable} in \cref{alg:flir-divisor,alg:flir-class}.
Keep in mind that all but finitely many prime divisors of $A$ extend to $D[\vec x^{\pm 1}]$, and the remaining prime divisors, which lie over $x_1\cdots x_n$, extend non-trivially to some $D[\vec y(i)^{\pm 1}]$.
In \cref{l:compute-extra-prime-divisors-FLIR} we have seen that we can enumerate the prime divisors of the latter type as $Q_{i1} \cap A$, \dots,~$Q_{is_i} \cap A$ in a non-redundant way.

\begin{algorithm}
  \caption{Compute the divisor of an element of a FLIR $A$} \label{alg:flir-divisor}
  \begin{algorithmic}
    \Function{FLIRDivisor}{$0 \ne f \in K(\vec x)$}
      \State $E \gets$ \Call{LPDivisor}{$f$} \Comment{divisor of $D[\vec x^{\pm 1}]$}
      \State \ForAll{$i \in [1,l]$}
        \State $f_i \in K(\vec y(i)) \gets$ expression of $f$ in chart $\vec y(i)$
        \State $E_i \gets$ \Call{LPDivisor}{$f_i$} \Comment{divisor of $D[\vec y(i)^{\pm 1}]$}
        \State \ForAll{$j \in [1,s_i]$}
          \State $e_{ij} \gets$ coefficient of $Q_{ij}$ in $E_i$
          \State $E \gets E + e_{ij} (Q_{ij} \cap A)$
        \EndFor
      \EndFor
      \State \Return $E$
    \EndFunction
  \end{algorithmic}
\end{algorithm}

\begin{algorithm}
\caption{Compute the class of a prime divisor of a FLIR $A$ in $\Cl(D) \oplus H$}  \label{alg:flir-class}
\begin{algorithmic}
  \Require $P$ is a prime divisor of $A$
  \Function{FLIRClassOfPrimeDivisor}{$P$}
    \State $c_1 P_1 + \cdots + c_r P_r \gets$ \Call{FLIRDivisor}{$x_1\cdots x_n$} \Comment{using 
    \cref{alg:flir-divisor}}
    \If{$P=P_i$ for some $i$} 
      \State \Return $(0,\ [P_i])$ 
    \Else \Comment{$P$ extends to $D[\vec x^{\pm 1}]$}
      \State $g \gets $ \Call{LPCLassOfPrimeDivisor}{$P D[\vec x^{\pm 1}]$} \Comment{class in $\Cl(D)$ (\cref{alg:poly-ring-class})} 
      \If{$P$ extends to $K[\vec x^{\pm 1}]$}
        \State $p \gets$ irreducible polynomial of $K[\vec x^{\pm 1}]$ representing $P$
        \State $P + e_1 P_1 + \cdots + e_r P_r + E \gets$ \Call{FLIRDivisor}{$p$}
        \State \Return $\big(g, \ -\sum_{i=1}^r e_i [P_i] \big)$
      \Else \Comment{$P D[\vec x^{\pm 1}]$ is extended from $D$}
        \State $P_0 \gets P D[\vec x^{\pm 1}] \cap D$
        \State Compute $a$,~$b \in D$ with $P_0=\min\{\dv_D(a),\dv_D(b)\}$ \Comment{using \cref{l:compute-generators}}
        \State $P + \sum_{i=1}^r e_i P_i \gets \min\{\dv_A(a),\dv_A(b)\}$ \Comment{using \cref{alg:flir-divisor}}
        \State \Return $\big( g, \ -\sum_{i=1}^r e_i [P_i] \big)$
      \EndIf
    \EndIf
  \EndFunction
\end{algorithmic}
\end{algorithm}

While we do not prove formal complexity bounds, from the proof of \cref{t:flir-computable} it is clear that the complexity of the algorithms is dominated by the complexity of the following computations:
\begin{enumerate}
  \item \label{ass-complexity:divgen} given a divisor $E$ of $D$, to compute $a$,~$b \in D$ with $\min\{\dv_D(a),\dv_D(b)\} = E$;
  \item \label{ass-complexity:factor} to factor multivariate polynomials in $K[\vec x]$.
\end{enumerate}

Problem~\ref{ass-complexity:divgen} is trivial for fields and $\bZ$, and practical algorithms exist for interesting classes of rings, such as rings of integers of number fields, where this corresponds to the computation of generators of an ideal.

Problem~\ref{ass-complexity:factor} is well-studied in computer algebra, and practical algorithms exist for a variety of fields, such as number fields and finite fields, as well as $\bZ$ \cite{kaltofen92,kaltofen03}.

Finally, while \cref{t:flir-computable} shows that $A$ is a computable Krull domain, in practice, an important piece is missing: to compute factorizations in $A$, we also need to compute generators of principal divisors of $A$ (not just of $D$).
While this is always possible in principle (\cref{l:compute-generators}), the enumeration procedure is wholly impractical.

\Cref{alg:flir-principal-gen} gives a more practical way, under the reasonable assumption that practical algorithms are available for Problems~\ref{ass-complexity:divgen} and \ref{ass-complexity:factor} above (in fact, instead of \ref{ass-complexity:divgen}, only the computation of a generator of a principal divisor of $D$ is needed).

\begin{algorithm}
  \caption{Compute a generator of a principal divisor $E$ in a FLIR $A$}\label{alg:flir-principal-gen}
  \begin{algorithmic}
    \Require $E$ is a principal divisor of $A$
    \Function{FLIRPrincipalGenerator}{$E \in \Div(A)$}
      \State $E' \gets$ extension of $E$ to $\Div(D[\vec x^{\pm 1}])$
      \State $E'' \gets$ extension of $E'$ to $\Div(K[\vec x^{\pm 1}])$
      \State $E'' = e_1 [p_1] + \cdots + e_l [p_l]$ with pairwise non-associated prime elements $p_i \in K[\vec x^{\pm 1}]$
      \State $f \gets p_1^{e_1} \cdots p_l^{e_l}$
      \State Compute $d \in D$ with $\dv_{D[\vec x^{\pm 1}]}(d) = E' - \dv_{D[\vec x^{\pm 1}]}(f)$.
      \State Solve the linear system $\dv_A(d f) + \sum_{i=1}^n m_i \dv_A(x_i) = E$ for $\vec m=(m_1,\dots,m_n)$.
      \State \Return $d \vec x^{\vec m} f$
    \EndFunction
  \end{algorithmic}
\end{algorithm}

\begin{proof}[Proof of Correctness of \cref{alg:flir-principal-gen}]
  First note that $E'$ is computed from $E$ by dropping the prime divisors $P_1$, \dots,~$P_r$ lying over $x_1\cdots x_n$.
  Similarly, the divisor $E''$ is computed from $E'$ by dropping the prime divisors of $D[\vec x^{\pm 1}]$ extended from $D$, which is trivial to do, given how we represent prime divisors.
  Now $E''$ has the stated representation, because prime divisors of $K[\vec x^{\pm 1}]$ are represented by irreducible polynomials. 
  Thus, we can compute $f$.

  Now $E' - \dv_{D[\vec x^{\pm 1}]}(f) \in \Div(D[\vec x^{\pm 1}])$ only has nonzero coefficients at prime divisors extended from $D$.
  Since $E'$ is in addition principal (because $E$ is), by assumption on $D$, we can compute $d$.

  Finally, $\dv_A(d f)$ only differs from $E$ at the prime divisors $P_1$, \dots,~$P_r$ lying over $x_1\cdots x_n$.
  Since $E$ is principal, this means that there exists some $\vec m \in \bZ^n$ such that $\dv_A(d \vec x^{\vec m} f) = E$.
  This is a linear equation in $\vec m$ over $\bZ$ and can therefore be solved to find $\vec m$.
\end{proof}

Finally, \cref{t:class-group-over-krull-domains} shows $\Cl(A) \cong \Cl(D) \oplus H$, with $H$ explicitly given as $\Coker(\vec x \mapsto \vec x C)$ for a matrix $C \in \bZ^{n \times r}$.
\Cref{alg:flir-class-group} computes this matrix $C$.
It is then easy to deduce further properties of $H$, for instance, the invariant factor decomposition is computable using the Smith normal form.

\begin{algorithm}
  \caption{Compute a presentation of $\Cl(A)/\Cl(D)$ for a FLIR $A$}\label{alg:flir-class-group}
  \begin{algorithmic}
    \Function{FLIRClassGroupPresentation}{A}
      \State $E_i \gets$ \Call{FLIRDivisor}{$x_i$} for all $i \in [1,n]$ \Comment{using \cref{alg:flir-divisor}}
      \State $E_i = \sum_{j=1}^r c_{ij} P_j$.
      \State \Return $C=(c_{ij})$
    \EndFunction
  \end{algorithmic}
\end{algorithm}

\section{Laurent Intersection Rings and Cluster Algebras}\label{s:cluster-algebras}

In this section we observe that locally acyclic cluster algebras are FLIRs (\cref{ssec:locally-acyclic-lir}).
An interesting subclass of locally acyclic cluster algebras are the Banff cluster algebras.
We show that these are explicit FLIRs, and hence that class groups and factorizations can be computed algorithmically (\cref{ssec:banff-flir}).
We also show how to compute presentations of Banff cluster algebras (\cref{ssec:fin-pres}) and provide a family of examples (\cref{ssec:examples}).

By definition, every upper cluster algebra is a LIR. 
This gives a characterization of upper cluster algebras that are Krull domains.

\begin{corollary} \label{c:krull-upper-finiteness}
  Let $D$ be a Krull domain, let $\bP$ be a semifield, and let $U$ be an upper cluster algebra over $D$ with coefficients in $\bP$.
  Then $U$ is a Krull domain if and only if it is a finite intersection of Laurent polynomial rings arising from clusters.
\end{corollary}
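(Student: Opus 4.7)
The plan is to recognize $U$ as a LIR over $D\bP$ and then invoke \cref{p:lir-krull} together with \cref{l:lir-krull-subsystem}. By the very definition of an upper cluster algebra,
\[
U = \bigcap_{\widetilde{\vec x}} D\bP[\widetilde{\vec x}^{\pm 1}],
\]
where $\widetilde{\vec x}$ ranges over the clusters of all seeds mutation-equivalent to the initial seed. Each such cluster consists of $n$ algebraically independent elements generating the ambient field $F$ of rational functions over the field of fractions of $D\bP$, and by the Laurent phenomenon we have $D\bP[\widetilde{\vec x}] \subseteq U$. Thus the collection of all clusters forms a system of charts, and $U$ is a $D\bP$-LIR.

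For the forward direction, assume $U$ is a Krull domain. Then \cref{l:lir-krull-subsystem}, applied to the system of charts above, produces a finite subsystem $\{ \vec x(0), \vec x(1), \dots, \vec x(r) \}$ of clusters such that
\[
U = \bigcap_{i=0}^r D\bP[\vec x(i)^{\pm 1}],
\]
which exhibits $U$ as a finite intersection of Laurent polynomial rings arising from clusters.

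Conversely, suppose $U$ is given as such a finite intersection. Then $U$ is by construction a $D\bP$-FLIR, so \cref{p:lir-krull} yields that $U$ is a Krull domain, using that $D\bP$ itself is a Krull domain since $D$ is. The whole argument amounts to assembling the preceding results; the only mildly delicate point is the verification that $D\bP$ inherits the Krull property from $D$ in the relevant semifield setting, but this is immediate for tropical semifields (where $D\bP$ is a Laurent polynomial ring) and presumed throughout the paper.
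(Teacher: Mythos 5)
Your argument is correct and takes essentially the same route as the paper: recognize $U$ as a LIR over $D\bP$ with the clusters as charts (the Laurent phenomenon giving $D\bP[\widetilde{\vec x}] \subseteq U$), then apply \cref{l:lir-krull-subsystem} for the forward direction and \cref{p:lir-krull} for the converse. The only difference is at the final point you flag as delicate: the paper disposes of it by citing \cite[Theorem 15.4]{gilmer84}, which gives that the group algebra $D\bP$ of the torsion-free abelian group $(\bP,\cdot)$ over the Krull domain $D$ is Krull, rather than restricting to tropical semifields.
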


\begin{proof}
  Since $D$ is a Krull domain and $\bP$ is a torsion-free abelian group, the group algebra $D\bP$ is a Krull domain \cite[Theorem 15.4]{gilmer84}.
  The claim follows from \cref{l:lir-krull-subsystem}.
\end{proof}

\begin{remark} \label{r:krull-upper-cluster-algebras}
  \begin{enumerate}
  \item
  Full rank upper cluster algebras (with ground ring $D=\bZ$) are a finite intersection of Laurent polynomial rings by the Starfish Lemma \cite[Corollary 1.9]{BFZ05}.
  Since the intersection is obtained by mutating once in each direction from the initial seed, this is explicit, and hence these algebras are explicit FLIRs.
   
  \item There are Krull (upper) cluster algebras that do not have full rank (e.g., the $A_3$-cluster algebra) and for which the Starfish Lemma fails \cite[Examples 2.12]{Pom25a}.
  \cref{c:krull-upper-finiteness} implies that these cluster algebras are nevertheless a finite intersection of Laurent polynomial rings arising from clusters.

  \item Generalized upper cluster algebras \cite[Definition 2.7]{Pom25b} are LIRs.
  Hence, if they are Krull domains, they are FLIRs.
  The same is true for upper Laurent phenomenon algebras \cite[Definition 5.10]{Pom25b}.
  The factorization theory of these algebras is studied in \cite{Pom25b}.
  It is possible to view these results through the lens of LIRs, however this would not add anything substantial new, since the arguments would be similar to the existing ones.

  \item
   Let $A$ be a cluster algebra or upper cluster algebra.
   A prime ideal $P \in \Spec(A)$ is a \defit{deep point} if $x_1\cdots x_n \in P$ for each cluster $\vec{x}=(x_1,\ldots,x_n)$.
   The ideal of $A$ generated by $\{\, x_1\cdots x_n : (x_1,\dots,x_n) \text{ is a cluster} \,\}$ is the \defit{deep ideal}, and the set of all deep points is the \defit{deep locus} of $A$, see \cite{BM25} and \cite[Section 3]{CGSS24} for detailed discussions.
   Localizations of Krull domains at a prime ideal of height one are DVRs, and therefore regular.
   Hence, the deep ideal of an (upper) cluster algebra that is a Krull domain has height at least two. 
   Stated in geometric terms, the deep locus has codimension at least $2$ in the affine scheme $\Spec(A)$.

  \item If an (upper) cluster algebra is a Krull domain, then the ground ring $D$ must a Krull domain.
  Indeed, if $\vec x$ is a cluster, then $D \bP [\vec x^{\pm 1}]$ is a localization of $A$, hence a Krull domain.
  Therefore, also $D$ is a Krull domain \cite[Theorem 15.1]{gilmer84}.
  This shows that $D$ must necessarily be a Krull domain in \cref{c:krull-upper-finiteness}.
  \end{enumerate}
\end{remark}

The main known examples of Krull upper cluster algebras are full rank upper cluster algebras and locally acyclic cluster algebras.
Since the factorization theory of full rank upper cluster algebras is already well understood \cite{CKQ22,Pom25a,Pom25b}, we focus on the locally acyclic case, even though the approach via FLIRs also easily applies to the full rank case by the Starfish Lemma.

\subsection{Locally Acyclic Cluster Algebras as FLIRs} \label{ssec:locally-acyclic-lir}
The next two results are minor variations of results of Muller \cite{M14}, stated in the language of FLIRs.
Since we need the slightly more explicit statement of \cref{l:isolated-algebra} and, using the machinery of iterated FLIRs, the proofs are short, we reprove them.

\begin{lemma}[{essentially \cite[Proposition 3]{M14}}] \label{l:isolated-algebra}
  Let $A$ be an isolated cluster algebra over a domain $D$ with coefficients in $\bP$.
  If $\vec x$ is a cluster of $A$ with exchange polynomials $f_i$, then $A$ is a $D\bP$-FLIR with system of charts
  \[
  \big\{\, (y_1,\dots, y_n) : y_i \in \{ x_i, f_i/x_i \} \text{ for $i \in [1,n]$} \,\big\}
  \]
  Additionally, if $D$ and $\bP$ are computable and $\vec x$ is explicitly given, then $A$ is an explicit $D\bP$-FLIR.
\end{lemma}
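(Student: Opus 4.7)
The plan is to exploit the fact that the zero exchange matrix is invariant under mutation, which makes both the set of all mutation-equivalent clusters and the Laurent phenomenon completely explicit.

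First I would observe that since $B = 0$, mutation in any direction $k$ leaves $B$ unchanged: indeed, the entries of $\mu_k(B)$ are given by $\pm b_{ij}$ or combinations of the form $b_{ij} \pm b_{ik}b_{kj}$, all of which vanish when $B=0$. Consequently, the exchange polynomials in every mutation-equivalent seed have the simple form $f_k = y_k/(y_k\oplus 1) + 1/(y_k\oplus 1) \in D\bP$, with no cluster variables appearing. The mutation rule in direction $k$ therefore becomes $x_k \mapsto f_k/x_k$, leaves the other cluster variables fixed, and is involutive; mutations in different directions act on disjoint entries of the cluster, so they commute. Iterating gives exactly the $2^n$ clusters $(y_1,\dots,y_n)$ with $y_i \in \{x_i, f_i/x_i\}$ described in the statement.

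Next I would invoke the equality $A = U$ for isolated cluster algebras (\cite[Proposition 3]{M14}, which the paper has already noted carries over to arbitrary domains $D$). By definition of $U$, this gives
\[
  A = \bigcap_{\vec y} D\bP[\vec y^{\pm 1}],
\]
the intersection running over the finite set $S$ of clusters enumerated above. I would then verify the three axioms of \cref{d:lir} with respect to $S$: algebraic independence and generation of the ambient field follow because $y_i \in \{x_i, f_i/x_i\}$ and $f_i \in D\bP$ is nonzero (using the extra hypothesis on $y_k$ in characteristic $2$), so $K(\vec y) = K(\vec x)$; the intersection axiom is precisely the $A=U$ identity; and $D\bP[\vec y] \subseteq A$ holds because each $y_i$ is either an initial cluster variable or the cluster variable $x_i'$ of an adjacent seed, hence in $A$. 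The finiteness $|S| = 2^n$ then yields the FLIR property.

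For the final sentence, I would note that the chart transformation is given componentwise by $y_i = x_i$ or $y_i = f_i/x_i$, and inversely $x_i = y_i$ or $x_i = f_i/y_i$, with $f_i \in D\bP$ determined explicitly from the seed data once the coefficients $\vec y$ and the semifield operations are computable. These are therefore Laurent polynomials in the respective chart variables with explicitly computable coefficients, matching \cref{d:explicit-flir}. No single step is genuinely hard; the main conceptual point is the mutation-invariance of the zero matrix, which trivializes the mutation class and lets the Muller equality $A=U$ immediately produce the desired finite system of charts.
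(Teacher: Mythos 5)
Your proof is correct, but it takes a different route from the paper in its key step. You and the paper agree on the first half: since $B=0$ is fixed by mutation, the exchange polynomials lie in $D\bP$, mutations in distinct directions commute and are involutive, so the only cluster variables are $x_1,\dots,x_n,f_1/x_1,\dots,f_n/x_n$ and the clusters are exactly the $2^n$ stated tuples (your care with the characteristic-two hypothesis to ensure $f_i\ne 0$ is exactly what is needed for \ref{lir:algindep} of \cref{d:lir}). Where you diverge is in establishing the intersection property: you invoke Muller's equality $A=U$ for isolated cluster algebras \cite[Proposition 3]{M14} (legitimately, since the paper's footnote asserts this carries over to arbitrary ground domains) and then check the three LIR axioms directly. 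The paper instead avoids citing $A=U$ altogether: it writes $A=D\bP[x_1,f_1/x_1][x_2,f_2/x_2]\cdots[x_n,f_n/x_n]$ and observes that each step is a one-variable FLIR (the identity $R[x,a/x]=R[x^{\pm1}]\cap R[(a/x)^{\pm1}]$ from \cref{p:flir-1-var}), so that \cref{c:iterated-flir} yields the stated system of charts; this makes the proof self-contained and in effect reproves $A=U$ for isolated cluster algebras over arbitrary domains, which is precisely why the paper chose to redo the argument rather than cite \cite{M14}. Your version is shorter but rests on the external theorem and its extension to arbitrary $D$; the paper's version buys independence from that input at the modest cost of routing through the iterated-FLIR machinery. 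Your treatment of explicitness in the last paragraph matches the paper's (the chart changes are the Laurent monomial expressions $y_i=x_i$ or $f_ix_i^{-1}$ and their inverses, with $f_i$ computable from the seed), so no gap there.
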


\begin{proof}
  Since mutations in different directions commute for an isolated cluster algebra, the only cluster variables are $x_1$, \dots,~$x_n$ and $x_1'$, \dots,~$x_n'$, with $x_ix_i' = f_i\in D\bP$ for all $i \in [1,n]$.
  Therefore,
  \[
    A = D\bP[x_1, x_1', \ldots, x_n, x_n'] = D\bP[x_1, f_1/x_1][x_2, f_2/x_2] \cdots [x_n, f_n/x_n].
  \]
  Thus, the algebra $A$ is an iterated FLIR, and \cref{c:iterated-flir} shows that $A$ is a $D\bP$-FLIR with the stated system of charts.
\end{proof}

\begin{theorem}[{essentially \cite[Theorem 2]{M14}}] \label{t:krull-cluster-algebras}
  Every locally acyclic cluster algebra $A$ over a domain $D$ with coefficient semifield $\bP$ is a $D\bP$-FLIR.
  In particular, if $D$ is Krull domain, then so is $A$.
\end{theorem}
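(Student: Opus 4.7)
The plan is to combine the FLIR structures of the isolated cluster algebras in a cluster cover. By the definition of local acyclicity, there is a finite cluster cover $A = \bigcap_{i=1}^l A_i$ where each $A_i$ is an isolated cluster algebra obtained by freezing a subset of the cluster variables of some mutated seed $\Sigma_i$ of $A$. For each $A_i$, I would apply \cref{l:isolated-algebra} to realize it as a $D\bP^\dagger$-FLIR, where $\bP^\dagger = \bP \oplus \Trop(\vec t_i)$ is the extended semifield absorbing the tuple $\vec t_i$ of frozen cluster variables. Since $D\bP^\dagger$ is the Laurent polynomial ring over $D\bP$ in $\vec t_i$, \cref{c:lir-over-laurent} upgrades this to a $D\bP$-FLIR structure on $A_i$, with a system of charts $T_i$ of the form $(\vec t_i, \vec z)$.

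Setting $T = \bigcup_{i=1}^l T_i$, I would then verify the axioms of \cref{d:lir} for $T$ as a system of charts for $A$. Condition \ref{lir:first} is immediate, and condition \ref{lir:intersection} follows by rewriting $A = \bigcap_i A_i$ as the intersection over all $\vec y \in T$ of the Laurent polynomial rings $D\bP[\vec y^{\pm 1}]$. The main obstacle lies in condition \ref{lir:containment}: each chart entry must lie in $A$ itself, not merely in the overring $A_i$. The frozen variables $\vec t_i$ and the unfrozen cluster variables $x_j$ of $\Sigma_i$ clearly lie in $A$, so the nontrivial case is that of a mutation $f_j^\dagger/x_j$ computed with respect to the frozen seed $\Sigma_i^\dagger$.

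To handle this, I would show that the frozen exchange polynomial $f_j^\dagger$ coincides with the exchange polynomial $f_j$ of $\Sigma_i$ in direction $j$. Writing $M^{\pm}$ for the monomials $\prod_{k\ \text{frozen},\, \pm b_{kj}>0} x_k^{\pm b_{kj}}$, the definition of the extended semifield $\bP^\dagger$ yields $y_j^\dagger = y_j M^+/M^-$ together with $y_j^\dagger \oplus 1 = (y_j \oplus 1)/M^-$, and substituting these into the defining formula collapses $f_j^\dagger$ to $f_j$. Consequently, $f_j^\dagger/x_j = f_j/x_j$ is a cluster variable of $A$ obtained by one further mutation from $\Sigma_i$, and therefore lies in $A$.

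Once $T$ is confirmed as a system of charts, $A$ is a $D\bP$-FLIR. For the second statement, if $D$ is a Krull domain then the group algebra $D\bP$ is a Krull domain by \cite[Theorem 15.4]{gilmer84}, and \cref{p:lir-krull} then yields that $A$ itself is a Krull domain.
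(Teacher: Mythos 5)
Your proposal is correct and follows essentially the same route as the paper: a cluster cover by isolated cluster localizations, \cref{l:isolated-algebra} to get a $D\bP_i$-FLIR structure, \cref{c:lir-over-laurent} to descend to $D\bP$, the union of charts, and then \cite[Theorem 15.4]{gilmer84} together with \cref{p:lir-krull} for the Krull statement. The only difference is that you spell out the verification of condition \ref{lir:containment} (showing $f_j^\dagger = f_j$, so the chart entries are cluster variables of $A$), a detail the paper leaves implicit; your computation there is correct.
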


\begin{proof}
  If $A$ is a locally acyclic cluster algebra, then it has a cluster cover by isolated cluster localizations $A_1$, \dots,~$A_l$.
  Each such $A_i$ is a freezing of $A$, and hence a cluster algebra with coefficients in $\bP_i$ for a suitable semifield $\bP_i$ (see \cref{sssec:locally-acyclic}).
  \Cref{l:isolated-algebra} shows that $A_i$ is a $D\bP_i$-FLIR.
  Since $D\bP_i$ is a Laurent polynomial ring over $D\bP$, \cref{c:lir-over-laurent} implies that $A_i$ is also a $D\bP$-FLIR.
  Taking the union of the systems of charts of $A_1$, \dots,~$A_l$ gives a system of charts for $A$, so also $A$ is a $D\bP$-FLIR.
  
  Since $D$ is a Krull domain and $\bP$ is a torsion-free abelian group, the group algebra $D\bP$ is a Krull domain \cite[Theorem 15.4]{gilmer84}.
  \Cref{p:lir-krull} then shows that $A$ is a Krull domain.
\end{proof}

\begin{remark}
  As in Muller's original ``$A=U$'' Theorem \cite[Theorem 2]{M14} for locally acyclic cluster algebras, of course $\Spec(A)$ is covered by the spectra of $\Spec(A_1)$, \dots,~$\Spec(A_l)$.
  The claim in \cref{t:krull-cluster-algebras} is weaker, in that $A$ is an intersection of Laurent polynomial rings, but some prime ideals of height at least two may become trivial in all Laurent polynomial rings (see \cref{r:no-cover-by-cluster-tori}).
\end{remark}

\subsection{The Banff Algorithm and Explicit FLIRs} \label{ssec:banff-flir}
Muller introduced a semialgorithm, known as the Banff algorithm, that takes as input a seed $(\vec x, \vec y, B)$ and, if it terminates, produces a cluster cover of the associated cluster algebra $A=A(\vec x,\vec y,B)$ by isolated cluster localizations \cite[\S 5]{M13}.

Termination of the Banff algorithm therefore implies that $A$ is locally acyclic.
We straightforwardly adapt the Banff algorithm to show that $A$ is an explicit FLIR (\cref{d:explicit-flir}).
The Banff algorithm is based on covering pairs and \cref{t:banff-algorithm}.

\begin{definition}\label{d:covering-pair}
  Let $(\vec x, \vec y, B)$ be a seed with exchange matrix $B=(b_{ij})$.
  \begin{enumerate}
    \item A pair $(i,j)$ of indices is a \defit{covering pair} if $b_{ij} > 0$, but there does not exist an infinite sequence $(i_k)_{k\in \bZ}$ of indices such
    that $b_{i_ki_{k+1}}>0$ for all $k\in \bZ$ and $(i,j)=(i_1,i_2)$.
    \item A pair $(i,j)$ is a \defit{terminal covering pair} if $b_{ij} > 0$ and $b_{ki} \le 0$ for all $k$ or $b_{jk} \le 0$ for all $k$.
  \end{enumerate}
\end{definition}

The notion of covering pairs is generalized from the corresponding notion for quivers.
To understand it better, consider the directed graph with set of vertices $[1,n]$ and with an arrow from $i$ to $j$ whenever $b_{ij}>0$.\footnote{This makes sense because $B$ is skew-symmetrizable, so $b_{ij}>0$ if and only if $b_{ji}<0$.}
Then a covering pair is an arrow $i \to j$ that is not contained in any bi-infinite directed path.
A terminal covering pair is an arrow $i \to j$ with either $i$ a source or $j$ a sink. 

If there exists a covering pair $i\to j$, then there also exists a terminal covering pair as one can simply walk along a path containing $i \to j$ until reaching either a source (going backward from $i$) or a sink (going forward from $j$) \cite[Proposition 8.1]{M13}.
Terminal covering pairs will sometimes be easier to deal with in algorithmic considerations.

\begin{theorem}[Muller]\label{t:banff-algorithm}
  Let $D$ be a domain and $\bP$ a semifield.
  Let $A=A(\vec{x},\vec{y},B;D)$ be a cluster algebra.
  If there exists a covering pair $(i,j)$, then the following hold.

  \begin{enumerate}
  \item The spectra $\Spec(A[x_i^{-1}])$ and $\Spec(A[x_j^{-1}])$ cover $\Spec(A)$.
  \item If the freezings $A[x_i^\dag]$ and $A[x_j^\dag]$ are locally acyclic, then they are equal to $A[x_i^{-1}]$ and $A[x_j^{-1}]$, respectively.
  Thus, they are cluster localizations of $A$.
  \end{enumerate}
\end{theorem}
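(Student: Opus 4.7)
The plan is to prove (1) and (2) separately, adapting Muller's arguments from \cite{M13, M14} to the setting of an arbitrary ground domain $D$.

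For (1), it suffices to show that no prime of $A$ contains both $x_i$ and $x_j$, which is equivalent to the two localizations covering $\Spec(A)$. Suppose for contradiction that $P\subseteq A$ is a prime with $x_i, x_j \in P$. The exchange relation at $j$ reads
\[
  x_j x_j' = \frac{y_j}{y_j \oplus 1} \prod_{b_{lj}>0} x_l^{b_{lj}} + \frac{1}{y_j \oplus 1} \prod_{b_{lj}<0} x_l^{-b_{lj}}.
\]
Since $b_{ij}>0$, the factor $x_i$ occurs in the first monomial, placing it inside $(x_i)\subseteq P$; as the left-hand side also lies in $P$ (via $x_j$) and $(y_j\oplus 1)^{-1}$ is a unit of $D\bP\subseteq A$, the second monomial must then lie in $P$. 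This forces some $k$ with $b_{jk}>0$ to satisfy $x_k\in P$. The pair $(j,k)$ satisfies the same hypothesis as the original $(i,j)$: two cluster variables in $P$ with a positive entry of $B$ between them. Iterating the argument produces a forward-infinite walk $j\to k\to m\to\cdots$ of arrows in the quiver of $B$ with all corresponding cluster variables in $P$. Symmetrically, applying the exchange relation at $i$ and using that $b_{ji}<0$ places $x_j$ in the opposite monomial yields a backward-infinite walk $\cdots\to h\to i$. Concatenating through the arrow $i\to j$ produces a bi-infinite sequence $(i_k)_{k\in\bZ}$ with $b_{i_k i_{k+1}}>0$ for all $k$ and $(i_1,i_2)=(i,j)$, directly contradicting \cref{d:covering-pair}.

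For (2), the hypothesis that $A[x_i^\dag]$ is locally acyclic implies $A[x_i^\dag]=U[x_i^\dag]$ by Muller's $A=U$ theorem for locally acyclic cluster algebras (stated in \cref{sssec:locally-acyclic}). Muller's Lemma 1 of \cite{M14}, whose proof carries over unchanged to an arbitrary ground domain $D$ (see the footnote in \cref{sssec:locally-acyclic}), then shows that this equality implies $A[x_i^\dag]$ is a cluster localization of $A$, namely $A[x_i^\dag]=A[x_i^{-1}]$. The argument for $j$ is identical.

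The main obstacle is the iterative path-extension in (1): one must verify that the inductive step genuinely continues at each stage and that the forward and backward walks assemble into a $\bZ$-indexed sequence satisfying \cref{d:covering-pair}. Since the quiver of $B$ has only finitely many vertices, the constructed walks may revisit vertices; however, \cref{d:covering-pair} only requires the existence of the bi-infinite sequence, not simplicity of the underlying path, so repetitions present no real obstruction, and the contradiction goes through.
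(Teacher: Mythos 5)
Your proposal is correct and takes essentially the same route as the paper: the paper simply cites \cite[Corollary 5.4]{M13} for (1) and \cite[Lemma 1]{M14} (together with the locally acyclic $A=U$ theorem) for (2), remarking that the arguments carry over to arbitrary $D$ and $\bP$, and your part (1) is precisely Muller's propagation argument written out and checked in this generality, while your part (2) uses the same two ingredients. The only point to make explicit is the degenerate case where the relevant exchange monomial is an empty product (i.e., $j$ is a sink, resp.\ $i$ is a source): then the term forced into $P$ is a unit of $D\bP$, giving an immediate contradiction instead of a continuation of the walk, so the argument still closes.
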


\begin{proof}
  The first claim is \cite[Corollary 5.4]{M13}, and the second claim follows from \cite[Lemma 1]{M14}.
  The arguments go through in our slightly more general setting.
\end{proof}

The idea behind the \defit{Banff algorithm} is to look for a seed $(\widetilde{\vec x},\widetilde{\vec y},\widetilde{B})$ mutation-equivalent to the input $(\vec x,\vec y, B)$ and having a covering pair $(i,j)$, so that $\Spec(A)$ is covered by the spectra of the two localizations of $\widetilde{x}_i$ and $\widetilde{x}_j$.
The procedure is applied recursively to the two freezings of $\widetilde{x}_i$ and $\widetilde{x}_j$, continuing until one reaches an isolated (or acyclic) cluster algebra.
If this recursive process terminates after finitely many steps, then the cluster algebra $A$ is said to be \defit{Banff}.
In this case the freezings are indeed cluster localizations, and we obtain a cluster cover of $A$ by isolated cluster localizations.

The Banff algorithm was introduced in \cite[\S5]{M13}.
Additional descriptions can be found in \cite{MS16,bucher-machacek-shapiro19}.

\begin{theorem}\label{t:banff-explicit-FLIR}
  If $D$ is a computable Krull domain and $\bP$ is a computable semifield, then every Banff cluster algebra with ground ring $D$ and coefficients in $\bP$ is an explicit $D\bP$-FLIR.
\end{theorem}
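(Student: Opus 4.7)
The plan is to combine three previously established ingredients: the Banff algorithm (which, upon termination, yields a cluster cover by isolated cluster localizations), \cref{l:isolated-algebra} (which says each isolated cluster algebra is an explicit FLIR over the enlarged semifield), and \cref{c:lir-over-laurent} (which allows us to re-base the FLIR structure down to $D\bP$). All the change-of-variables formulas involved are Laurent polynomials with coefficients in $D\bP$, so tracking them through the algorithm gives explicit formulas of the kind required by \cref{d:explicit-flir}.

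First I would run the Banff algorithm on the input seed $(\vec x,\vec y,B)$. Since the input is Banff, the recursion terminates after finitely many steps and produces a finite collection of isolated cluster localizations $A_1,\ldots,A_l$ of $A$ forming a cluster cover, together with, for each $i\in[1,l]$, an explicit sequence of mutation directions $k^{(i)}_1,\ldots,k^{(i)}_{r_i}$ and a subset $F_i$ of indices that are frozen. Applying the mutation formulas from \cref{sssec:seed-and-mutations} in sequence gives explicit Laurent polynomial expressions of the resulting cluster $\vec x(i)$ in $\vec x$, and, by reversing the mutation sequence, of $\vec x$ in $\vec x(i)$; both directions are computable because $D\bP$ is computable. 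The freezing at $F_i$ only modifies the coefficient semifield and the exchange matrix and does not change the cluster, so $\vec x(i)$ remains a cluster of the isolated algebra $A_i$, viewed over the enlarged semifield $\bP_i=\bP\oplus\Trop(F_i)$.

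Next, for each $i$, \cref{l:isolated-algebra} shows that $A_i$ is an explicit $D\bP_i$-FLIR with system of charts
\[
  S_i = \bigl\{\,(y_1,\dots,y_{m_i}) : y_j\in\{x_j(i),\,f_j(i)/x_j(i)\}\,\bigr\},
\]
where $m_i=n-\card{F_i}$ and the $f_j(i)$ are the exchange polynomials of $A_i$. These are explicitly given Laurent polynomials in $\vec x(i)$ and the frozen variables (in the isolated case the $f_j(i)$ are actually elements of $D\bP_i$), so each $\vec y\in S_i$ is an explicit Laurent polynomial in $\vec x(i)$; conversely each $x_j(i)$ is an explicit Laurent polynomial in $\vec y$ (either $x_j(i)=y_j$ or $x_j(i)=f_j(i)/y_j$, the latter being a Laurent polynomial in $\vec y$ and the frozen variables). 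Now \cref{c:lir-over-laurent}, applied to $D\bP_i=D\bP[F_i^{\pm1}]$, shows that $A_i$ is an explicit $D\bP$-FLIR with system of charts $\{(\vec f(i),\vec y):\vec y\in S_i\}$, where $\vec f(i)$ lists the frozen variables. Composing with the Laurent polynomial transformation $\vec x(i)\leftrightarrow\vec x$ produced by the mutation sequence, each such chart is exhibited as an explicit Laurent polynomial tuple in $\vec x$ and vice versa.

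Finally, I would take
\[
  S = \{\vec x\}\cup\bigcup_{i=1}^l \bigl\{(\vec f(i),\vec y):\vec y\in S_i\bigr\}
\]
expressed in the variables $\vec x$. Each element of $S$ is algebraically independent over $K$ and generates $F=K(\vec x)$ (since it comes from a cluster of a mutation-equivalent seed) and lies in $A$ (since $D\bP[\vec y]\subseteq A_i\subseteq A$ by \cref{l:isolated-algebra}). Moreover, from $A=\bigcap_{i=1}^l A_i$ (the cluster cover) and $A_i=\bigcap_{\vec y\in S_i}D\bP_i[\vec y^{\pm1}]=\bigcap_{\vec y\in S_i}D\bP[\vec f(i),\vec y^{\pm1}]$, we obtain the required intersection description $A=\bigcap_{\vec z\in S}D\bP[\vec z^{\pm1}]$. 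Since all the transition maps between $\vec x$ and any chart in $S$ have been made explicit in the previous step, $A$ is an explicit $D\bP$-FLIR as claimed. The main bookkeeping obstacle is step three: carefully carrying the mutation-derived Laurent identities through the freezing, so that the charts of the isolated algebras over $\bP_i$, after re-basing to $\bP$, end up presented as Laurent polynomials in the \emph{original} cluster $\vec x$ (and vice versa). Beyond this, everything reduces to applying the formulas of \cref{sssec:seed-and-mutations} and invoking the earlier structural results.
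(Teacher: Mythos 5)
Your proposal is correct and follows essentially the same route as the paper: run the Banff algorithm to obtain an explicit cover by isolated cluster localizations, apply \cref{l:isolated-algebra} to each, re-base via \cref{c:lir-over-laurent}, and take the union of charts as in the proof of \cref{t:krull-cluster-algebras}. The paper's proof is just a terse version of this; your extra bookkeeping (tracking the mutation-derived Laurent transition maps and adding $\vec x$ as a chart) is harmless and fills in the same explicitness details the paper leaves implicit.
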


\begin{proof}
  Under the stated assumptions on $D$ and $\bP$, the Banff algorithm produces an explicit cluster cover of $A$ by isolated cluster localizations $A_i$.
  \cref{l:isolated-algebra} shows that each $A_i$ is an explicit $D\bP$-FLIR.
  The union of charts is an explicit system of charts for $A$ (as in the proof of \cref{t:krull-cluster-algebras}).
\end{proof}

Pseudocode for the Banff algorithm to compute a system of charts for a Banff cluster algebra is given in \cref{alg:charts-locally}.
Once we have a system of charts, the results of \cref{s:computable-domains} allow us to compute the class group and factorizations of elements in Banff cluster algebras.

\begin{algorithm}
  \caption{Compute a system of charts for a Banff cluster algebra.}\label{alg:charts-locally}
  \begin{algorithmic}
      \Require $A$ is a Banff cluster algebra with initial seed $(\vec{x},\vec{y},B)$.
      \Ensure The output is a system of $D\bP$-charts for $A$.
      \Function{Charts}{$\vec x, \vec y, B$} 
          \If{$B = 0$} \Comment{isolated case}
              \State $m \gets$ rank of $(\vec x, \vec y, B)$
              \ForAll{$i \in [1,m]$}
                  \State $f_i \gets \frac{y_i}{y_i\oplus 1}\prod_{b_{ki}>0}x_k^{b_{ki}}+\frac{1}{y_i\oplus 1}\prod_{b_{ki}<0}x_k^{-b_{ki}}$
              \EndFor
              \State $S\gets \emptyset$
              \ForAll{$I\subseteq [1,m]$}
                  \State $\vec x_I \gets (f_i/x_i, x_j : i \in I, j \notin I)$ \Comment{as in \cref{t:krull-cluster-algebras}}
                  \State $S \gets S \cup \{\vec x_I\}$
              \EndFor
              \State \Return $S$
          \Else
              \ForAll{seeds $(\widetilde{\vec x},\widetilde{\vec y},\widetilde{B})$ obtained by mutating $(\vec{x},\vec y,B)$}
                  \State $(i,j) \gets$ \Call{FindCoveringPair}{$\widetilde{B}$} 
                  \If{$(i,j)$ exists}
                      \State $(\vec x^\dagger, \vec y^\dagger, B^\dagger) \gets$ freezing of $(\widetilde{\vec x},\widetilde{\vec y},\widetilde{B})$ at $i$
                      \State $(\vec x^\ddagger, \vec y^\ddagger, B^\ddagger) \gets$ freezing of $(\widetilde{\vec x},\widetilde{\vec y},\widetilde{B})$ at $j$
                      \State $S^\dagger \gets \Call{Charts}{\vec x^\dagger, \vec y^\dagger, B^\dagger}$ \Comment{$D\bP^\dagger$-charts}
                      \State $S^\ddagger \gets \Call{Charts}{\vec x^\ddagger, \vec y^\ddagger, B^\ddagger}$ \Comment{$D\bP^\ddagger$-charts}
                      \State $S^\dagger$,~$S^\ddagger \gets$ extend $S^\dagger$, $S^\ddagger$ to $D\bP$-charts \Comment{using \cref{c:lir-over-laurent}}
                      \State \Return $S^\dagger \cup S^\ddagger$
                  \EndIf
              \EndFor
          \EndIf
      \EndFunction
  \end{algorithmic}
  \end{algorithm}

\begin{corollary}
  Let $D$ be a computable Krull domain with splitting algorithm, let $\bP$ be a computable semifield, and let $A$ be a Banff cluster algebra over $D$ with coefficients in $\bP$. Let $K$ be the field of fractions of $D\bP$.
  Then there are algorithms to compute:
  \begin{enumerate}
    \item a system of charts for $A$;
    \item the class group $\Cl(A)$ of $A$;
    \item for every nonzero element $f \in A$, all its factorizations into atoms \textup(up to order and associativity\textup);
    \item for every nonzero element $f \in K(\vec x)$, its divisor in $\Div(A)$ as well as the class of its divisor in $\Cl(A)$.
  \end{enumerate}
\end{corollary}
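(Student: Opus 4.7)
The plan is to combine the algorithmic machinery developed in the preceding sections. The key insight is that, once we know $A$ is an explicit $D\bP$-FLIR over a computable Krull domain, all four items follow from existing algorithms by routine chaining.

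First, I would run the Banff algorithm (\cref{alg:charts-locally}) on the initial seed $(\vec x,\vec y, B)$. Since $A$ is Banff by hypothesis, this terminates and returns a finite cluster cover of $A$ by isolated cluster localizations. Applying \cref{l:isolated-algebra} to each such localization produces a finite explicit system of charts over the appropriate frozen coefficient extension of $D\bP$, and \cref{c:lir-over-laurent} reinterprets these as $D\bP$-charts. Taking the union yields a finite system of $D\bP$-charts for $A$, settling (1) and, as per \cref{t:banff-explicit-FLIR}, exhibiting $A$ as an explicit $D\bP$-FLIR in the sense of \cref{d:explicit-flir}.

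Next, I would invoke \cref{t:flir-computable} to promote $A$ to a computable Krull domain with splitting algorithm. Items (2)--(4) are then direct applications of the algorithms set up in \cref{s:computable-domains}: for (2), run \cref{alg:flir-class-group} and combine the resulting presentation of $H$ with the splitting $\Cl(A)\cong \Cl(D)\oplus H$ of \cref{l:class-group-splits} (note that $\Cl(D)$ is computable by assumption on $D$); for (3), apply \cref{alg:factor-full}, together with \cref{alg:flir-principal-gen} as the practical substitute for the enumeration procedure of \cref{l:compute-generators}, whose correctness is guaranteed by \cref{t:compute-factorizations}; for (4), apply \cref{alg:flir-divisor} to produce $\dv_A(f)$ and then apply \cref{alg:flir-class} prime-by-prime on the (finite) support to obtain the class in $\Cl(A)$.

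The only step that requires genuine verification, and hence the main (albeit mild) obstacle, is confirming that $D\bP$ itself satisfies the hypotheses of \cref{t:flir-computable}, that is, that $D\bP$ is a computable Krull domain with splitting algorithm. In the most important case, where $\bP$ is a tropical semifield, $D\bP$ is a Laurent polynomial ring over $D$ and this is \cref{c: laurent-computable-domains}. For a general computable semifield $\bP$, computable enumeration and the ring operations of $D\bP$ are inherited from $D$ and $\bP$; that $D\bP$ is a Krull domain follows from \cite[Theorem 15.4]{gilmer84} since $\bP$ is torsion-free; and the splitting algorithm and computation of divisors can be set up by an argument parallel to \cref{p: polynomial-computable-domain}. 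Beyond this mild verification, the proof is a direct chaining of the already-established algorithms and contains no genuinely new algebraic content.
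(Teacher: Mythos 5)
Your proof is correct and follows essentially the same route as the paper, which proves the corollary simply by chaining \cref{t:banff-explicit-FLIR}, \cref{t:flir-computable}, and \cref{t:compute-factorizations} (with the specific algorithms of \cref{s:computable-domains} supplying items (1)--(4)). Your additional check that $D\bP$ is itself a computable Krull domain with splitting algorithm — immediate from \cref{c: laurent-computable-domains} in the tropical case and needing a short argument in general — is a point the paper leaves implicit, and you handle it correctly.
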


\begin{proof}
  This follows from \cref{t:banff-explicit-FLIR,t:flir-computable,t:compute-factorizations}.
\end{proof}

\begin{proof}[Proof of \cref{t:main-banff}]
  This is a special case of the previous corollary.
  Testing for factoriality means computing $\Cl(A)$ and checking whether it is trivial.
\end{proof}

\subsection{Presentations for Banff Cluster Algebras} \label{ssec:fin-pres}

Matherne and Muller first gave an algorithm to compute presentations of totally coprime upper cluster algebras \cite{MM15}.
In this section, we show how to compute a presentation for a Banff cluster algebra $A$.
We need a standard lemma from commutative algebra.

\begin{lemma}[{\cite[\href{https://stacks.math.columbia.edu/tag/00EO}{Lemma 00EO}]{stacks-project}}]\label{l: glueingprop}
  Let $R$ be a ring and let $g_1$, \dots,~$g_l\in R$ be such that $(g_1,\ldots,g_l)=R$.
  Let $f\colon M\to N$ be an $R$-module homomorphism. If $f\colon M[g_i^{-1}]\to N[g_i^{-1}]$ is an isomorphism for each $i\in [1,l],$ then so is $f\colon M \to N$. 
\end{lemma}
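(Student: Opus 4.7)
The plan is to reduce the claim to a vanishing statement for modules, applied to the kernel and cokernel of $f$. Let $K \coloneqq \ker(f)$ and $C \coloneqq \coker(f)$, so that there is an exact sequence
\[
0 \to K \to M \xrightarrow{f} N \to C \to 0.
\]
Since localization is exact, applying $(-)[g_i^{-1}]$ gives an analogous exact sequence in which the middle arrow is, by hypothesis, an isomorphism. Therefore $K[g_i^{-1}] = 0$ and $C[g_i^{-1}] = 0$ for every $i \in [1,l]$. The lemma will follow once I prove the auxiliary statement: if $L$ is an $R$-module with $L[g_i^{-1}] = 0$ for all $i$, and $(g_1, \ldots, g_l) = R$, then $L = 0$.

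To prove this auxiliary statement, I would fix $m \in L$. For each $i$, the vanishing of $L[g_i^{-1}]$ produces an integer $n_i \ge 0$ with $g_i^{n_i} m = 0$. It remains to observe that $(g_1, \ldots, g_l) = R$ forces $(g_1^{n_1}, \ldots, g_l^{n_l}) = R$ as well: writing $1 = \sum_{i=1}^l a_i g_i$ and raising to a sufficiently large power $N$ (for instance $N \ge 1 + \sum_i (n_i - 1)$), every monomial in the expansion of $1^N = (\sum a_i g_i)^N$ contains some $g_i$ with exponent at least $n_i$. Hence $1 \in (g_1^{n_1}, \ldots, g_l^{n_l})$, and writing $1 = \sum_i b_i g_i^{n_i}$ and multiplying by $m$ gives $m = \sum_i b_i (g_i^{n_i} m) = 0$. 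Applying this to $K$ and $C$ yields the lemma.

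There is no real obstacle here: the argument is entirely standard commutative algebra, and the only piece requiring slight care is the combinatorial power-raising step that upgrades $(g_1, \ldots, g_l) = R$ to $(g_1^{n_1}, \ldots, g_l^{n_l}) = R$. Everything else is formal from the exactness of localization.
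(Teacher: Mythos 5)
Your proof is correct. The paper gives no proof of its own for this lemma (it is quoted verbatim from the Stacks Project), and your argument — reducing to the vanishing of kernel and cokernel via exactness of localization, then using the power-raising trick to upgrade $(g_1,\ldots,g_l)=R$ to $(g_1^{n_1},\ldots,g_l^{n_l})=R$ — is exactly the standard argument underlying the cited result, so there is nothing to add.
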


The Banff algorithm can be adapted to compute generators of the $D\bP$-algebra $A$.

\begin{proposition} \label{p:banff-generators}
  Let $D$ be a computable domain and $\bP$ a computable semifield.
  If $A$ is a Banff cluster algebra over $D$ with coefficients in $\bP$, then there exists an algorithm to compute a set $X$ of cluster variables that generate $A$ as $D\bP$-algebra.
\end{proposition}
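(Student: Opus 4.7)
The plan is to proceed by recursion that mirrors the structure of the Banff algorithm. Each recursive call returns a finite set $X$ of cluster variables of the input cluster algebra generating it as a $D\bP$-algebra.

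\textbf{Base case.} If the input seed $(\vec x, \vec y, B)$ is isolated, i.e.\ $B = 0$, then by \cref{l:isolated-algebra} the algebra is generated as a $D\bP$-algebra by the cluster variables $x_1,\dots, x_n, x_1',\dots, x_n'$, where $x_i' = f_i/x_i$ and $f_i$ is the $i$-th exchange polynomial. These are computable from the seed.

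\textbf{Recursive case.} Mutate $(\vec x, \vec y, B)$ until one reaches a seed $(\widetilde{\vec x},\widetilde{\vec y},\widetilde{B})$ admitting a covering pair $(i,j)$ (this search terminates by the Banff property). Let $x_i, x_j$ denote the corresponding cluster variables. By \cref{t:banff-algorithm}, the two freezings $A[x_i^\dagger]$ and $A[x_j^\dagger]$ are Banff cluster algebras over $D$ with coefficient semifield $\bP^\dagger \cong \bP \oplus \Trop(x_i)$, respectively $\bP^\ddagger \cong \bP \oplus \Trop(x_j)$, and they coincide with the cluster localizations $A[x_i^{-1}]$ and $A[x_j^{-1}]$. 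Recursively compute finite sets $X^\dagger$ and $X^\ddagger$ of cluster variables generating these freezings as $D\bP^\dagger$- and $D\bP^\ddagger$-algebras, respectively. Every element of $X^\dagger \cup X^\ddagger$ is also a cluster variable of $A$.

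\textbf{Glueing.} Set $X \coloneqq X^\dagger \cup X^\ddagger \cup \{x_i, x_j\}$ and let $M \coloneqq D\bP[X] \subseteq A$. Since $D\bP^\dagger = D\bP[x_i^{\pm 1}]$, the set $X^\dagger$ together with $x_i^{\pm 1}$ generates $A[x_i^{-1}]$ as a $D\bP$-algebra, and likewise for $X^\ddagger$ and $x_j$. Consequently, $M[x_i^{-1}] = A[x_i^{-1}]$ and $M[x_j^{-1}] = A[x_j^{-1}]$. The fact that $(i,j)$ is a covering pair means $\Spec(A) = \Spec(A[x_i^{-1}]) \cup \Spec(A[x_j^{-1}])$, so no prime of $A$ contains both $x_i$ and $x_j$; equivalently, $(x_i, x_j)A = A$. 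Applying \cref{l: glueingprop} to the inclusion $M \hookrightarrow A$ with $g_1 = x_i$ and $g_2 = x_j$ then gives $M = A$, so $X$ is the desired generating set.

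\textbf{Termination and main difficulty.} The recursion terminates after finitely many steps because $A$ is Banff. The only subtle point is bookkeeping: when translating the recursively computed generators of the freezing $A[x_i^\dagger]$ from a $D\bP^\dagger$-algebra presentation back into a $D\bP$-algebra presentation, one must remember to reintroduce the frozen variables (that is, the coefficients $x_i^{\pm 1}$ moved into the enlarged semifield) as honest generators. Once this translation is made, the glueing step is immediate and the correctness of the recursion reduces to \cref{t:banff-algorithm} together with \cref{l: glueingprop}.
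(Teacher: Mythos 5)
Your recursion mirrors the paper's proof, and the base case and the identities $M[x_i^{-1}]=A[x_i^{-1}]$, $M[x_j^{-1}]=A[x_j^{-1}]$ are fine, but the glueing step has a genuine gap. \cref{l: glueingprop} requires $(g_1,\dots,g_l)=R$ in the ring $R$ over which the map is a module homomorphism. Since your $M=D\bP[X]$ is only a subring of $A$ and not an $A$-module, the inclusion $\iota\colon M\hookrightarrow A$ can only be viewed as an $M$-module homomorphism, so what you must verify is $(x_i,x_j)M=M$, not $(x_i,x_j)A=A$. The covering-pair property gives you witnesses $1=ax_i+bx_j$ with $a,b\in A$, but nothing guarantees such witnesses lie in $M$: your candidate set $X^\dagger\cup X^\ddagger\cup\{x_i,x_j\}$ need not contain the elements needed to express $1$ in the ideal $(x_i,x_j)$ inside $M$, so the lemma cannot be applied as stated.

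The paper resolves exactly this point by two moves you omit: it passes without loss of generality to a \emph{terminal} covering pair (which always exists when a covering pair does), and it enlarges the candidate generating set to $X_i\cup X_j\cup\{x_1,\dots,x_n,x_i',x_j'\}$. Then the exchange relation $x_ix_i'=c+x_jdm$, with $c,d\in\bP$ and $m$ a monomial in the other initial variables, exhibits the invertible coefficient $c$ as an element of $(x_i,x_j)_{A'}$ \emph{using only generators of $A'$}, which yields $(x_i,x_j)_{A'}=A'$ and makes the glueing lemma applicable over $A'$. To repair your sketch you should add $x_1,\dots,x_n,x_i',x_j'$ to $X$, reduce to a terminal covering pair, and verify the unit-ideal condition inside the subalgebra generated by $X$ via this exchange relation; your final remark about reintroducing the frozen variables as generators is correct but is not the actual subtlety.
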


\begin{proof}
  We use the recursive structure of the Banff algorithm.
  If $A$ is isolated, then an explicit generating set consisting of cluster variables is given in \cref{l:isolated-algebra}.

  Now suppose that $A$ has a covering pair $(i,j)$ in a seed $(\vec{x},\vec{y},B)$.
  Without restriction, we may assume that $(i,j)$ is a terminal covering pair.  
  By recursion, we can assume that for each $k \in \{i,j\}$, we have a generating set $X_k$ of $A[x_k^{-1}]$ as $D\bP_k$-algebra, where $\bP_k = \bP \oplus \Trop(x_k)$ and $X_k$ consists of cluster variables of $A[x_k^\dagger]=A[x_k^{-1}]$.

  It suffices to show that
  \[
  X\coloneqq X_i \cup X_j \cup \{x_1, \dots, x_n, x_i',x_j' \}
  \]
  generates $A$ as $D\bP$-algebra.
  Here $\vec x=(x_1,\dots,x_n)$ and $x_i'$,~$x_j'$ are the cluster variables obtained by mutating $x_i$, respectively $x_j$, in the seed $(\vec{x},\vec{y},B)$.

  First note $X \subseteq A$, since every cluster variable of the freezings $A[x_i^\dagger]$ and $A[x_j^\dagger]$ is also a cluster variable of $A$.
  Let $A'$ be the $D\bP$-algebra generated by $X$.
  Then $A' \subseteq A$, and we have to show equality.

  Since $(i,j)$ is a terminal covering pair, we have $b_{ki} \le 0$ for all $k$ or $b_{jk} \le 0$ for all $k$.
  Consider the first case.
  Now the exchange relation at $i$ reads
  \begin{equation} \label{eq:banff-exchange}
  x_i x_i' = c + x_j d m
  \end{equation}
  with $c$,~$d \in \bP$ and $m$ a monomial in $x_1$, \dots,~$x_{i-1}$,~$x_{i+1}$, \dots,~$x_n$.
  Noting that $c$ is invertible in $A'$ and $d$, $x_i'$,~$m \in A'$, we deduce $(x_i,x_j)_{A'} = A'$.
  In the second case, a symmetric argument again shows $(x_i,x_j)_{A'} = A'$.

  The inclusion $\iota\colon A' \hookrightarrow A$ induces inclusions of the localizations $\iota_k\coloneqq A'[x_k^{-1}] \hookrightarrow A[x_k^{-1}]$ for $k\in\{i,j\}$.
  By construction, the set $\iota_k(X)$ generates $A[x_k^{-1}]$ as $D\bP_k$-algebra.
  Since also $x_k$,~$x_k^{-1} \in \im \iota_k$, the homomorphism $\iota_k$ is surjective.
  Considering $\iota$ as $A'$-module homomorphism, \cref{l: glueingprop} yields $A'=A$.
\end{proof}

\begin{remark}
  The algorithm can be adapted to other locally acyclic cluster algebras, provided that (i) an explicit cover by isolated (or acyclic) cluster localizations is known, say $A[g_1^{-1}]$, \dots,~$A[g_l^{-1}]$, and (ii) one can assure $(g_1,\dots,g_l)_{A'}=A'$.
  In the Banff algorithm, the exchange relation \cref{eq:banff-exchange} of a terminal covering pair provided an easy way to guarantee the second condition by adding $x_1$, \dots,~$x_n$ and $x_i'$ to the generating set.
\end{remark}

Once a set of generators is known, a presentation can be computed using Gröbner bases.
Because of the use of Gröbner bases to compute an elimination ideal, we restrict to fields, but note that this is also possible over computable PIDs such as $\bZ$ \cite[Chapter 10.1]{becker-weisspfenning93}.

\begin{proposition}\label{p:presentation}
  Let $K$ be a computable field.
  Given $f_1$, \dots,~$f_m \in K[\vec{x}^{\pm1}]$, one can compute a presentation for the algebra $K[x_1,\ldots,x_n,f_1,\ldots,f_m]$.
\end{proposition}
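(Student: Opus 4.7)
The plan is to reduce the computation of a presentation to a Gröbner basis elimination. Consider the $K$-algebra homomorphism
\[
\phi\colon K[X_1,\ldots,X_n,Y_1,\ldots,Y_m] \twoheadrightarrow K[x_1,\ldots,x_n,f_1,\ldots,f_m], \quad X_i \mapsto x_i,\ Y_j \mapsto f_j.
\]
A presentation is $K[\vec X, \vec Y]/\ker\phi$, so the task reduces to computing finitely many generators of $\ker\phi$.

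To express everything in terms of polynomials (rather than Laurent polynomials), I would use the standard presentation
\[
K[\vec x^{\pm 1}] \cong K[X_1,\ldots,X_n,T_1,\ldots,T_n]/(X_1 T_1-1,\ldots,X_n T_n - 1),
\]
and rewrite each $f_j$ as a polynomial $F_j(\vec X,\vec T) \in K[\vec X,\vec T]$ by splitting each Laurent monomial $\vec x^{\vec \alpha}$ with $\vec \alpha \in \bZ^n$ into its positive and negative parts, that is, $\prod_i X_i^{\alpha_i^+} T_i^{\alpha_i^-}$. This is an explicit computation on the input data. Next, in the polynomial ring $R = K[\vec X,\vec T,\vec Y]$, I would form the ideal
\[
J \coloneqq (X_i T_i - 1 : i \in [1,n]) + (Y_j - F_j(\vec X,\vec T) : j \in [1,m]).
\]

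The key claim is that $\ker \phi = J \cap K[\vec X,\vec Y]$. For this, observe that the composition $K[\vec X, \vec Y] \hookrightarrow R \twoheadrightarrow R/J$ agrees with $\phi$ after identifying $R/J \cong K[\vec x^{\pm 1}]$ via $X_i \mapsto x_i$, $T_i \mapsto x_i^{-1}$, $Y_j \mapsto f_j$: the relations $X_iT_i - 1$ force $T_i$ to be the inverse of $X_i$, and then the relations $Y_j - F_j(\vec X,\vec T)$ become $Y_j - f_j$ under the identification. Hence $\ker\phi$ equals the kernel of this composition, which by construction is the contraction $J \cap K[\vec X,\vec Y]$.

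Finally, the elimination ideal $J \cap K[\vec X,\vec Y]$ is computed by a standard Gröbner basis calculation over $K$ using any monomial order that eliminates the variables $T_1,\ldots,T_n$ (for instance a block order with $\{T_i\} > \{X_i,Y_j\}$); this is available since $K$ is a computable field. The output is the subset of the resulting Gröbner basis consisting of those polynomials that do not involve any $T_i$, and these generate $\ker\phi$, yielding the desired presentation. The main cost, and the only non-elementary step, is the Gröbner basis computation itself — which can be expensive in practice, motivating the Laurent-phenomenon-based alternative discussed elsewhere in the paper.
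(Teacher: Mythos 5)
Your proposal is correct and follows essentially the same route as the paper: introduce auxiliary variables for the inverses $x_i^{-1}$, express each $f_j$ as a polynomial in the original and inverse variables, form the ideal generated by $X_iT_i-1$ and $Y_j-F_j$, identify the kernel of the presentation map with the contraction of this ideal, and compute it as an elimination ideal via a Gröbner basis with a suitable elimination order. The only cosmetic difference is that the paper writes each $f_j$ as $g_j(\vec x)\vec x^{-\vec e_j}$ with a single monomial denominator, whereas you split each Laurent monomial into positive and negative parts; both yield the same ideal and the same computation.
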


\begin{proof}
    We introduce auxiliary variables $Y_1, \ldots, Y_n$ to represent the inverses $x_i^{-1}$. 
    Consider the polynomial ring
    \[
      R \coloneq K[X_1, \ldots, X_n, Y_1, \ldots, Y_n, Z_1, \ldots, Z_m].
    \]

    Define the $K$-algebra epimorphism $\phi\colon R \to K[x_1^{\pm1},\ldots,x_n^{\pm1},f_1,\ldots,f_m] $ on the generators by
    \[
      \phi(X_i) = x_i, \qquad \phi(Y_i) = x_i^{-1}, \qquad \phi(Z_j) = f_j.
    \]

    Let $f_j = g_j(\vec x) \vec{x}^{-\vec{e}_j}$ with $g_j \in K[\vec x]$ and $\vec{e}_j \in \bZ^n_{\geq 0}$ (we assume the $f_j$ are represented in such a way that $g_j$ and $\vec e_j$ can be determined).
    Then it is easy to see that
    \[
      \ker\phi=\big(\,X_iY_i-1,\, Z_j-g_j(\vec{X})\vec{Y}^{\vec{e}_j} : i\in [1,n],\, j\in [1,m]\,\big)=:I.
    \]

    Let $J$ be the elimination ideal of $I$ with respect to the variables $Y_1$, \dots,~$Y_n$, namely
    \[
      J \coloneq I \cap K[X_1, \ldots, X_n, Z_1, \ldots, Z_m].
    \]
    Then
    \[
      K[\vec X, \vec Z] / J \cong \phi(K[\vec x, \vec z]) = K[x_1,\dots,x_n,f_1,\ldots,f_m].
    \]

    Generators for $J$ can be computed by computing a Gröbner basis of $I$ with respect to a suitable elimination order (see any textbook on Gröbner bases, for instance \cite[Theorem 1.3.2]{CLO15} or \cite[Chapter 6.2]{becker-weisspfenning93}). 
    This yields the desired presentation.
\end{proof}

\begin{corollary}\label{c:banff-presentation}
  Let $A$ be a Banff cluster algebra over a computable field $K$. 
  Then there exists an algorithm to compute a presentation for $A$.
\end{corollary}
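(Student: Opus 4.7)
The plan is to combine Proposition~\ref{p:banff-generators} with Proposition~\ref{p:presentation}. Let $\bP$ denote the coefficient semifield of $A$; for the statement to be meaningful, $\bP$ is assumed to be a computable semifield, and for concreteness I take $\bP = \Trop(y_1,\ldots,y_l)$, so that $K\bP = K[\vec y^{\pm 1}]$. The steps are essentially: (i) produce a finite set of $K$-algebra generators, each given as an explicit Laurent polynomial in the initial cluster together with $\vec y$; (ii) feed this set into the Gröbner basis construction of Proposition~\ref{p:presentation}.

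First I would invoke Proposition~\ref{p:banff-generators} (with $D = K$) to compute a finite set $X = \{f_1,\ldots,f_m\}$ of cluster variables generating $A$ as a $K\bP$-algebra; by inspection of its recursive construction, $X$ contains the initial cluster $\vec x$. Each $f_i$ is produced by an explicit sequence of mutations from the initial seed, so by composing the mutation formulas I obtain an explicit expression $f_i \in K\bP[\vec x^{\pm 1}] = K[\vec x^{\pm 1}, \vec y^{\pm 1}]$. As a $K$-algebra, $A$ is then generated by $\{f_1,\ldots,f_m\} \cup \{y_1^{-1},\ldots,y_l^{-1}\}$: the cluster variables already generate $A$ over $K[\vec y^{\pm 1}]$, and adjoining the inverses $y_j^{-1}$ suffices to recover the full coefficient ring (note that $\vec y \subseteq K\bP \subseteq A$ is already visible in the $f_i$'s through their Laurent expansions).

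Second, I would apply Proposition~\ref{p:presentation} to the ambient Laurent polynomial ring $K[\vec x^{\pm 1}, \vec y^{\pm 1}]$ with the explicit Laurent polynomials $f_1,\ldots,f_m,\, y_1^{-1},\ldots,y_l^{-1}$ as input. The construction in that proof introduces auxiliary variables for the inverses of $\vec x$ and $\vec y$, adjoins the defining relations $X_iY_i - 1$, $Y_jZ_j' - 1$, and $Z_j - g_j(\vec X,\vec Y)$, and extracts the elimination ideal via a Gröbner basis with respect to a suitable block order. The resulting quotient of a polynomial ring over $K$ is isomorphic to the $K$-subalgebra of $K[\vec x^{\pm 1},\vec y^{\pm 1}]$ generated by our chosen elements, which is precisely $A$.

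The main obstacle is not conceptual but computational. The size of the Laurent expressions for the $f_i$ produced by iterated mutations can grow rapidly, and the Gröbner basis elimination in Proposition~\ref{p:presentation} is notoriously expensive when the number of variables $n+l+m$ becomes large. This is exactly the practical drawback contrasted with the FLIR-based approach in the introduction: the presentation-based algorithm exists in principle, but the Gröbner basis bottleneck severely limits its applicability, whereas the algorithms of Section~\ref{s:computable-domains} avoid it entirely.
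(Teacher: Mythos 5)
Your proposal is correct and takes essentially the same route as the paper, which proves the corollary in two lines by combining \cref{p:banff-generators} (a computable set of cluster-variable generators) with \cref{p:presentation} (Gr\"obner-basis elimination to extract the presentation). Your additional care in adjoining the coefficient Laurent variables $y_j^{\pm 1}$ so as to pass from $K\bP$-algebra generation to $K$-algebra generation is a detail the paper's proof glosses over, but the argument is otherwise identical.
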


\begin{proof}
  \cref{p:banff-generators} allows us to compute a set of cluster variables $f_1$, \dots,~$f_m$ that generate $A$ as a $K$-algebra.
  Then \cref{p:presentation} shows that a presentation for $A$ is computable.
\end{proof}

\begin{remark}\label{r:associated-primes-computation}
  Over a computable field, in principle, \cref{t:class-group-over-krull-domains} allows a direct computation of the class group $\Cl(A)$ from a presentation.
  To do so, it suffices to compute the factorizations
  \[
  x_iA=\vprod_{1 \le j\le r} P_j^{a_{ij}} = \bigcap_{1 \le j\le r} (P_j^{a_{ij}})_v
  \]
  of the ideals $x_iA$ as a divisorial product of height-one prime ideals.
  Since the ideals $P_j$ are the associated primes of $A/x_i A$, and the exponents $a_{ij}$ are maximal with respect to $P_j^{a_{ij}} \subseteq x_i A$, these factorizations can be computed using algorithms for primary decompositions \cite[Chapter~8]{becker-weisspfenning93}.
  However, the computation of a presentation and of primary decompositions both use Gröbner bases, which tend to be computationally expensive.
\end{remark}

\begin{remark}
  Let $D$ be a computable domain.
  In practice, it may be preferable to compute a presentation over a smaller ring $R$ over which $D$ is an algebra, and then extend the resulting presentation to $D$ by scalar extension. 

  More concretely, suppose that $A_R$ is a Banff cluster algebra over a computable domain $R$, and $A_D$ is the corresponding cluster algebra over $D$ with the same initial seed, and $D$ is an $R$-algebra.
  Suppose we have a presentation
  \[
  0 \longrightarrow I \longrightarrow R[\vec{x}] \longrightarrow A_R \longrightarrow 0
  \]
  of the $R$-algebra $A_R$.
  If $D$ is flat over $R$, then tensoring with $D$ yields
  \[
  0 \longrightarrow I\otimes_R D \longrightarrow D[\vec{x}] \longrightarrow A_R\otimes_R D \longrightarrow 0.
  \]
  Flatness of $D$ over $R$ also guarantees
  \[
  A_R\otimes_R D \cong A_D,
  \]
  because $A_R$ is finite intersection of Laurent polynomial rings and finite intersections commute with scalar extension under the flatness hypothesis \cite[Proposition~I.2.6.6]{bourbaki-CA72}.

  If $K$ is a field and $D$ a $K$-algebra, then $D$ is flat over $K$.
  In particular, if $D$ is an algebra over a field, we can compute a presentation over the prime field and then extend scalars to $D$.
\end{remark}

\subsection{Some Examples}\label{ssec:examples}
In this subsection, we discuss a family of cluster algebras arising from triangulations of punctured discs with at least two marked points on the boundary, and test our algorithms on some of these algebras.
For details on cluster algebras arising from surfaces, see for instance \cite{FST08}.

Let $D_{m,p}$ be the disc with $m \ge 2$ marked points on the boundary and $p \ge 2$ punctures.
Any ideal triangulation of $D_{m,p}$ has $n=3p+m-3$ arcs \cite[Section 2]{FG07}.

We give an explicit description of an exchange matrix $B(m,p)=(b_{ij})$ associated to a specific triangulation of $D_{m,p}$.
The arcs of the triangulation will be labeled by integers in $[1,n]$.
The matrix entry $b_{ij}$ then counts the number of triangles in the triangulation in which the arcs $i$ and $j$ appear consecutively in clockwise order, minus the number of triangles in which they appear consecutively in counter-clockwise order.

\begin{figure}
  \centering
  \begin{tikzpicture}[inner sep=0.5mm, scale=.5,auto]
    \draw[darkgray, thick] (0,0) circle (2); 
    \fill[gray!20] (0,0) circle (2);
    \node (1) at (0,2) [fill=black, circle, inner sep=1pt] {};
    \node (2) at (0,-2) [fill=black, circle, inner sep=1pt] {};
    \node (3) at (0,0) [fill=black, circle, inner sep=1pt] {};
    \node (4) at (-1,0) [fill=black, circle, inner sep=1pt] {};
    \draw (1) -- (3) node[midway, right] {\tiny 4};
    \draw (2) -- (3) node[midway, right] {\tiny 5};
    \draw (4) -- (3) node[midway, yshift=0.05cm, xshift=0.05cm] {\tiny 3};
    \draw (4) -- (1) node[midway, left] {\tiny 1};
    \draw (4) -- (2) node[midway, left] {\tiny 2};
  \end{tikzpicture}
  \caption{A triangulation of $D_{2,2}$.}
  \label{f:triangulation}
\end{figure}
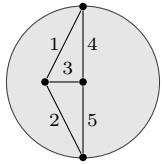

We start with a triangulation of $D_{2,2}$ as in \cref{f:triangulation}.
The corresponding exchange matrix is
\[
      B(2,2) =
      \begin{bmatrix}
        0  &  1 &  -1  & 1 &  0  \\
        -1  &  0  &  1 &  0 &  -1 \\
        1 &  -1  &  0  &  -1 & 1 \\
        -1  &  0  & 1  &  0 &  -1 \\
        0  & 1  &  -1  & 1 &  0
      \end{bmatrix}
\]

For general $(m,p)$ with $m$,~$p \ge 2$ we now recursively first construct a triangulation of $D_{m,2}$ and then one of $D_{m,p}$ with exchange matrix $B_{m,p}=(b_{ij})$.
In this construction, we always maintain that the arcs $n-1$ and $n$ appear in counter-clockwise order in a triangle that also contains a boundary segment.
The two arcs $n-1$ and $n$ will always appear in a unique triangle, so that
\[
      b_{n,n-1} = 1.
\]

\begin{figure}
  \centering
  \begin{subfigure}[t]{0.32\textwidth}
    \centering
    \begin{tikzpicture}[scale=1.3]
    \def\R{1.2}          
    \def\a{-30}          
    \def\b{30}           
  
    \coordinate (O) at (0,0);
    \coordinate (A) at (\a:\R);
    \coordinate (B) at (\b:\R);
    \coordinate (C) at (0:\R);  
  
    \draw[darkgray, thick] (0,0) circle (\R); 
    
    \fill[gray!60] (O) circle (\R);
    
    \fill[gray!30] (O) -- (A) arc (\a:\b:\R) -- cycle;
    
    \draw[thick] (O) -- (A);
    \draw[thick] (O) -- (B);
    
    \node[fill=black, circle, inner sep=1pt] at (O) {};
    \node[fill=black, circle, inner sep=1pt] at (A) {};
    \node[fill=black, circle, inner sep=1pt] at (B) {};
    
    \node[xshift=-0.5cm] at ($(O)!0.75!(A)$) {\tiny{$n$}};
    \node[xshift=-0.6cm] at ($(O)!0.8!(B)$) {\tiny{$n-1$}};
    \end{tikzpicture}
    \caption{Triangle in $D_{m,p}$ with arcs $n-1$ and $n$.}
    \label{fig:base-triangle}
  \end{subfigure}%
  \begin{subfigure}[t]{0.32\textwidth}
  \centering
  \begin{tikzpicture}[scale=1.3]
    \def\R{1.2}          
    \def\a{-30}          
    \def\b{30}           
  
    \coordinate (O) at (0,0);
    \coordinate (A) at (\a:\R);
    \coordinate (B) at (\b:\R);
    \coordinate (C) at (0:\R);  
  
    \draw[darkgray, thick] (0,0) circle (\R); 
    
    \fill[gray!60] (O) circle (\R);
    
    \fill[gray!30] (O) -- (A) arc (\a:\b:\R) -- cycle;
    
    \draw[thick] (O) -- (A);
    \draw[thick] (O) -- (B);
    \draw[thick, recursionblue] (O) -- (C);
    
    \node[fill=black, circle, inner sep=1pt] at (O) {};
    \node[fill=black, circle, inner sep=1pt] at (A) {};
    \node[fill=black, circle, inner sep=1pt] at (B) {};
    \node[fill=recursionblue, circle, inner sep=1pt] at (C) {};
    
    \node[xshift=-0.5cm] at ($(O)!0.75!(A)$) {\tiny{$n+1$}};
    \node[xshift=-0.6cm] at ($(O)!0.8!(B)$) {\tiny{$n-1$}};
    \node[recursionblue, yshift=0.2cm,xshift=0.1cm] at ($(O)!0.55!(C)$) {\tiny{$n$}};
    \end{tikzpicture}
    \caption{Adding a marked point on the boundary.}
    \label{fig:add-boundary-point}
  \end{subfigure}%
  \begin{subfigure}[t]{0.32\textwidth}
  \centering
  \begin{tikzpicture}[scale=1.3]
    \def\R{1.2}          
    \def\a{-40}          
    \def\b{40}           
  
    \coordinate (O) at (0,0);
    \coordinate (A) at (\a:\R);
    \coordinate (B) at (\b:\R);
    \coordinate (D) at (0.8,0);  
  
    \draw[darkgray, thick] (0,0) circle (\R); 
    
    \fill[gray!60] (O) circle (\R);
    
    \fill[gray!30] (O) -- (A) arc (\a:\b:\R) -- cycle;
    
    \draw[thick] (O) -- (A);
    \draw[thick] (O) -- (B);
    \draw[thick, recursionblue] (O) -- (D);
    \draw[thick, recursionblue] (A) -- (D);
    \draw[thick, recursionblue] (B) -- (D);
    
    \node[fill=black, circle, inner sep=1pt] at (O) {};
    \node[fill=black, circle, inner sep=1pt] at (A) {};
    \node[fill=black, circle, inner sep=1pt] at (B) {};
    \node[fill=recursionblue, circle, inner sep=1pt] at (D) {};
    
    \node[xshift=-0.45cm] at ($(O)!0.7!(A)$) {\tiny{$n$}};
    \node[xshift=-0.5cm] at ($(O)!0.7!(B)$) {\tiny{$n-1$}};
    \node[recursionblue, xshift=0.1cm, yshift=0.15cm] at ($(O)!0.55!(D)$) {\tiny{$n+1$}};
    \node[recursionblue, xshift=0.4cm, yshift=0cm] at ($(A)!0.55!(D)$) {\tiny{$n+3$}};
    \node[recursionblue, xshift=0.4cm, yshift=0cm] at ($(B)!0.55!(D)$) {\tiny{$n+2$}};
    \end{tikzpicture}
    \caption{Adding a puncture.}
    \label{fig:add-puncture}
  \end{subfigure}
  \caption{Recursive construction of triangulations of $D_{m+1,p}$ and $D_{m,p+1}$ from $D_{m,p}$.}
\end{figure}

We first describe how to obtain $B(m+1,2)$ from $B(m,2)$.
We start with a situation as in \cref{fig:base-triangle}, where the darker shaded region represents a part of the triangulation that plays no role in the local construction, and is omitted (by the recursive construction, the darker region does not contain a triangle with arcs $n-1$ and $n$). 

To construct $B(m+1,2)$, we add a marked point on the boundary in between the arcs $n-1$ and $n$, and an arc as in \cref{fig:add-boundary-point}. 
To maintain the counter-clockwise order of the two last arcs, we relabel the previous arc $n$ to $n+1$, and label the new arc by $n$.
We obtain a triangulation of $D_{m+1,2}$.
Denoting by $B'$ the $(n-1) \times (n-1)$ upper left submatrix of $B(m,2)$, the new $(n+1) \times (n+1)$ exchange matrix is
{
  \newcommand{\rc}[1]{\textcolor{recursionblue}{#1}}
\[
B(m+1,2) =
\begin{bNiceMatrix}
  \Block[rounded-corners,fill=lightgray]{4-4}{B'} & & & & \rc 0 & b_{1,n} \\
  & & & & \rc \vdots & \vdots \\
  & & & & \rc 0 & b_{n-2,n} \\
  & & & & \rc{-1} & \rc 0  \\
  \rc 0       & \rc \cdots  & \rc 0      & \rc 1 & \rc 0 & \rc{-1}  \\
  b_{n,1}     & \cdots  & b_{n,n-2}  & \rc 0 & \rc 1 & \rc 0   \\
\end{bNiceMatrix}.
\]
}

Now we construct $B(m,p+1)$ from $B(m,p)$.
We add a puncture inside the triangle with arcs $n-1$ and $n$, and we add three arcs as in \cref{fig:add-puncture}, obtaining a triangulation of $D_{m,p+1}$.
Denoting by $B'$ again the $(n-1) \times (n-1)$ upper left submatrix of $B(m,p)$, the new $(n+3) \times (n+3)$ exchange matrix is
{
  \newcommand{\rc}[1]{\textcolor{recursionblue}{#1}}
\[
B(m,p+1) =
\begin{bNiceMatrix}
  \Block[rounded-corners,fill=lightgray]{4-4}{B'} & & & &  b_{1,n} & \rc 0 & \rc 0 & \rc 0\\
  & & & & \vdots & \rc \vdots & \rc \vdots & \rc \vdots \\
  & & & & b_{n-2,n} & \rc 0 & \rc 0 & \rc 0\\
  & & & & \rc 0  & \rc{-1} & \rc{1} & \rc{0} \\
  b_{n,1}     & \cdots  & b_{n,n-2}  & \rc 0 & \rc 0 & \rc{1} & \rc{0} & \rc{-1} \\
  \rc 0       & \rc \cdots  & \rc 0      & \rc{1} & \rc{-1} & \rc{0} & \rc{-1} & \rc{1} \\
  \rc 0       & \rc \cdots  & \rc 0      & \rc{-1} & \rc{0} & \rc{1} & \rc 0 & \rc{-1} \\
  \rc 0       & \rc \cdots  & \rc 0      & \rc 0 & \rc{1} & \rc{-1} & \rc{1} & \rc 0 \\
\end{bNiceMatrix}.
\]
}

We have thus recursively defined a family of exchange matrices $B(m,p)$ for all integers $m \ge 2$ and $p\ge 2$ arising from triangulations of $D_{m,p}$. 
The associated cluster algebras $A_{m,p} \coloneqq A(\vec{x},\vec{y},B(m,p);D)$ are Banff cluster algebras by \cite[Theorem 10.6]{M13}.
If $p=2$, then they are acyclic cluster algebras \cite[Example 4.3]{FST08}, but if $p \ge 3$, then they are not acyclic, as a consequence of the results in \cite[\S 11 and \S 12]{FST08}.

\subsubsection{Computational Results} \label{sssec:computational-results}
We implemented two algorithms in SageMath \cite{sagemath} to compute the class group of a Banff cluster algebra.
The first method, which we will denote by \textsf{L}, is an implementation of \cref{alg:flir-class-group}, while the second method, denoted by \textsf{PD} first computes a presentation (denoted by \textsf{P}) and then primary decompositions (denoted by \textsf{D}) as explained in \cref{r:associated-primes-computation}.

We ran both algorithms on some cluster algebras of type $A(\vec{x},B_{m,p};\bQ)$ on Linux-based workstation, using version 10.7 of SageMath. 
\cref{tab:comparison} shows the running times.
As expected, method \textsf{P} is significantly slower, whereas \textsf{L} remains viable for significantly larger cases. 
For $B(2,2)$, both methods compute the class group without difficulty. However, the cluster algebras $A_{m,2}$ are in fact acyclic, so that, given a better choice of initial seed, the result is immediate from \cite{GELS19}.

For $p \ge 3$, we only managed to compute a presentation for the smallest cases $B(2,3)$ and $B(3,3)$ in less than one hour each.
Even in these cases, it was not feasible to determine the rank of the class group with \textsf{D}, since the computation of the primary decomposition exceeded the available time and memory resources.
On the other hand, method \textsf{L} performed much better, computing the rank of the class group for both $B(2,3)$ and $B(3,3)$ in a few seconds.
For all remaining cases listed in \cref{tab:comparison}, method \textsf{PD} failed already at the level of computing a presentation.
This is presumably related to the growth of the number of generators in the presentation, making the Gröbner basis computations computationally prohibitive.

In contrast, method \textsf{L} remains applicable to substantially larger examples.
Although its running time also increases with the size of the input, it avoids the need for Gr\"obner basis and is therefore more scalable in practice.

\begin{table}
  \centering
  \begin{tabular}{l c c c c c c c}
  \toprule
  & $B(2,2)$ & $B(2,3)$ & $B(3,3)$ & $B(4,3)$ & $B(3,4)$ & $B(4,4)$ & $B(5,4)$ \\
  \midrule
  rank & 11 & 2 & 2 & 2 & 3 & 3 & 3\\
  vars & 5 & 8 & 9 & 10 & 12 & 13 & 14\\
  gens & 5 & 18 & 20 & 24 & 41 & 45 & 49\\
  charts & 16 & 10 & 11 & 11 & 14 & 15 & 16\\
  \textsf{L} & 0.8s & 2.4s & 3.8s & 6.3s & 1m45s & 4m47s & 16m20s\\
  \textsf{P} & 0.1s & 4m55s & 42m30s & $*$ & $*$ & $*$ & $*$\\
  \textsf{PD} & 2.0s & $*$ & $*$ & -- & -- & -- & --\\
  \bottomrule
  \end{tabular}
  \caption{Running times of two methods for computing the divisor class group of cluster algebras, discussed in \cref{sssec:computational-results}. 
  \emph{Rank} is the rank of the class group (which is isomorphic to $\bZ^r$ for some $r$), 
  \emph{vars} is the number of cluster variables in a given seed, 
  \emph{gens} is the number of generators in the presentation used by method~\textsf{P} (that is, the parameter $m$ in \cref{p:presentation}), 
  and \emph{charts} is the number of FLIR charts used by method~\textsf{LP}.
  An asterisk indicates that the computation exceeded the time limit of one hour, while a dash indicates that step \textsf{D} was not attempted as already \textsf{P} timed out.
  }
  \label{tab:comparison}
\end{table}

Finally, we present some examples of non-unique factorizations which were computed with the methods developed in this paper.

\begin{examples}
  \begin{propenumerate}
    \item Consider the cluster algebra $A=A(\vec{x},B;\bQ(i))$ over $\bQ(i)$ associated to the matrix 
    \[
      B=\begin{bmatrix}
        0 & 0 & 0 & 2 & 0 & 0 & 0 & 0\\
        0 & 0 & 0 & -1 & 1 & 0 & 0 & 0 \\
        0 & 0 & 0 & 0 & -1 & 0 & 0 & 0\\
        -2 & 1 & 0 & 0 & 0 & 1 & -1 & 0\\
        0 & -1 & 1 & 0 & 0 & 0 & 1 & -1\\
        0 & 0 & 0 & -1 & 0 & 0 &0 &0\\
        0 & 0 & 0 & 1 & -1 & 0 & 0 &0 \\
        0 & 0 & 0 &0 & 1 &0 &0 &0\\
      \end{bmatrix}.
    \]
   The pair $(3,5)$ is a terminal covering pair for $B$.
   It is easy to check that freezing at $x_5$, we get an acyclic cluster algebra, and freezing at $x_3$, we obtain an acyclic cluster algebra after mutating at $5$.
   Therefore, the algebra $A$ is a Banff cluster algebra.
   The element
   \[f = \frac{x_4^3x_5+x_4^2x_5^2+x_4^3+x_4^2x_5+x_4x_5+x_5^2+x_4+x_5}{x_2}\]
   has four distinct factorizations in $A$:
   \begin{align*}
    f&=(x_1)\cdot (x_3)\cdot \left(\frac{x_4^2+1}{x_1}\right)\cdot \left(\frac{x_4+x_5}{x_2}\right) \cdot \left(\frac{x_5+1}{x_3}\right) \\
     &= (x_1) \cdot \left(\frac{x_4^2+1}{x_1}\right) \cdot \left(\frac{x_4+x_5}{x_2}\right) \cdot (x_8)\cdot  \left(\frac{x_5+1}{x_8}\right) \\
     &= (x_3) \cdot (x_4+i) \cdot (x_4-i) \cdot \left(\frac{x_5+1}{x_3}\right) \cdot \left(\frac{x_4+x_5}{x_2}\right) \\
      &= (x_8) \cdot \left(\frac{x_4+x_5}{x_2}\right) \cdot   \left(\frac{x_5+1}{x_8}\right) \cdot (x_4+i) \cdot (x_4-i) .
  \end{align*} 
  \item  Let $A=A(\vec{x},B(2,2);\bQ)$ be the cluster algebra associated to the triangulation of $D_{2,2}$ in \cref{f:triangulation}.
  The class group of $A$ is $\bZ^{11}$.
  Let $\vec{y}=\mu_{3}\circ \mu_4\circ \mu_5\circ\mu_1\circ \mu_2\circ \mu_3(\vec{x})$ and
  let $\vec{z}=\mu_{4}\circ\mu_3\circ\mu_5\circ\mu_2\circ\mu_1(\vec{y})$.
  Then there are 20 non-associated atoms of $A$ that divide $f$, and $f$ has 29 distinct factorizations in $A$.
  The length set of $f$ is \[\mathsf{L}(f):=\{3,4\},\] where the length of a factorization is the number of atoms in it (counted with repetition).
  As an example, here are two factorizations of $f$ of lengths $4$ and $3$, respectively: 
  {\scriptsize
  \begin{align*}
    f = & \left(\frac{x_2x_4 + x_3}{x_1}\right) \cdot  
    \left(\frac{x_2^2x_4^2 
    + 2x_1x_2x_4x_5
    + x_1^2x_5^2 
    + 2x_2x_3x_4 
    + 2x_1x_3x_5
     + x3^2} {x_1x_2x_3x_4x_5}\right)\cdot  \\ 
    & \cdot \left(\frac{
      x_{2}^{3} x_{4}^{3} + 
      x_{1} x_{2}^{2} x_{4}^{2} x_{5} + 
      3 x_{2}^{2} x_{3} x_{4}^{2} + 
      3 x_{1} x_{2} x_{3} x_{4} x_{5} + 
      x_{1}^{2} x_{3} x_{5}^{2} + 
      3 x_{2} x_{3}^{2} x_{4} + 
      2 x_{1} x_{3}^{2} x_{5} + 
      x_{3}^{3}}{x_1^2x_2 x_3x_4 x_5}   \right)\cdot (x_1) \\
    & =\left( \frac{ x_{2} x_{4}+  x_{3}}{x_1}\right) \cdot
        \left( x_{3} \right) \cdot \Bigl(\bigl(x_1^{-2}x_2^{-2}x_3^{-3}x_4^{-2}x_5^{-2}\bigr)
       \bigl( x_2^5x_4^5
      + 3x_1x_2^4x_4^4x_5
      + 3x_1^2x_2^3x_4^3x_5^2 + \\
      & \qquad + x_1^3x_2^2x_4^2x_5^3 
      + 5x_2^4x_3x_4^4
       + 13x_1x_2^3x_3x_4^3x_5
      + 12x_1^2x_2^2x_3x_4^2x_5^2
      + 5x_1^3x_2x_3x_4x_5^3
      + x_1^4x_3x_5^4 +\\
      & \qquad + 10x_2^3x_3^2x_4^3
       + 21x_1x_2^2x_3^2x_4^2x_5
      + 15x_1^2x_2x_3^2x_4x_5^2
      + 4x_1^3x_3^2x_5^3
      + 10x_2^2x_3^3x_4^2 + \\
      & \qquad + 15x_1x_2x_3^3x_4x_5
      +  6x_1^2x_3^3x_5^2
      + 5x_2x_3^4x_4
      + 4x_1x_3^4x_5 
      +  x_3^5 \bigr) \Bigr).
  \end{align*}}
  \end{propenumerate}
\end{examples}

\section{Distribution of Prime Divisors in FLIRs}
\label{s:distribution}

In this section we prove \cref{t:many-primes}.
The proof is an adaptation of the argument for cluster algebras over fields \cite[Theorem 3.2]{GELS19} and of Fadinger and Windisch's proof of an analogous statement for Krull monoid algebras \cite{FW22}.
In particular, we use lemmas of Chang (\cite[Theorem 3(ii)]{C11} or \cite[Lemma 3.2]{FW22}) and an extension of a lemma of Fadinger and Windisch \cite[Lemma 3.3]{FW22}.

The following is an adaptation of \cite[Lemma 3.3]{FW22}.

\begin{lemma} \label{l:goodgens}
  Let $D$ be a Krull domain with field of fractions $K$ and $\card{\htop(D)} = \infty$.
  Let $P_1$, \dots,~$P_t \in \htop(D)$ be pairwise distinct.
  If $0 \ne I \subseteq D$ is a divisorial ideal of $D$, then there exist infinitely many $P \in \htop(D)$ with the following property: there exist $a$,~$b \in K^\times$ such that $I^{-1}=(a,b)_v$, such that $\val_{P_i}(a) = 0$ for all $i \in [1,t]$, and such that $\val_P(a) = \val_P(b) + 1$.
\end{lemma}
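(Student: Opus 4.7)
The plan is to construct the pair $(a,b)$ directly by two applications of the approximation theorem for Krull domains (\cite[Theorem 5.8]{FOSSUM}). Write $E \coloneqq \dv(I^{-1}) = -\dv(I) = \sum_Q e_Q\, Q$; since $I \subseteq D$, one has $e_Q \le 0$ for all $Q$, and the support of $E$ is finite. Using the formula $\dv((a,b)_v) = \min\{\dv(a),\dv(b)\}$, the condition $(a,b)_v = I^{-1}$ is equivalent to $\min\{\dv(a),\dv(b)\} = E$. The lemma therefore reduces to finding, for infinitely many $P$, elements $a, b \in K^\times$ realizing this equality together with $\val_{P_i}(a) = 0$ for all $i$ and $\val_P(a) = \val_P(b) + 1$.

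Since $\htop(D)$ is infinite, so is $\htop(D)\setminus \big(\operatorname{supp}(E)\cup\{P_1,\dots,P_t\}\big)$; I fix any $P$ in this set. The first step is to apply the approximation theorem to produce $a \in K^\times$ with $\val_{P_i}(a)=0$ for $i\in[1,t]$, with $\val_Q(a) = e_Q$ for $Q \in \operatorname{supp}(E)\setminus\{P_1,\dots,P_t\}$, with $\val_P(a)=1$, and with $\val_Q(a)\ge 0$ for every other $Q$. Since $e_Q \le 0$ throughout, this already gives $\dv(a) \ge E$, so $a \in I^{-1}$, and it installs the prescribed conditions $\val_{P_i}(a) = 0$ and $\val_P(a) = 1$.

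The set $T \coloneqq \{Q \in \htop(D) : \val_Q(a) > e_Q\}$ is finite, being contained in the support of the divisor $\dv(a) - E$; it contains $P$ (since $\val_P(a)=1>0=e_P$), as well as any $P_i \in \operatorname{supp}(E)$ with $e_{P_i}<0$ and any extraneous primes where $a$ happens to have positive valuation. A second application of approximation yields $b \in K^\times$ with $\val_Q(b) = e_Q$ for $Q \in T$ and $\val_Q(b) \ge 0$ elsewhere; in particular $b \in I^{-1}$ and $\val_P(b) = 0$. I then check prime by prime that $\min\{\val_Q(a),\val_Q(b)\} = e_Q$: at $Q \in T$ one has $\val_Q(a) > e_Q = \val_Q(b)$, while at $Q \notin T$ one has $\val_Q(a) = e_Q \le \val_Q(b)$. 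Thus $\min\{\dv(a),\dv(b)\} = E$, which gives $(a,b)_v = I^{-1}$, and $\val_P(a) = \val_P(b) + 1$ holds by construction.

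The conceptual subtlety is simply to fix $P$ \emph{before} constructing $a$, so that the prescription $\val_P(a) = 1$ can be built into the first use of approximation; the second application then has to repair the minimum at the finitely many (a priori unknown) primes in $T$ where $a$ overshoots the required valuation. No appeal to the lemmas of Chang or Fadinger--Windisch is needed at this step; those come into play only in the subsequent deduction of \cref{t:many-primes} from the present lemma.
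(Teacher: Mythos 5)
Your argument is correct and is essentially the paper's own proof: two applications of the Approximation Theorem for Krull domains, with one element matching $\dv(I^{-1})$ exactly on a finite set of primes and the other repairing the minimum at the finitely many primes where the first overshoots, while the side conditions at $P_1,\dots,P_t$ and at $P$ are built into the prescriptions. The only cosmetic difference is that you construct $a$ first and fix $P$ before the first application, whereas the paper constructs $b$ first and selects $P$ during the second application; the bookkeeping is otherwise the same.
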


\begin{proof}
  By the Approximation Theorem for Krull domains \cite[Theorem 5.8]{FOSSUM}, there exists $b \in K^\times$ such that $\val_{P}(b) = -\val_{P}(I)$ for all $P \in \htop(D)$ with $P\in \{P_1,\dots,P_t\}$ or $\val_P(I) \ne 0$, and $\val_Q(b) \ge 0$ for all remaining $Q \in \htop(D)$.
  By the same theorem, there exists $a \in K^\times$ such that $\val_{P_i}(a) = 0$ for all $i \in [1,t]$, such that $\val_{Q}(a) = -\val_{Q}(I)$ whenever $\val_{Q}(b) \ne \val_{Q}(I)$, and such that $\val_P(a) = \val_P(b) + 1$ for some fixed choice $P$ of the remaining primes, and $\val_Q(a) \ge 0$ for all other $Q \in \htop(D)$.
  Comparing valuations shows $I^{-1} = (a,b)_v$.
\end{proof}

\begin{lemma} \label{l:many-irred}
  Let $A$ be a FLIR in $n \ge 1$ variables over a Krull domain $D$ with field of fractions $K$.
  Let $\vec x = (x_1, \dots, x_n)$ be a chart of $A$, and let $P_1$, \dots,~$P_t$ be the height-one prime ideals of $A$ over $x_1\cdots x_n$.
  Then, for every divisorial ideal $0 \ne I \subseteq D$ and all $e_1$, \dots,~$e_t \in \bZ$, there exist $\min\{\card{D},\aleph_0\}$ pairwise non-associated prime elements $g \in K[\vec x^{\pm 1}]$ such that 
  \[
  \big[ gK[\vec x^{\pm 1}] \cap D[\vec x^{\pm 1}] \big] = \big[ ID[\vec x^{\pm 1}] \big] \qquad\text{and}\qquad \val_{P_i}(g) = e_i \qquad\text{ for all } i \in [1,t],
  \]
  where the first equality is in $\Cl(D[\vec x^{\pm 1}])\cong \Cl(D)$.
\end{lemma}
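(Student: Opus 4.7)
The plan is to adapt the strategy of \cite[Theorem 3.2]{GELS19} and \cite[Lemma 3.4]{FW22} to the FLIR setting. If $\card{\htop(D)} < \infty$ (in particular if $D$ is a field) the claim is verified by a direct construction of binomials in $K[\vec x^{\pm 1}]$, so I assume henceforth that $\htop(D)$ is infinite.

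For each $i \in [1,t]$ for which $P^D_i \coloneqq P_i \cap D \ne 0$ (which has height one by \cref{l:PDE}), apply \cref{l:goodgens} to $I$ with exclusion set $\{P^D_i\}$. This yields, for each of infinitely many $P \in \htop(D)$, a pair $a_P, b_P \in K^\times$ with $(a_P, b_P)_v = I^{-1}$, $\val_{P^D_i}(a_P) = 0$ for every $i$, and $\val_P(a_P) = \val_P(b_P) + 1$. For each such $P$, form
\[
g_P = \vec x^{\vec m_P}\bigl(a_P - b_P \vec x^{\vec \alpha_P}\bigr)
\]
with $\vec \alpha_P \in \bZ^n$ primitive (so that $g_P$ is irreducible in $K[\vec x^{\pm 1}]$ via a monomial change of variables) and $\vec m_P \in \bZ^n$ to be chosen. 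Since $c(g_P) = (a_P, b_P)_v = I^{-1}$, the standard identity $\big[gK[\vec x^{\pm 1}] \cap D[\vec x^{\pm 1}]\big] = -[c(g)]$ in $\Cl(D[\vec x^{\pm 1}])\cong \Cl(D)$ delivers the required $\Cl(D)$-class $[I]$. Non-association of distinct $g_P$ is forced by the distinguishing signature $\val_P(a_P/b_P) = 1$, and the cardinality $\min\{\card D, \aleph_0\}$ matches the eligible $P$'s produced by \cref{l:goodgens}.

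\textbf{The main obstacle} is realizing the valuations $\val_{P_i}(g_P) = e_i$. Writing $w_{ji} \coloneqq \val_{P_i}(x_j) \ge 0$, this amounts to solving $\langle \vec m_P, \vec w_{\cdot i}\rangle = e_i - \val_{P_i}(a_P - b_P \vec x^{\vec \alpha_P})$ for $\vec m_P \in \bZ^n$, which is solvable precisely when $\sum_i \bigl(e_i - \val_{P_i}(a_P - b_P \vec x^{\vec \alpha_P})\bigr)[P_i] = 0$ in $H = \bZ^t / \langle \vec c_1, \dots, \vec c_n\rangle$ of \cref{t:class-group-over-krull-domains}. Equivalently, writing $\tilde P_{g_P}$ for the height-one prime of $A$ arising from $g_P K[\vec x^{\pm 1}] \cap D[\vec x^{\pm 1}]$, one needs $[\tilde P_{g_P}] = ([I], -\sum_i e_i [P_i])$ in $\Cl(A) \cong \Cl(D) \oplus H$. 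The delicate step is to orchestrate $\vec \alpha_P$ and, if necessary, refine the pair $(a_P, b_P)$: the partition of $[1,t]$ according to whether $\val_{P_i}(a_P) < \val_{P_i}(b_P) + \langle \vec \alpha_P, \vec w_{\cdot i}\rangle$ or the reverse determines which term attains the minimum in $\val_{P_i}(a_P - b_P \vec x^{\vec \alpha_P})$, and this partition is controlled by $\vec \alpha_P$; on the ramified $P_i$'s the approximation freedom in $D$ further controls $\val_{P_i}(b_P)$ subject to the constraint $(a_P, b_P)_v = I^{-1}$. Combining these two knobs, one can reach every class in $H$, so the system for $\vec m_P$ becomes solvable and the construction carries through.
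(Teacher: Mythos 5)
Your reduction of the class condition to the content $(a_P,b_P)_v=I^{-1}$ is fine, and you correctly identify that prescribing $\val_{P_i}(g)=e_i$ amounts to landing in the right coset of the lattice $L=\langle \vec c_1,\dots,\vec c_n\rangle$ from \cref{t:class-group-over-krull-domains}. But the step you flag as ``delicate'' and then wave through is exactly where the argument breaks: restricting $g$ to the shape $\vec x^{\vec m}(a-b\vec x^{\vec\alpha})$ with $a,b\in K^\times$ is too rigid, and no amount of orchestrating $\vec\alpha$ and $(a,b)$ reaches every coset. The reason is that at the primes $P_i$ with $P_i\cap D=0$ (the typical situation; by \cref{l:PDE} this is all that can happen besides a height-one contraction) one has $\val_{P_i}(a)=\val_{P_i}(b)=0$, so $\val_{P_i}(a-b\vec x^{\vec\alpha})$ is controlled solely by $\langle\vec\alpha,\vec w_{\cdot i}\rangle$ and by whether the residue of $\vec x^{\vec\alpha}$ at $P_i$ happens to lie in $K$ --- and that is a genuine obstruction. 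Concretely, take $D=\bZ$, trivial coefficients, and the acyclic rank-two cluster algebra with exchange matrix $\left(\begin{smallmatrix}0&3\\-3&0\end{smallmatrix}\right)$, so $f_1=1+x_2^3=(1+x_2)(1-x_2+x_2^2)$. By \cref{p:acyclic-primes}, among the primes over $x_1x_2$ are $P_1=(x_1,1+x_2)A$ and $P_2=(x_1,1-x_2+x_2^2)A$, and both contract trivially to $\bZ$; here $\vec c_1=(1,1,0,0)$, $\vec c_2=(0,0,1,1)$, $H\cong\bZ^2$. Try to realize $(e_1,e_2,e_3,e_4)=(0,1,0,0)$. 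For $h=a-bx_1^{\alpha_1}x_2^{\alpha_2}$ with $a,b\in\bQ^\times$: if $\alpha_1\ne0$ then $\val_{P_1}(h)=\val_{P_2}(h)=\min\{0,\alpha_1\}$; if $\alpha_1=0$ then $\val_{P_2}(h)\ge1$ forces the residue $\zeta^{\alpha_2}$ ($\zeta$ a primitive sixth root of unity, the residue of $x_2$ at $P_2$) to be rational, i.e.\ $3\mid\alpha_2$ and $a/b=(-1)^{\alpha_2/3}$, in which case $1+x_2$ divides $h$ as well and both valuations equal $1$. Hence $\val_{P_2}(h)-\val_{P_1}(h)\le 0$ for every binomial, and multiplying by $\vec x^{\vec m}$ shifts both valuations by the same amount, so $(0,1,0,0)$ is unreachable --- independently of the auxiliary prime $P$ and of any refinement of $(a_P,b_P)$ subject to $(a_P,b_P)_v=I^{-1}$. (Your deferred case $\card{\htop(D)}<\infty$, ``direct construction of binomials,'' fails for the same reason, e.g.\ for the same algebra over $\bQ$.)

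The paper avoids this by not constraining the shape of $g$: it first uses the Approximation Theorem in the Krull domain $A$ itself to produce $f\in A$ with $\val_{P_i}(f)=e_i+M\val_{P_i}(x_1\cdots x_n)$ for all $i$, and then takes $g=b(x_1\cdots x_n)^N+a(x_1\cdots x_n)^{N-1}+af(x_1\cdots x_n)^{-M}$ with $(a,b)$ from \cref{l:goodgens} and $N$ large, so that the term $af(x_1\cdots x_n)^{-M}$ dominates at each $P_i$; Eisenstein at the auxiliary prime $P$ gives primality in $K[\vec x^{\pm1}]$, and the content $(a,b)_v=I^{-1}$ gives the class (with separate elementary arguments when $D$ is factorial, in particular a field, where Eisenstein is run at a prime dividing $a$ or at an auxiliary prime $p$ of $K[x_2^{\pm1},\dots,x_n^{\pm1}]$). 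The essential idea your plan is missing is that the low-order term carrying the prescribed $P_i$-valuations must be allowed to be a general element of $A$ obtained by approximation, not a monomial with coefficients in $K$; to repair your argument you would have to replace the binomial ansatz by something of this kind, at which point you have reproduced the paper's proof.
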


\begin{proof} 
  We consider several (overlapping) cases.
  Generally speaking, since we aim to produce many prime ideals, the cases when $n=1$ or when $D$ does not have infinitely many height-one primes are the harder ones.

  \medskip
  \noindent
  \textbf{Case 1: $n=1$ and $D$ is factorial.}
  Either $A=D[x]$ or $A=D[x^{\pm 1}] \cap D[(a/x)^{\pm 1}]$ for some nonzero nonunit $a \in D$ by \cref{p:flir-1-var}.
  In any case, the ring $D[x^{\pm 1}]$ is factorial, so the first property is always vacuously satisfied.

  If $A=D[x]$ or $A=D[x^{\pm 1}]$ (corresponding to $a \in D^\times$), then the second property follows easily from the fact that $D[x]$ contains $\min\{\card{D}, \aleph_0\}$ many prime elements. (Note $t=1$ and $P_1=(x)$ if $A=D[x]$, so that $x^{e_1} g$ for any non-monomial irreducible $g \in K[x]$ satisfies the property, and $t=0$ if $A=D[x^{\pm 1}]$).

  So suppose $A=D[x^{\pm 1}] \cap D[(a/x)^{\pm 1}]$ with $0 \ne a \in D \setminus D^\times$.
  Since every $P \in \htop(A)$ extends non-trivially to one of the two localizations $D[x^{\pm 1}]$ or $D[(a/x)^{\pm 1}]$, every height-one prime ideal over $x$ extends non-trivially to $D[(a/x)^{\pm 1}]$.
  There, we have
  \[
  x = a (x/a) = \pi_1^{r_1} \cdots \pi_t^{r_t} (x/a),
  \]
  with $a=\pi_1^{r_1} \cdots \pi_t^{r_t}$ a prime factorization of $a$ in $D$ and $x/a$ a unit.
  Since each $\pi_i$ is prime in $D[(a/x)^{\pm 1}]$, the ideals $P_i = \pi_i D[(a/x)^{\pm 1}] \cap A$ for $i \in [1,t]$ are precisely the height-one prime ideals of $A$ over $x$.
  Therefore, we have $\val_{P_i}(\pi_i) = 1$ and $\val_{P_j}(\pi_i) =0$ for all $i \ne j \in [1,t]$.
  Hence, we have $\val_{P_i}|_D = \val_{\pi_i}$.

  Now let $b=\pi_1^{e_1} \cdots \pi_t^{e_t} \in K^\times$.
  For sufficiently large $N$, and arbitrary $c \in D$, the polynomial
  \[
  g_c \coloneqq \pi_1^{e_1-1} x^N  + \pi_1^{e_1}c x^{N-1} + b
  \]
  satisfies $\val_{P_i}(g_c) = \val_{P_i}(b) = \val_{\pi_i}(b) = e_i$ for all $i \in [1,t]$, since $\val_{P_i}(x) > 0$ for all $i$.
  Further, it is irreducible over $D_{(\pi_1)}$ by Eisenstein's criterion.
  Hence, it is a prime element of $K[x^{\pm 1}]$ with the desired valuations at $P_i$.
  This settles \textbf{Case 1}.

  \medskip
  For the remaining cases, choose $M \ge 0$ such that $e_i \ge -M \val_{P_i}(x_1\cdots x_n)$ for all $i$ (this is possible because $\val_{P_i}(x_1\cdots x_n) \ge 1$).
  Using the Approximation Theorem for Krull domains, choose $f \in A$ such that $\val_{P_i}(f) = e_i + M \val_{P_i}(x_1\cdots x_n) \ge 0$ for all $i \in [1,t]$.
  Write
  \[
  f = \sum_{i=r}^s f_i x_1^i \in D[\vec x^{\pm 1}],
  \]
  with $f_i \in D[x_2^{\pm 1},\dots,x_n^{\pm 1}]$ and $f_r$,~$f_s \ne 0$.

  \medskip
  \noindent
  \textbf{Case 2: $n \ge 2$ and $D$ is factorial.}
  Then $D[\vec x^{\pm 1}]$ is factorial, so that the first property is always vacuously satisfied.
  Let $p \in K[x_2^{\pm 1},\dots,x_n^{\pm 1}]$ be a prime element that does not divide any of the $f_i$ and such that $\val_{P_i}(p) = 0$ for all $i$. 
  There exist at least $\min\{\card{K},\aleph_0\}$ such prime elements, because $n \ge 2$.
  Define
  \[
  g_p \coloneqq (x_1\cdots x_n)^N + f p (x_1\cdots x_n)^{-M},
  \]
  with $N$ sufficiently large so that $\val_{P_i}(g_p) = \val_{P_i}(f p(x_1\cdots x_n)^{-M}) = e_i$.
  Viewed as a Laurent polynomial in $x_1$ over $K[x_2^{\pm 1}, \dots, x_n^{\pm 1}]$, the element $g_p$ is irreducible by Eisenstein's criterion.
  Since its leading term is not divisible by a non-constant element of $K[x_2^{\pm 1}, \dots, x_n^{\pm 1}]$, the Laurent polynomial $g_p$ is a prime element of $K[\vec x^{\pm 1}]$ with the desired properties.
  Different choices of $p$ lead to non-associated prime elements $g_p$.

  \medskip
  Since a Krull domain with $\card{\htop(D)} < \infty$ is always factorial (in fact, it is a semilocal PID \cite[Proposition 2.10.7.4]{GH06}), it suffices to consider the following remaining case.

  \medskip
  \noindent
  \textbf{Case 3: $n \ge 1$ and $\card{\htop(D)}=\infty$.}  
  \Cref{l:goodgens} shows that there exist $a$,~$b \in K^\times$ and $P \in \htop(D)$ such that $I^{-1}=(a,b)_v$, that $\val_P(a) = \val_{P}(b) + 1$, and that $\val_{P_i}(a)=0$ for all $i \in [1,t]$ (we use that $P_i \cap D$ has height at most one, see \cref{l:PDE}).
  In fact, there are infinitely many choices for $P$, which allows us to also guarantee $\val_P(f_i)=0$ for all nonzero coefficients of the $f_i$.
  
  Define
  \[
  g_p \coloneqq b (x_1\cdots x_n)^N + a (x_1\cdots x_n)^{N-1} + a f (x_1\cdots x_n)^{-M} ,
  \]
  with $N$ chosen sufficiently large so that $\val_{P_i}(g_p) = \val_{P_i}(af(x_1\cdots x_n)^{-M}) = e_i$. 
  Applying Eisenstein's criterion in the localization $D_P$, it follows that $g_p \in D_P[\vec x^{\pm 1}]$ is irreducible.
  Thus, the element $g_p$ is a prime element in $K[\vec x^{\pm 1}]$.

  Finally, by \cite[Lemma 3.2]{FW22}, and noting that all coefficients of $g_p$ are multiples of $a$ or $b$, we have
  \[
  g_p K[\vec x^{\pm 1}] \cap D[\vec x^{\pm 1}] = g_p (a,b)^{-1}[\vec x^{\pm 1}] = g_p I D[\vec x^{\pm 1}]. \qedhere
  \]
\end{proof}

\bigskip
\begin{proof}[Proof of \cref{t:many-primes}]
  Fix a chart $\vec x \in S$ and let $P_1$, \dots,~$P_t$ be the height-one prime ideals of $A$ over $x_1\cdots x_n$.
  By \cref{l:class-group-splits},
  \[
  \Cl(A) \cong \Cl(D) \oplus H,
  \]
  with $H$ generated by $[P_1]$, \dots,~$[P_t]$.
  Let $\pi \colon \Cl(A) \to H$ be the projection map.

  Let $I \subseteq D$ be a nonzero divisorial ideal and let $e_1$, \dots,~$e_t \in \bZ$.
  It suffices to show: 
  there exist $\min\{\card{D},\aleph_0\}$ many pairwise distinct $Q \in \htop(A)$ such that
  \[
  \big[Q D[\vec x^{\pm 1}]\big]=\big[I D[\vec x^{\pm 1}]\big] \qquad\text{and}\qquad \pi([Q]) = e_1 \pi([P_1]) + \cdots + e_t \pi([P_t]).
  \]
  
  \Cref{l:many-irred} shows that there exist $\min\{\card{D},\aleph_0\}$ many pairwise non-associated prime elements $g \in K[\vec x^{\pm 1}]$ such that
  \[
  \big[ gK[\vec x^{\pm 1}] \cap D[\vec x^{\pm 1}] \big] = \big[ I D[\vec x^{\pm 1}]  \big] \qquad\text{and}\qquad \val_{P_i}(g) = -e_i.
  \]
  For each such $g$, the ideal $Q_g \coloneqq g K[\vec x^{\pm 1}] \cap A$ is a height-one prime ideal of $A$ with $\big[Q_g D[\vec x^{\pm 1}]\big] = \big[I D[\vec x^{\pm 1}]\big]$.

  By considering the localization $K[\vec x^{\pm 1}]$ of $A$, we obtain
  \[
  gA = Q_g \cdot_v P_1^{-e_1} \cdots \cdot_v P_t^{-e_t} \cdot_v P_1' \cdots_v P_s',
  \]
  where $P_1'$, \dots,~$P_s' \in \htop(A)$ are such that $x_1\cdots x_n \not \in P_i'$ and $D \cap P_i' \ne 0$ for all $i \in [1,s]$.
  Thus, each $P_i' D[\vec x^{\pm 1}]$ is extended from $D$.
  The splitting of the short exact sequence in \cref{l:class-group-splits} shows $\pi([P_i'])=0$ for $i \in [1,s]$.
  We conclude $\pi([Q_g]) = e_1 \pi([P_1]) + \cdots + e_t \pi([P_t])$.
\end{proof}

\section{Final Observations} \label{s:final-observations}

We make some final observations: we discuss the relation between the Picard group and the class group, revisit the acyclic case to simplify the argument of \cite{GELS19}, and close with some open questions.

\subsection{Picard Groups} \label{ssec:picard-group}
The Picard group $\Pic(A)$ of a domain $A$ is the group of all invertible fractional ideals of $A$ modulo the subgroup of principal fractional ideals.
For any Krull domain $A$, there is a canonical monomorphism $\Pic(A) \to \Cl(A)$.
For a noetherian integrally closed domain, this map is bijective if and only if $A$ is locally factorial \cite[Corollary 18.5]{FOSSUM}.

Singularities of cluster algebras have been studied, and even classified for cluster algebras of finite type \cite{BMRS15,BFMS23,BFMS24}.
For instance, the $A_3$-cluster algebra over a field of characteristic zero has a unique singular maximal ideal.
While even a non-regular local ring may be factorial, this is not the case here: the following shows that $\Pic(A)$ differs from $\Cl(A)$ for the $A_3$-cluster algebra.

\begin{proposition}
  If $K$ is a field with $\chr K=0$ and $A$ is the $A_3$-cluster algebra over $K$, then $\Pic(A) = 0$ and $\Cl(A)\cong \bZ$.
\end{proposition}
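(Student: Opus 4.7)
The plan is to first compute $\Cl(A) \cong \bZ$ using \cref{t:class-group-over-krull-domains}, then to identify the unique singular maximal ideal of $A$ together with its completion, and finally to show that the generator of $\Cl(A)$ does not survive in the Picard group, via the map to the class group of the completion.

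I take the acyclic initial seed whose exchange relations are $x_1 x_1' = x_3 x_3' = x_2 + 1$ and $x_2 x_2' = x_1 x_3 + 1$. Standard results for acyclic cluster algebras then provide the complete-intersection presentation
\[
  A = K[x_1, x_1', x_2, x_2', x_3, x_3']/(x_1 x_1' - x_2 - 1,\ x_2 x_2' - x_1 x_3 - 1,\ x_3 x_3' - x_2 - 1).
\]
By direct quotient computations, the height-one primes of $A$ containing $x_1 x_2 x_3$ are $P_{13} = (x_1, x_3)$, $P_{13'} = (x_1, x_3')$, $P_{1'3} = (x_1', x_3)$ and $P_2 = (x_2)$, with respective quotients $K[x_1', x_3']$, $K[x_1', x_3]$, $K[x_1, x_3']$ and $K[x_1^{\pm 1}]$. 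Reading the identity $x_1 x_1' = x_3 x_3'$ in the relevant DVRs yields $\dv_A(x_1) = P_{13} + P_{13'}$, $\dv_A(x_3) = P_{13} + P_{1'3}$ and $\dv_A(x_2) = P_2$. Plugging these into \cref{t:class-group-over-krull-domains} gives $\Cl(A) \cong \bZ^4/\langle (1,1,0,0),\,(0,0,0,1),\,(1,0,1,0)\rangle \cong \bZ$, with $[P_{13}]$ a generator.

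Next, a linear-dependence analysis of the $3 \times 6$ Jacobian of the three exchange relations, combined with the relations themselves, shows that the rank drops only on the locus $x_1 = x_1' = x_3 = x_3' = 0$: a nontrivial relation among the rows of the Jacobian forces $x_1$ and $x_3$ (and, with them, $x_1'$ and $x_3'$) to vanish, and the exchange relations then force $x_2 = x_2' = -1$. Thus the only singular point is $\mathfrak{m} = (x_1, x_1', x_2 + 1, x_2' + 1, x_3, x_3')$. In the completion $\hat A_\mathfrak{m}$, the first and third relations give $x_2 + 1 = x_1 x_1' = x_3 x_3'$, while the second relation can be solved for $x_2' + 1$ since its coefficient $x_1 x_1' - 1$ is a unit. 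Eliminating $x_2$ and $x_2'$ yields
\[
  \hat A_\mathfrak{m} \cong K[[x_1, x_1', x_3, x_3']]/(x_1 x_1' - x_3 x_3'),
\]
the completion of the three-dimensional ordinary double point (the conifold), which is a Krull domain with $\Cl(\hat A_\mathfrak{m}) \cong \bZ$ generated by the class of $(x_1, x_3)$.

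To conclude, since $\mathfrak{m}$ is the only singular maximal ideal of $A$, every other local ring $A_\mathfrak{n}$ is regular and hence factorial, so $\Pic(A) = \ker\bigl(\Cl(A) \to \Cl(A_\mathfrak{m})\bigr)$. Because $A_\mathfrak{m}$ is a localization of a finitely generated $K$-algebra, it is excellent and therefore analytically normal, so $\hat A_\mathfrak{m}$ is a Krull domain and extension followed by divisorial closure induces a well-defined homomorphism $\Cl(A_\mathfrak{m}) \to \Cl(\hat A_\mathfrak{m})$. Under the composition $\Cl(A) \to \Cl(\hat A_\mathfrak{m})$, the class $[P_{13}]$ is sent to a generator of $\Cl(\hat A_\mathfrak{m}) \cong \bZ$, so it has infinite order already in $\Cl(A_\mathfrak{m})$; since $[P_{13}]$ generates $\Cl(A)$, the map $\Cl(A) \to \Cl(A_\mathfrak{m})$ is injective and $\Pic(A) = 0$. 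The main technical point is the passage to the completion: the explicit description of $\hat A_\mathfrak{m}$ as a conifold and the transfer of the classical class-group computation to its completion.
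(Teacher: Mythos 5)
Your proposal is correct, but it takes a genuinely different route from the paper in both halves. For $\Cl(A)\cong\bZ$, the paper simply cites \cite[Corollary 5.16]{GELS19}, whereas you recompute it from scratch via \cref{t:class-group-over-krull-domains}, using the exchange-relation presentation of the acyclic seed and identifying the four prime divisors over $x_1x_2x_3$ directly; this is more self-contained (and consistent with \cref{p:acyclic-primes}), at the cost of relying on the standard acyclic presentation from \cite{BFZ05}. One small slip: $A/(x_2)\cong K[x_1^{\pm1},x_2']$, not $K[x_1^{\pm1}]$, since $x_2'$ is unconstrained modulo $x_2$; this is harmless, as the quotient is still a two-dimensional domain, so $(x_2)$ is a prime divisor and your divisor computations and the resulting $\bZ^4/\langle(1,1,0,0),(0,0,0,1),(1,0,1,0)\rangle\cong\bZ$ stand. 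For $\Pic(A)=0$, the paper cites \cite{BFMS23} for the unique singularity and the identification $A_M\cong B_{M'}$ with $B=K[X,Y,Z,W]/(XY-ZW)$, and then proves $\Cl(B_{M'})\cong\bZ$ inside the local ring by Nagata's Theorem plus a symbolic-power argument; you instead locate the singular point by a Jacobian computation on your own presentation (valid in characteristic $0$, where smooth equals regular), eliminate $x_2,x_2'$ to identify the \emph{completion} $\hat A_{\mathfrak m}\cong K[[x_1,x_1',x_3,x_3']]/(x_1x_1'-x_3x_3')$, and use injectivity into the class group of the completion (via excellence/analytic normality) together with the classical fact $\Cl(\hat A_{\mathfrak m})\cong\bZ$ generated by $(x_1,x_3)$. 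Your route buys independence from \cite{BFMS23} and only needs injectivity of $\Cl(A)\to\Cl(A_{\mathfrak m})$ rather than the full exact sequence; its cost is that the class group of the completed conifold is asserted without proof or reference, so for completeness you should either cite it (it is classical, e.g.\ in \cite{FOSSUM}) or note that the paper's symbolic-power argument for $P=(x,z)$ transfers verbatim to the complete local ring. Also note you only need the image of $[P_{13}]$ to be nonzero in the torsion-free group $\Cl(\hat A_{\mathfrak m})$, which your identification $P_{13}\hat A_{\mathfrak m}=(x_1,x_3)$ indeed gives.
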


\begin{proof}
  By \cite[Corollary 5.16]{GELS19}, we have $\Cl(A)\cong \bZ$.

  By \cite[Lemma 4.1]{BFMS23}, there is a presentation
  \[
  A \cong D[Z_1,Z_2,Z_3,Z_4]/(Z_1 Z_2 Z_3 Z_4 - Z_1 Z_2 - Z_1 Z_4 - Z_3 Z_4).
  \]
  Since $\chr K \ne 2$, there is a unique singularity at the maximal ideal $M$ corresponding to the origin \cite[Theorem A(1)]{BFMS23}.

  By \cite[Corollary 18.6]{FOSSUM}, there is a short exact sequence
  \[
  0 \to \Pic(A) \to \Cl(A) \to \Cl(A_M) \to 0.
  \]
  The claim follows if we show that $\Cl(A) \to \Cl(A_M)$ is an isomorphism.

  The localization $A_M$ is isomorphic to the localization of the hypersurface 
  \[
  B\coloneqq K[X,Y,Z,W] / (XY-ZW)
  \]
  at its singular maximal ideal ${M'}=(x,y,z,w)$ \cite[Proposition 3.8]{BFMS23}.

  An adaptation of a standard argument (see \cite[Exercise II.6.5]{hartshorne77} or \cite[Proposition 14.8]{FOSSUM}) to the local case shows $\Cl(B_{M'}) \cong \bZ$: 
  localizing $B_{M'}$ further, at the height-one prime ideal $P\coloneqq (x, z)B_{M'}$, yields $B_{(x, z)} \cong K[X,Y,W]_{(X)}$, which is factorial.
  By Nagata's Theorem (\cref{t:nagata}), the class group $\Cl(B_{M'})$ is generated by the class of $P$.
  It suffices to show that no symbolic power $P^{(n)}$ with $n\ge 1$ is principal.
  Suppose $P^{(n)} = f B_{M'}$ for some $f\in B_{M'}$.
  Note $x^n$, $z^n \in P^{(n)}$.
  Since
  \[
    B_{M'} / (y, w) B_{M'} \cong K[X,Z]_{(X,Z)},
  \]
  we see that $(x^n, z^n)$ is not contained in any height-one prime ideal of $B_{M'} / (y, w) B_{M'}$, contradicting that it is contained in $fB_{M'} / (y,w) B_{M'}$.
\end{proof}

\subsection{The Acyclic Case Revisited} \label{ssec:acyclic-primes}

For a cluster algebra $A$ with acyclic seed $(\vec x, \vec y, B)$, the height-one prime ideals over $x_1\cdots x_n$ were described in \cite[Lemma 4.5, Proposition 4.7, and Theorem 4.9]{GELS19}.
Since the remaining height-one prime ideals are in bijection with those of $A[\vec x^{-1}]=D\bP[\vec x^{\pm 1}]
$, this gives a complete description of $\htop(A)$ in the acyclic case.

The argument in \cite{GELS19} is based on a rather technical induction argument and knowledge of a presentation of an acyclic cluster algebra.
Additionally, the determined generators are only proven to generate the ideals as divisorial ideals, with the question of generation as ordinary ideals left open \cite[Question 4.8]{GELS19}.
We clean up this situation in the following, by giving a more natural proof based on iterated FLIRs.
While the arguments are similar, they make more effective use of cluster localizations to avoid the technical induction.
We also allow the ground ring to be an arbitrary Krull domain.

In this subsection, let $A$ be a cluster algebra with an acyclic seed $(\vec x, \vec y, B)$ over a Krull domain $D$.
Let $f_1$, \dots,~$f_n$ be the exchange polynomials associated to the seed.
Let $x_i'=f_i/x_i$.
We generalize the notion of partners \cite[Definition 2.6]{GELS19} to the present setting.

\begin{definition}
  \begin{enumerate}
  \item
  Let $P \in \htop(D\bP[\vec x])$.
  The set of \defit{$P$-partners}, denoted by $\Partner(P) \subseteq [1,n]$, consists of all $i \in [1,n]$ for which $f_i \in P$.

  \item Two indices $i$,~$j \in [1,n]$ are \defit{partners} if they are $P$-partners for some $P \in \htop(D\bP[\vec x])$.
  \end{enumerate}
\end{definition}

From the definition of exchange polynomials, one sees that an exchange polynomial of a non-isolated index $i$ is never contained in any height-one prime ideal extended from $D\bP$ (\cref{sssec:seed-and-mutations}).
Hence, the ideal $P$ in the previous definition corresponds to an irreducible polynomial $p \in K[\vec x]$ over the field of fractions $K$ of $D\bP$.
It follows that two indices $i$,~$j$ are $P$-partners if and only if $p$ divides both $f_i$ and $f_j$ in $K[\vec x]$.

\begin{lemma} \label{l:partners-not-neighbors}
  If $i$,~$j \in [1,n]$ are partners, then $b_{ij}=b_{ji}=0$.
\end{lemma}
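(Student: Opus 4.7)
My plan is to argue by contradiction, reducing the claim to the elementary fact that two monomials in disjoint sets of variables cannot share a non-trivial polynomial factor.

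First I would reinterpret the partnering condition. Since $D\bP$ is a Krull domain (as $\bP$ is torsion-free abelian, so $D\bP$ is Krull by the classical result cited in the proof of \cref{t:krull-cluster-algebras}), the set $\htop(D\bP[\vec x])$ splits into primes extended from $\htop(D\bP)$ and primes of the form $pK[\vec x]\cap D\bP[\vec x]$ for irreducible $p\in K[\vec x]$, where $K=\mathrm{Frac}(D\bP)$. Because the coefficients $y_k/(y_k\oplus 1)$ and $1/(y_k\oplus 1)$ of the exchange polynomial $f_k$ lie in $\bP\subseteq (D\bP)^\times$, the content of $f_k$ is a unit; hence $f_k$ lies in no prime extended from $D\bP$. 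Consequently, $i$ and $j$ being partners amounts to the existence of an irreducible $p\in K[\vec x]$ with $p\mid f_i$ and $p\mid f_j$ in $K[\vec x]$.

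Next I would use acyclicity. Since the quiver of $B$ is a finite directed acyclic graph, I can topologically sort the indices so that $b_{kl}>0$ implies $k<l$. Suppose for contradiction $b_{ij}\ne 0$; after swapping $i$ and $j$ if necessary, assume $i<j$ and $b_{ij}>0$. Then the two monomials $c_1M_1$ and $c_2M_2$ constituting $f_i$ have variable supports $\{k:b_{ki}>0\}\subseteq[1,i-1]$ and $\{k:b_{ki}<0\}\subseteq[i+1,n]$, which are disjoint; moreover, $x_j$ appears in $M_2$ with positive exponent $e\coloneqq -b_{ji}$. Crucially, $f_j$ does not involve $x_j$ at all.

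Writing $f_i=A+Bx_j^e$ with $A=c_1M_1$ and $B=c_2M_2/x_j^e$ in $K[\vec x\setminus x_j]$, comparing $x_j$-degrees in any factorization $pq=f_j$ forces $\deg_{x_j}(p)=0$, so $p\in K[\vec x\setminus x_j]$. Substituting this into $p\mid f_i$ and reading off coefficients of $x_j^0$ and $x_j^e$ in $K[\vec x\setminus x_j][x_j]$ yields $p\mid A$ and $p\mid B$. But $A$ and $B$ are scalar multiples of monomials whose variable supports are disjoint, so $\gcd(A,B)\in K^\times$, forcing $p\in K^\times$ and contradicting irreducibility. Therefore $b_{ij}=0$, and $b_{ji}=0$ follows from skew-symmetrizability.

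I do not foresee a serious obstacle: the only delicate ingredient is identifying $p$ as an honest irreducible polynomial in $K[\vec x]$, which is where the units-in-$\bP$ hypothesis and the structure of $\htop(D\bP[\vec x])$ enter. Everything afterward is a short comparison of $x_j$-degrees combined with the disjointness of the variable supports in the two terms of $f_i$.
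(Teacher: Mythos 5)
Your proof is correct, and it is essentially the argument the paper has in mind: the paper disposes of this lemma by citing \cite[Lemma 2.7]{GELS19} and remarking that the analysis of irreducible factors of exchange polynomials goes through in general, and your write-up is exactly that analysis, carried out in full for a Krull ground ring and arbitrary coefficients. One small inaccuracy: your blanket claim that the content of $f_k$ is a unit (and hence that being partners is equivalent to sharing an irreducible factor in $K[\vec x]$) fails when the $k$-th column of $B$ is zero, for then the two exchange monomials both collapse to $1$ and $f_k=\tfrac{y_k+1}{y_k\oplus 1}\in D\bP$ need not be a unit; this is precisely why the paper restricts that reformulation to non-isolated indices and settles the isolated case separately, where $b_{ij}=0$ holds by definition. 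In your argument this is harmless, since the contradiction hypothesis $b_{ij}\ne 0$ forces both the $i$-th and $j$-th columns of $B$ to be nonzero, but you should either state the reformulation only for non-isolated indices or record the non-isolation before invoking it. Finally, the topological sort is superfluous: the supports of the two monomials of $f_i$ are disjoint simply because one consists of indices $k$ with $b_{ki}>0$ and the other of indices with $b_{ki}<0$, so your argument in fact never uses acyclicity of the seed.
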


\begin{proof}
  In the case of characteristic $0$ and geometric coefficients this is \cite[Lemma 2.7]{GELS19}.
  If $i$ is isolated, then $b_{ij}=0$ for all $j$ by definition.
  In the non-isolated case, the argument is based on analyzing the irreducible factors of exchange polynomials, and it goes through in the general case.
\end{proof}

The following generalizes \cite[Lemma 4.5 and Proposition 4.7]{GELS19} and answers \cite[Question 4.8]{GELS19}.

\begin{proposition} \label{p:acyclic-primes}
  Let $(\vec x, \vec y, B)$ be an acyclic seed of a cluster algebra $A$ over a Krull domain $D$.
  For each $i \in [1,n]$, the height-one prime ideals of $A$ containing $x_i$ are the ideals of the form
  \[
    Q = P A + \sum_{j \in T} x_j A + \sum_{j \in \Partner(P) \setminus T} x_j' A,
  \]
  with $P \in \htop(D\bP[\vec x])$ containing the exchange polynomial $f_i$ and $\{i \} \subseteq T \subseteq \Partner(P)$.
\end{proposition}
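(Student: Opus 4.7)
The plan is to reduce to an isolated cluster algebra by freezing, classify height-one primes there via a transparent polynomial-ring computation, and transfer the result back. By the partners-not-neighbors lemma, the submatrix of $B$ indexed by $\Partner(P) \times \Partner(P)$ vanishes, so freezing $A$ at the initial variables $\{x_k : k \notin \Partner(P)\}$ yields an isolated cluster algebra $A^\dagger$ on $\Partner(P)$ with coefficient semifield $\bP^\dagger = \bP \oplus \Trop(\{x_k : k \notin \Partner(P)\})$. Since freezings of acyclic seeds remain acyclic and locally acyclic cluster algebras equal their upper cluster algebras, Muller's theorem gives $A^\dagger = A[x_k^{-1} : k \notin \Partner(P)]$ as a cluster localization. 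Each $f_j$ with $j \in \Partner(P)$ involves only variables $x_k$ with $b_{kj} \ne 0$, hence $k \notin \Partner(P)$, so $f_j \in D\bP^\dagger$ and
\[
A^\dagger \cong D\bP^\dagger[x_j, x_j' : j \in \Partner(P)] \bigm/ (x_j x_j' - f_j : j \in \Partner(P)).
\]

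For the ($\Rightarrow$) direction I would first work in $A^\dagger$. Because $f_i$ is a sum of two coprime monomials with unit coefficients, no $x_k$ divides $f_i$ in $K[\vec x]$, so the height-one prime $P$ contains no $x_k$ and extends to a height-one prime $\tilde P = PD\bP^\dagger$. The irreducible $p$ defining $P$ divides every $f_j$ with $j \in \Partner(P)$, so modulo $\tilde P$ each relation $x_j x_j' = f_j$ becomes $x_j x_j' = 0$; choosing the factor $x_j$ for $j \in T$ and $x_j'$ for $j \in \Partner(P) \setminus T$ then gives
\[
A^\dagger\bigm/\Bigl(PA^\dagger + \sum_{j \in T} x_j A^\dagger + \sum_{j \in \Partner(P) \setminus T} x_j' A^\dagger\Bigr) \cong (D\bP^\dagger/\tilde P)[x_j' : j \in T,\, x_j : j \in \Partner(P) \setminus T],
\]
a polynomial ring over a domain. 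Hence the ideal on the left is a height-one prime of $A^\dagger$ containing $x_i$, and its contraction to $A$ is the ideal $Q$ of the proposition.

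For the ($\Leftarrow$) direction, given $Q \in \htop(A)$ with $x_i \in Q$, I would pick an irreducible factor $p$ of $f_i$ in $K[\vec x]$ with $\val_Q(p) > 0$ (which exists since $\val_Q(f_i) \ge \val_Q(x_i) > 0$) and set $P = pK[\vec x] \cap D\bP[\vec x]$ and $T = \{j \in \Partner(P) : x_j \in Q\}$. Then $i \in T$, and the proposed ideal sits inside $Q$: the first summand because $p \in Q$, the $j \in T$ summands by construction, and the $j \in \Partner(P) \setminus T$ summands because $f_j = x_j x_j' \in P \subseteq Q$ with $x_j \notin Q$ forces $x_j' \in Q$ by the primality of $Q$. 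Since the ($\Rightarrow$) direction shows the proposed ideal is itself a height-one prime, and it is contained in the height-one prime $Q$, the two coincide.

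The main obstacle is the contraction step in ($\Rightarrow$): showing that the proposition's ideal $Q$ in $A$ is already prime, equivalently that $Q = Q^\dagger \cap A$ rather than a strict subset. I plan to handle this by a direct computation of $A/Q$, mirroring the polynomial-ring quotient computation done in $A^\dagger$ but carried out entirely inside $A$ using the acyclic presentation of $A$ by its initial and one-step-mutated cluster variables together with the relations $x_j x_j' = f_j$.
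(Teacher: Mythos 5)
Your overall strategy (freeze at the non-partners of $i$, classify height-one primes in the resulting isolated cluster algebra, transfer back) is the same as the paper's, and your direct quotient computation in $A^\dagger$ is a reasonable substitute for the paper's iterated one-variable argument (\cref{l:isolated-case-primes}), modulo justifying the presentation of the isolated algebra and adding a one-line localization argument for the height (a domain quotient only gives primality; height one follows by localizing at the chart in which $x_j$ is replaced by $x_j'$ for $j\in T$, where the ideal becomes the extension of $\tilde P$). However, the step you yourself flag as the main obstacle is precisely the crux, and your plan for it does not work as stated. Computing $A/Q_0$ "using the acyclic presentation of $A$ by the initial and once-mutated variables and the relations $x_jx_j'=f_j$" (i) imports the Berenstein--Fomin--Zelevinsky presentation of acyclic cluster algebras, a nontrivial input whose validity over an arbitrary Krull ground ring $D$ and semifield $\bP$ would itself have to be established, and which is exactly the presentation-based technical machinery this proof is designed to avoid; and (ii) even granting it, the computation does not mirror the isolated case: modulo $Q_0$ the relations $x_kx_k'-f_k$ for $k\notin\Partner(P)$ do \emph{not} trivialize (those $f_k\notin P$), so $A/Q_0$ is not a polynomial ring over a domain, and proving it is a domain this way amounts to redoing the technical induction of \cite{GELS19}. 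The paper's missing idea is different: it shows $Q_0\subseteq Q$ and then proves $Q_0=Q$ \emph{locally}, checking $Q_0A^\ddagger=QA^\ddagger$ on every isolated cluster localization $A^\ddagger$ of the cluster cover obtained by freezing subsets of the initial cluster (\cite[Proposition 4]{M14}), and glueing with \cite[Lemma 00EO]{stacks-project}; since in each $A^\ddagger$ the isolated classification applies, no presentation of $A$ itself is ever needed.

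There are two further genuine gaps in your converse direction. First, you construct $P$ only from an irreducible factor $p$ of $f_i$ in $K[\vec x]$; this fails when $i$ is an isolated index, since then $f_i\in D\bP$ is a unit of $K$ and has no irreducible factors, and the relevant $P\in\htop(D\bP[\vec x])$ is extended from a height-one prime of $D\bP$ (compare $\bZ[x,6/x]$ and \cref{l:1-flir-primes}; the statement of the proposition allows both types). The paper's detour, defining $P'\coloneqq QA^\dagger\cap D\bP^\dagger$ and only then restricting and extending to $D\bP[\vec x]$, is what handles both cases uniformly --- and the example following the paper's proof shows this association is delicate (the naive $Q\cap D\bP[\vec x]$ can have height $>1$). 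Second, your inclusion $PA\subseteq Q$ is justified only by $p\in Q$; but $P=pK[\vec x]\cap D\bP[\vec x]$ can be strictly larger than the ideal generated by $p$ when $D\bP$ is not factorial, so an element $g=ph$ with $h\in K[\vec x]$ need not visibly lie in $Q$ (a priori $\val_Q$ can be negative on $K$), and an extra argument is required; in the paper this inclusion is automatic because $P$ is by construction extended from $Q\cap D\bP[\vec x^\dagger]$. Finally, note that your appeal to the proven forward direction at the end of the converse presupposes that $Q_0$ is prime in $A$, i.e.\ the very contraction step that remains open, so as written the two halves of your argument are circular until that step is supplied.
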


In particular, there are
\[
  \sum_{\substack{P \in \htop(D\bP[\vec x]) \\ f_i \in P}} 2^{\card{\Partner(P)} - 1}
\]
height-one prime ideals of $A$ containing $x_i$.

We first deal with FLIRs in one variable, and then with isolated cluster algebras, by treating them as iterated FLIRs.
A height-one prime ideal of a one-variable FLIR $A=D[x,a/x]$ over a Krull domain $D$ either extends to $K[x^{\pm 1}]$, with $K$ the field of fractions of $D$, or intersects $D$ non-trivially in a height-one prime ideal by \cref{l:PDE}.
The height-one primes of the second type are explicitly described in the following.

\begin{lemma} \label{l:1-flir-primes}
  Let $D$ be a Krull domain, let $0 \ne a \in D$, and let $A=D[x,a/x]$.
  Let $P \in \htop(D)$.
  \begin{enumerate}
  \item \label{1-flir-primes:split} If $a \in P$, then
  \[
    P A + x A \quad \text{and} \quad P A + (a/x) A
  \]
  are the height-one prime ideals of $A$ over $P A$.
  \item \label{1-flir-primes:inert} If $a \not \in P$, then $P A$ is a height-one prime ideal of $A$.
  \end{enumerate}
\end{lemma}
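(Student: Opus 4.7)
The plan is to exploit \cref{p:flir-1-var}, which gives $A = D[x,a/x] = D[x^{\pm1}] \cap D[(a/x)^{\pm1}]$ as a $D$-FLIR with chart system $\{x, a/x\}$. By \cref{l:lir-krull-subsystem}, every $Q \in \htop(A)$ extends nontrivially to at least one of the two Laurent polynomial charts, and by \cref{l:PDE} the contraction $Q \cap D$ has height at most one. Consequently, if $Q \cap D = P \in \htop(D)$, then each nontrivial extension of $Q$ must coincide with the unique height-one prime of the corresponding Laurent chart extended from $P$, namely $PD[x^{\pm1}]$ or $PD[(a/x)^{\pm1}]$. Both parts of the lemma will follow from contracting these extensions back to $A$.

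For part \ref{1-flir-primes:inert}, where $a \notin P$, I would show directly that $PA = PD[x^{\pm1}] \cap A$, which exhibits $PA$ as the contraction of a height-one prime and hence as a height-one prime of $A$ itself. The inclusion $\subseteq$ is clear. For the reverse, the explicit description in the final paragraph of the proof of \cref{p:flir-1-var} lets me write any $f \in A$ uniquely as $\sum_{i\ge 0} c_i x^i + \sum_{i\ge 1} d_{-i}(a/x)^i$ with $c_i, d_{-i} \in D$, so that its Laurent expansion in $D[x^{\pm 1}]$ has coefficients $c_i$ at $x^i$ for $i \ge 0$ and $a^i d_{-i}$ at $x^{-i}$ for $i \ge 1$. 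The condition $f \in PD[x^{\pm1}]$ forces all these coefficients into $P$, and since $a \notin P$, primality of $P$ yields $d_{-i} \in P$ for all $i \ge 1$, so $f \in PA$.

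For part \ref{1-flir-primes:split}, where $a \in P$, the starting observation is that $x \cdot (a/x) = a \in P$, so every prime of $A$ over $P$ contains $x$ or $a/x$ (hence $PA$ is not prime), and no height-one prime can contain both, since this would force a trivial extension to both charts, contradicting \cref{l:lir-krull-subsystem}. I would then verify that $Q_1 = PA + xA$ and $Q_2 = PA + (a/x)A$ are both height-one primes. Primality of $Q_1$ comes from $A/Q_1 \cong (D/P)[y]$, where $y$ is the image of $a/x$ and the relation $xy = a$ collapses because $a \in P$; the argument for $Q_2$ is symmetric. For the height, using $a \in P$ one has $x = a(x/a) \in PD[(a/x)^{\pm1}]$ and $a/x = a \cdot x^{-1} \in PD[x^{\pm1}]$, yielding
\[
Q_1 \subseteq PD[(a/x)^{\pm1}] \cap A \quad\text{and}\quad Q_2 \subseteq PD[x^{\pm1}] \cap A.
\]
Since each right-hand side is a height-one prime of $A$ (a contraction from a Laurent chart) containing the nonzero prime on the left, each inclusion must in fact be equality. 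The converse inclusion of sets --- that every height-one $Q$ over $P$ is one of $Q_1, Q_2$ --- then follows by running the same identification for the unique nontrivial extension of $Q$ to the Laurent chart not killing it.

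The main obstacle I anticipate is this last equality argument for $Q_1$ and $Q_2$: they are presented by generators rather than as contractions, so their identification with $PD[(a/x)^{\pm1}] \cap A$ and $PD[x^{\pm1}] \cap A$ requires the prime-chain observation above (two nonzero primes, one inside the other with the outer of height one, must coincide). All other steps are routine after invoking the FLIR chart structure.
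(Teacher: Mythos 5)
Your proposal is correct and reaches the same endpoints as the paper, but it swaps out two steps for different arguments. For part \ref{1-flir-primes:inert}, the paper shows that $A/PA \cong (D/P)[x,y]/(xy-\overline{a})$ is a domain via the presentation $A \cong D[x,y]/(xy-a)$ and coprimality of $x$ and $\overline{a}$, whereas you identify $PA$ outright as the contraction $PD[x^{\pm1}]\cap A$ by the coefficientwise computation coming from the last paragraph of the proof of \cref{p:flir-1-var}; this is a more elementary route that avoids the presentation entirely for this part, and your uniqueness claim for the decomposition $\sum_{i\ge 0}c_i x^i+\sum_{i\ge 1}d_{-i}(a/x)^i$ is indeed justified by the uniqueness of Laurent expansions together with $a\ne 0$. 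For part \ref{1-flir-primes:split}, the paper computes that $PA+xA$ localizes exactly to $PD[(a/x)^{\pm1}]$ to get height one, while you prove only the containment $Q_1\subseteq PD[(a/x)^{\pm1}]\cap A$ and conclude equality from the chain argument (a nonzero prime contained in a height-one prime); both work, and your completeness argument --- every height-one $Q$ over $P$ survives in one chart and there contracts from $PD[x^{\pm1}]$ or $PD[(a/x)^{\pm1}]$ --- is the same as the paper's.

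The one place where you lean on an unstated fact is the isomorphism $A/Q_1\cong (D/P)[y]$: saying that ``the relation $xy=a$ collapses because $a\in P$'' presupposes that $XY-a$ generates the \emph{entire} kernel of $D[X,Y]\twoheadrightarrow A$, i.e.\ the presentation $A\cong D[X,Y]/(XY-a)$, which is exactly what the paper invokes \cite[Lemma 14.1]{FOSSUM} for. This is not a serious defect --- the presentation also follows from the same Laurent-expansion description you already exploit in part \ref{1-flir-primes:inert}: reduce any element of the kernel modulo $XY-a$ to the form $h_0(X)+h_1(Y)$ with $h_1(0)=0$ and compare coefficients in $D[x^{\pm1}]$ to see that $h_0=h_1=0$ --- but it is the only genuine input to the primality of $Q_1$ and $Q_2$, so you should state and justify it rather than treat it as automatic.
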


\begin{proof}
  We have $A = D[x^{\pm 1}] \cap D[(a/x)^{\pm 1}]$ by \cref{p:flir-1-var} and every height-one prime ideal of $A$ extends non-trivially to at least one of the two localizations.

  \ref{1-flir-primes:split}
  Note that $A \cong D[x,y]/(xy - a)$ since $x$ and $a$ are coprime in $D[x]$ \cite[Lemma 14.1]{FOSSUM}.
  Thus,
  \[
  A / (P A + x A) \cong D[x,y]/(P, x, xy - a) \cong D/P[y],
  \]
  and so $PA + x A$ is a prime ideal of $A$.
  Since $PA+xA$ localizes to $P D[(a/x)^{\pm 1}] + x D[(a/x)^{\pm 1}] = P D[(a/x)^{\pm 1}]$, it has height one.
  Similarly, the ideal $P A + (a/x) A$ is a height-one prime ideal of $A$.

  The two ideals $PA + xA$ and $PA + (a/x)A$ are distinct since their localizations in the two charts are different.
  In addition, these are the only two height-one prime ideals of $A$ over $P$, since the Laurent polynomial rings $D[x^{\pm 1}]$ and $D[(a/x)^{\pm 1}]$ have no further height-one primes over $P$.

  \ref{1-flir-primes:inert}
  Now $A/PA \cong D[x,y]/(P, xy - a) \cong (D/P)[x,y]/(xy - \overline a)$ with $\overline a$ the image of $a$ in $D/P$.
  The latter ring is a domain because $x$ and $\overline{a}$ are coprime in $(D/P)[x]$ \cite[Lemma 14.1]{FOSSUM}.
  Thus, the ideal $PA$ is a prime ideal of $A$.
  Since it localizes to $P D[x^{\pm 1}]$, it has height one.
\end{proof}

\begin{lemma} \label{l:isolated-case-primes}
  The claim of \cref{p:acyclic-primes} holds for isolated cluster algebras.
\end{lemma}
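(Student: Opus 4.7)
The plan is to exploit that an isolated cluster algebra is an iterated one-variable FLIR and to classify $\htop(A)$ by layered application of \cref{l:1-flir-primes}. Since $B = 0$, each exchange polynomial
\[
  f_k = \frac{y_k}{y_k \oplus 1} + \frac{1}{y_k \oplus 1}
\]
lies in $D\bP$ itself. Setting $A_0 = D\bP$ and $A_k = A_{k-1}[x_k, f_k/x_k]$, we obtain a tower $A_0 \subseteq A_1 \subseteq \cdots \subseteq A_n = A$ in which each $A_k$ is a one-variable $A_{k-1}$-FLIR, and hence a Krull domain by \cref{p:lir-krull,c:iterated-flir}.

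For $P_0 \in \htop(D\bP)$ and $T \subseteq S_k(P_0) \coloneqq \{\, j \in [1,k] : f_j \in P_0 \,\}$, set
\[
  Q_k(P_0, T) \coloneqq P_0 A_k + \sum_{j \in T} x_j A_k + \sum_{j \in S_k(P_0) \setminus T} x_j' A_k.
\]
I will prove by induction on $k$ that $\htop(A_k)$ is the disjoint union of these ``Type A'' primes and ``Type B'' primes $Q$ with $Q \cap D\bP = 0$. The base case $k = 0$ is immediate. For the inductive step, apply \cref{l:1-flir-primes} with base ring $A_{k-1}$: each $Q' \in \htop(A_{k-1})$ either extends trivially if $f_k \notin Q'$, or splits into $Q' A_k + x_k A_k$ and $Q' A_k + x_k' A_k$ if $f_k \in Q'$; the remaining height-one primes of $A_k$ come from irreducible polynomials over the fraction field of $A_{k-1}$ and intersect $A_{k-1}$ (and hence $D\bP$) trivially. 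Because $f_k \in D\bP$, the condition $f_k \in Q'$ amounts to $f_k \in Q' \cap D\bP$, so Type B primes never split, whereas a Type A prime $Q_{k-1}(P_0, T')$ splits precisely when $f_k \in P_0$, producing exactly $Q_k(P_0, T' \cup \{k\})$ and $Q_k(P_0, T')$ with $S_k(P_0) = S_{k-1}(P_0) \cup \{k\}$.

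To conclude, no Type B prime contains any $x_i$, for $x_i \in Q$ would give $f_i = x_i x_i' \in Q \cap D\bP = 0$, contradicting $f_i \ne 0$ (which is guaranteed by the seed convention for characteristic~$2$). A Type A prime $Q_n(P_0, T)$ contains $x_i$ iff $i \in T$, which in turn requires $i \in S_n(P_0)$, equivalently $f_i \in P_0$. Finally, every $P \in \htop(D\bP[\vec x])$ containing $f_i$ is of the form $P_0 D\bP[\vec x]$ with $P_0 \in \htop(D\bP)$: otherwise $P$ would correspond to an irreducible $p \in K[\vec x]$ dividing the unit $f_i \in K^\times$, which is impossible. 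Then $PA = P_0 A$ and $\Partner(P) = S_n(P_0)$, matching the statement of \cref{p:acyclic-primes}. The main delicate point is the bookkeeping in the splitting step, to confirm that the two pieces correspond to the parameter sets $T' \cup \{k\}$ and $T'$ rather than to some other pairing.
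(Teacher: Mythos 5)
Your proposal is correct and takes essentially the same approach as the paper: write the isolated algebra as the iterated one-variable FLIR $D\bP[x_1,f_1/x_1]\cdots[x_n,f_n/x_n]$ and classify the height-one primes by repeated application of \cref{l:1-flir-primes}, using $f_i=x_ix_i'\in D\bP$ (together with \cref{l:PDE}) to rule out primes meeting $D\bP$ trivially. The paper's proof compresses this into ``inductive application'' of the same lemma; your write-up simply makes the induction and the parameter bookkeeping explicit.
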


\begin{proof}
  As shown in the proof of \cref{l:isolated-algebra}, an isolated cluster algebra $A$ is an iterated FLIR of the form
  \[
    D\bP[x_1, f_1/x_1][x_2, f_2/x_2] \cdots [x_n, f_n/x_n].
  \]
  By inductive application of \cref{l:1-flir-primes}, we obtain that the stated ideals are distinct height-one prime ideals of $A$ containing $x_i$.

  Conversely, suppose $Q \in \htop(A)$ contains $x_i$.
  Then $Q$ contains $f_i=x_i x_i' \in D\bP$, and hence $P \coloneqq Q \cap D\bP$ has height one by \cref{l:PDE}.
  Again by iterated application of \cref{l:1-flir-primes}, we obtain that $Q$ is of the stated form.
\end{proof}

For extending this result to acyclic cluster algebras, we use that an acyclic cluster algebra has a cluster cover by isolated cluster algebras in which only initial variables are frozen \cite[Proposition 4]{M14}. Thus, no mutation is necessary to obtain this cluster cover.

\begin{proof}[Proof of \cref{p:acyclic-primes}]
  Let $I$ be the set of all partners of $i$.
  Freezing $\{\, x_i : i \in [1,n] \setminus I \,\}$ yields an isolated cluster algebra $A^\dagger = D\bP^\dagger$ with seed $(\vec x^\dagger, \vec y^\dagger, B^\dagger)$ by \cref{l:partners-not-neighbors}.
  By \cite[Lemma 1]{M14}, the freezing $A^\dagger$ is a cluster localization of $A$.

  Note that upon localizing to $A^\dagger$, the partners of $i$ do not change.
  Therefore, \cref{l:isolated-case-primes} shows that the stated ideals are all distinct in $A^\dagger$, and hence also in $A$.

  It therefore remains to show that every height-one prime ideal $Q \in \htop(A)$ containing $x_i$ is of the stated form.
  Define $P' \coloneqq Q A^\dagger \cap D \bP^\dagger$.
  Since $f_i=x_i x_i' \in Q$, also $f_i \in P'$.
  \Cref{l:1-flir-primes} shows that $P'$ has height one (keeping in mind that $A^\dagger$ is a $D \bP^\dagger$-FLIR). 
  Now $P'$ first restricts to $P'' \coloneqq P' \cap D\bP[\vec x^\dagger]$, and then extends to the height-one prime ideal $P \coloneqq P'' D\bP[\vec x]$ of $D \bP[\vec x]$.
  Note that $P \subseteq Q$, since
  \[
  P'' = Q A^\dagger \cap D \bP[\vec x^\dagger] = (Q A^\dagger \cap A) \cap D \bP[\vec x^\dagger] = Q \cap D \bP[\vec x^\dagger].
  \]

  For each $j \in \Partner(P)$, we have $f_j=x_j x_j' \in P \subseteq Q$, so $x_j \in Q$ or $x_j' \in Q$.
  Let $\{i\} \subseteq T \subseteq \Partner(P)$ be such that $j \in T$ if and only if $x_j \in Q$.
  Define
  \[
  Q_0 \coloneqq P A + \sum_{j \in T} x_j A + \sum_{j \in \Partner(P) \setminus T} x_j' A.
  \]
  Then $Q_0 \subseteq Q$, and we claim equality.

  Since the seed $(\vec x, \vec y, B)$ is acyclic, the algebra $A$ has a cluster cover by isolated cluster algebras, where moreover each of the cluster localizations arises by freezing a subset of variables of the initial seed.
  This can be seen from the inductive construction in the proof of \cite[Proposition 4]{M14}.
  To show $Q_0=Q$, it therefore suffices to show $Q_0 A^\ddagger = Q A^\ddagger$ for every isolated cluster localization (using \cite[\href{https://stacks.math.columbia.edu/tag/00EO}{Lemma 00EO}]{stacks-project}).

  Fix a subset $I \subseteq [1,n]$ such that
  \[
  A^\ddagger\coloneqq A[x_{i}^\ddagger : i \in I] = A[x_i^{-1} : i \in I]
  \]
  is an isolated cluster algebra.
  The coefficient semifield of $A^\ddagger$ is $\bP^\ddagger \coloneqq \bP \oplus \Trop(x_i : i \in I)$, and the variables and coefficients corresponding to $i \in I$ have been removed from the seed $(\vec x^\ddagger, \vec y^\ddagger, B^\ddagger)$.
  
  Note that $Q A^\ddagger=A^\ddagger$ if and only if $T \cap I \ne \emptyset$.
  Then also $Q_0 A^\ddagger=A^\ddagger$.

  Now let $Q A^\ddagger$ be a proper ideal of $A^\ddagger$.
  Then $Q A^\ddagger$ is a height-one prime ideal of the isolated cluster algebra $A^\ddagger$.
  \Cref{l:isolated-case-primes} shows $Q A^\ddagger=Q_0 A^\ddagger$ (the partner set $\Partner(P) \setminus I$ may become smaller in $A^\ddagger$, but every generator of $Q A^\ddagger$ is present in $Q_0$).
\end{proof} 

\begin{example}
  A subtle point in the previous proof is the way of associating $P$ to $Q$.
  Passing to an isolated cluster localization may appear convoluted, but is indeed necessary: the more obvious approach of considering $Q \cap D[\vec x]$ does not work in general. Indeed, the prime ideal $Q \cap D[\vec x]$ may not even have height one in $D[\vec x]$.
  
  To illustrate this, consider the $A_3$-cluster algebra $A$ (over $\bZ$, without coefficients).
  The algebra $A$ is generated by $x_1$, $x_2$, $x_3$, $x_1'$, $x_2'$, $x_3'$, subject to the relations
  \[
    x_1 x_1' = x_3 x_3' = 1 + x_2 \qquad\text{and}\qquad x_2 x_2' = x_1 + x_3.
  \]
  Here $1$ and $3$ are $(1+x_2)$-partners.
  A height-one prime ideal of $A$ is $Q\coloneqq (x_1,x_3)=(x_1,x_3,1+x_2)$ (\cref{p:acyclic-primes}).
  The contraction $Q \cap \bZ[x_1,x_2,x_3]$ is the maximal ideal $(x_1,x_3,1+x_2)$, which has height $3$.
  On the other hand, freezing at $x_2$ (here $\{2\}$ is the complement of the partner set $\{1,3\}$), yields an isolated cluster algebra with coefficient ring $\bZ[x_2^{\pm 1}]$.
  Following the proof, now $P'=Q A[x_2^{-1}] \cap \bZ[x_2^{\pm 1}] = (1+x_2)$ has height one, and so does $P = (1+x_2) \bZ[x_1,x_2,x_3]$.
\end{example}

\subsection{Open Questions}

We close with two open questions.

\begin{question} \label{q:lir-flir}
  Is every LIR over a Krull domain a FLIR?
\end{question}

We currently do not know an example of a LIR that is not a FLIR.
In the one-variable case every LIR is a FLIR by \cref{p:flir-1-var}.
However, this case is too special to deduce a general conjecture.
A positive answer to \cref{q:lir-flir} would imply that every upper cluster algebra is a FLIR, and hence a Krull domain.
Such a result would already be interesting for special ground rings, such as fields or $\bZ$.

Currently, there is no known example of a non-Krull upper cluster algebra (under the obviously necessary condition that the ground ring is Krull).
A counterexample to \cref{q:lir-flir} may be easier to find, because the class of LIRs is much larger than the class of upper cluster algebras, already in the one-variable case (\cref{exm:flir-1-var}).

For cluster algebras, the following is known.
\begin{itemize}
\item The Markov cluster algebra (over $\bZ$) is not a Krull domain \cite[\S6]{GELS19}.
However, the upper cluster algebra is factorial and hence a Krull domain.

\item By an example of Speyer, upper cluster algebras need not be finitely generated \cite{speyer13}.
 Speyer's example is a FLIR, so FLIRs need not be finitely generated.
\end{itemize}

The second question concerns the computational complexity of the Banff algorithm.
Given an explicit FLIR, the complexity of our algorithms is clearly governed by multivariate polynomial factorization over the field of fractions of the ground ring.

If we work with locally acyclic cluster algebras, more typically, the input will be a matrix with the Banff property. 
The Banff algorithm then performs a sequence of mutations and freezings to obtain a cluster cover by isolated cluster algebras.
It is not clear how to obtain a reasonable upper bound on the complexity in this setting.

\begin{question}
  What is the computational complexity of the Banff algorithm \textup(assuming the input is a Banff exchange matrix, to assure termination\textup)?
\end{question}

Since it is not understood which exchange matrices have the Banff property, one should presumably deal with this question in natural subclasses, such as cluster algebras arising from surfaces.

\begin{adjustwidth}{-1cm}{-1cm}
\begin{singlespace}
  \printbibliography
\end{singlespace}
\end{adjustwidth}

\end{document}